\documentclass[11pt,a4paper]{amsart}
\usepackage[utf8]{inputenc}
\usepackage{amsmath}
\usepackage{amsfonts}
\usepackage{amssymb}
\usepackage{graphicx}
\usepackage{booktabs}
\usepackage[ruled,vlined]{algorithm2e}
\usepackage{amsaddr}
\usepackage[left=2.2cm,right=2.2cm,top=2cm,bottom=2cm]{geometry}
\usepackage{mathtools}
\usepackage{accents}
\usepackage{mathrsfs}
\usepackage{tikz}
\usepackage{caption}
\usepackage{subcaption}
\usetikzlibrary{decorations.pathreplacing}
\usetikzlibrary{fadings}

\SetCommentSty{mycommfont}

\newcommand{\D}{{\mathop{}\!\mathrm{d}}} 

\newcommand{\R}{\mathbb{R}}
\newcommand{\Q}{\mathbb{Q}}

\newcommand{\N}{\mathbb{N}}
\newcommand{\Var}{\operatorname{Var}}
\newcommand{\PP}{\mathbb{P}}

\newcommand{\E}{\mathbb{E}}

\newcommand{\T }{\mathcal{T}}
\newcommand{\A}{\mathcal{A}}
\newcommand{\one}{ 1 \hspace{-3pt} \mathrm{l}} %

\newcommand{\Aloc}{{\mathcal{A}_{\operatorname{loc}}}}
\newcommand{\aloc}{{a_{\operatorname{loc}}}}
\newcommand{\Omloc}{{\Omega_{\operatorname{loc}}}}
\newcommand{\ab}{{\mathbf{a}}}

\numberwithin{equation}{section}  
\newtheorem{defn}{Definition}[section]

\newtheorem{rem}[defn]{Remark}
\newtheorem{thm}[defn]{Theorem}
\newtheorem{prop}[defn]{Proposition}

\newtheorem{lem}[defn]{Lemma}

\newtheorem{s_asu}[defn]{Standing Assumption}

\usepackage{hyperref}
\usepackage[textwidth=3.2cm]{todonotes}
\usepackage{xcolor}
\hypersetup{
    colorlinks,
    linkcolor={red!50!black},
    citecolor={blue!50!black},
    urlcolor={blue!80!black}
}

\title[Markov Decision Processes under Model Uncertainty]{Markov Decision Processes under Model Uncertainty}

\author[A. Neufeld, J. Sester, M. \v{S}iki\'c, ]{ Ariel Neufeld$^{1}$, Julian Sester$^{2}$, Mario \v{S}iki\'c$^{3}$}

\begin{document}

\maketitle

\begin{center}
\normalsize{\today} \\ \vspace{0.5cm}
\small\textit{$^{1}$NTU Singapore, Division of Mathematical Sciences,\\ 21 Nanyang Link, Singapore 637371.\\[2mm]
$^{2}$ National University of Singapore, Department of Mathematics,\\ 21 Lower Kent Ridge Road, 119077.                                                                                                                              \\[2mm]
$^{3}$University of Z\"{u}rich, Department of Banking and Finance,\\ Plattenstr.\,14, 8032 Zürich}                                                                                                                              
\end{center}

\begin{abstract}~
We introduce a general framework for Markov decision problems under model uncertainty in a discrete-time infinite horizon setting. 
By providing a dynamic programming principle we obtain a local-to-global paradigm, namely solving a local, i.e., a one time-step robust optimization problem leads to an optimizer of the global (i.e.\ infinite time-steps) robust stochastic optimal control problem, as well as to a corresponding worst-case measure.

Moreover, we apply this  framework to portfolio optimization involving data of the $S\&P~500$. We present two different types of ambiguity sets; one is fully data-driven given by a Wasserstein-ball around the empirical measure, the second one is described by a parametric set of multivariate normal distributions, where the corresponding uncertainty sets of the parameters are estimated from the data.
It turns out that in scenarios where the market is volatile or bearish, the optimal portfolio strategies from the corresponding robust optimization problem outperforms the ones without model uncertainty, showcasing the importance of taking model uncertainty into account.\\[2mm]

\noindent
{\bf Keywords:} {Markov decision problem, Ambiguity, Dynamic programming principle, Portfolio optimization}
\end{abstract}

\section{Introduction}
Suppose that today and at all future times an agent observes the state of the surrounding world, and based on the realization of this state she decides to execute an action that may also influence future states. All actions are rewarded according to a \emph{reward function} not immediately but once the subsequent state is realized. The \emph{Markov decision problem} consists of finding at initial time a policy, i.e., a sequence of state-dependent actions, that optimizes the expected cumulated discounted future rewards, referred to as the \emph{value} of the Markov decision problem. The underlying process of states in a Markov decision problem is a stochastic process $(X_t)_{t\in \N_0}$ and is called \emph{Markov decision process}. This process is usually modelled by a discrete-time  time-homogeneous Markov process that follows a pre-specified probability which is influenced by the current state of the process and the agent's current action.
The Markov decision problem leads to an infinite horizon stochastic optimal control problem in discrete-time which finds many applications in finance and economics, compare, e.g., \cite{bauerle2011markov}, \cite{hambly2021recent}, or \cite{white1993survey} for an overview. It can, among a multitude of other applications, be used to learn the optimal structure of portfolios and the optimal trading behaviour, see, e.g.\,\cite{bertoluzzo2012reinforcement}, \cite{chang2017incorporating},  \cite{gold2003fx}, \cite{hu2019deep}, \cite{xiong2018practical}, to learn optimal hedging strategies, see, e.g.\,\cite{angiuli2022reinforcement}, \cite{angiuli2021reinforcement}, \cite{cao2021deep}, \cite{dixon2020machine}, \cite{du2020deep},  \cite{halperin2020qlbs}, \cite{li2009learning}, \cite{schal2002markov}, to { optimize inventory-production systems (\cite{uugurlu2017controlled}), or to} study socio-economic systems under the influence of climate change as in  \cite{shuvo2020markov}. 

In most applications the choice of the distribution, or more specifically the probability kernel, of the Markov decision process however is a priori unclear and hence ambiguous. For this reason, in practice, the distributions of the process often need to be estimated, compare e.g.\,\cite{aguirregabiria2002swapping}, \cite{rust1994structural}, \cite{srisuma2012semiparametric}. To account for distributional ambiguity we are therefore interested to study an optimization problem respecting uncertainty with respect to the choice of the underlying distribution of $(X_t)_{t\in \N_0}$ by identifying a policy that maximizes the expected future cumulated rewards under the worst case probability measure from an ambiguity set of admissible probability measures. This formulation allows the agent to act optimally even if adverse scenarios are realized, such as for example during financial crises or extremely volatile market periods in financial markets.

The recent works \cite{bauerle2021q}, \cite{chen2019distributionally}, \cite{uugurlu2018robust}, and \cite{xu2010distributionally} also consider infinite horizon robust stochastic optimal control problems and follow a similar paradigm but use different underlying frameworks. More precisely, \cite{chen2019distributionally} and \cite{xu2010distributionally} assume a finite action and state space. The approach from \cite{uugurlu2018robust} assumes an atomless probability space and is restricted to so called \emph{conditional risk mappings}, whereas \cite{bauerle2021q} assumes the ambiguity set of probability measures to be dominated. To the best of our knowledge, the generality of the approach presented in this paper has not been established so far in the literature.

Our general formulation enables to specify a wide range of different ambiguity sets of probability measures and associated transition kernels, given some mild technical assumptions are fulfilled. More specifically, we require the correspondence that maps a state-action pair to the set of transition probabilities to be non-empty, continuous, compact-valued, and to fulfil a linear growth condition; see Assumption~\ref{asu_p}. As we will show, these requirements are \emph{naturally} satisfied. This is for example the case if the ambiguity set is modelled by a Wasserstein-ball around a transition kernel or if parameter uncertainty with respect to multivariate normal distributions is considered.

To solve the robust optimization problem we establish a dynamic programming principle that involves only a one time-step optimization problem. Via Berge's maximum theorem (see \cite{berge}) we obtain the existence of both an optimal action and a worst case transition kernel of this \emph{local} one time-step problem. It turns out that the optimal action that solves this one time-step optimization problem determines also the \emph{global} optimal policy of the infinite time horizon robust stochastic optimal control problem by repeatedly executing this \emph{local} solution. Similarly, the \emph{global} worst case measure can be determined as a product measure given by the infinite product of the worst case transition kernel of the \emph{local} one time-step optimization problem. We refer to Theorem~\ref{thm_main_result} for our main result. This local-to-global principle is in line with similar results for non-robust Markov decision problems, compare e.g.\,\cite[Theorem 7.1.7]{bauerle2011markov}, where the optimal global policy can also be determined locally. Note that the local-to-global paradigm obtained in Theorem~\ref{thm_main_result} is noteworthy, since $(X_t)_{t\in \N_0}$ does not need to be a time-homogeneous Markov process under each measure from the ambiguity set, as the corresponding transition kernel might vary with time. However, due to the particular setting that the \emph{set} of transition probabilities is constant in time and only depends on the current state and action, and not on the whole past trajectory, we are able to derive the analogue local-to-global paradigm for Markov decision processes under model uncertainty as for the ones without model uncertainty.  

Eventually we show how the discussed robust stochastic optimal control framework can be applied to portfolio optimization with real data, which was already studied extensively in the non-robust case, for example in \cite{bauerle2009mdp}, \cite{moody1998performance}, \cite{yu2019model}, and \cite{zhang2020deep}{, and in the robust case using a mean-variance approach in \cite{blanchet2021distributionally} and \cite{pham2022portfolio}}. To that end, we show how, based on a time series of realized returns of multiple assets of the $S\&P~500$, a data-driven ambiguity set of probability measures can be derived in two cases. The first case is an entirely data-driven approach where ambiguity is described by a Wasserstein-ball around the empirical measure. In the second case a multivariate normal distribution of the considered returns is assumed while the set of parameters for the multivariate normal distribution is estimated from observed data. Hence, this approach can be considered as semi data-driven approach.
We then train neural networks to solve the (semi) data-driven robust optimization problem based on the local-to-global paradigm obtained in Theorem~\ref{thm_main_result} and compare the trading performance of the two approaches with non-robust approaches. It turns out that under adverse market scenarios both robust approaches outperform comparable non-robust approaches. 
These results emphasize the importance of taking into account model uncertainty when making decisions that rely on financial assets.

The remainder of the paper is as follows. In Section~\ref{sec_setting} we present the setting and formulate the underlying distributionally robust stochastic optimal control problem. We also present our main results that include a dynamic programming principle. In Section~\ref{sec_distributional_uncertainty} we discuss different possibilities to define ambiguity sets of probability measures and we show that these specifications meet the requirements of our setting. In Section~\ref{sec_portfolio_optimization} we { provide a numerical routine using neural networks to approximately solve the distributionally robust stochastic optimal control problem of Section~\ref{sec_main_results}. We apply this numerical method} to portfolio optimization using real financial data and compare the different ambiguity sets introduced in Section~\ref{sec_distributional_uncertainty} also with non-robust approaches. The proof of the main results is reported in Section~\ref{sec_proof_main_results}, while the proofs of the results from Section~\ref{sec_distributional_uncertainty} and \ref{sec_portfolio_optimization} can be found in Section~\ref{sec_proofs_sec3} and \ref{sec_proofs_sec4}, respectively. Finally, the appendix contains  several useful auxiliary known mathematical results.
\section{Setting, Problem Formulation, and Main Result}\label{sec_setting}
We first present the underlying setting for the considered stochastic process and then formulate an associated distributionally robust optimization problem.
\subsection{Setting}\label{subsec_setting}
We consider a closed subset $\Omloc \subseteq\R^d$, equipped with its Borel $\sigma$-field $\mathcal{F}_{\operatorname{loc}}$, which we use to define the infinite Cartesian product
\[
\Omega:=\Omloc^{{ \N_0}}=\Omloc\times \Omloc \times \cdots
\]
and the $\sigma$-field $\mathcal{F}:=\mathcal{F}_{\operatorname{loc}}\otimes \mathcal{F}_{\operatorname{loc}} \otimes \cdots$.
We denote by $\mathcal{M}_1(\Omega)$ the set of probability measures on $(\Omega, \mathcal{F})$, by $d \in \N$ the dimension of the state space, and by $m \in \N$ the dimension of the control space.

On this space, we consider an infinite horizon time-discrete stochastic process. To this end, we define on $\Omega$ the stochastic process $\left(X_{t}\right)_{t\in \N_0}$  by the canonical process $X_t(\left(\omega_0,\omega_1,\dots,\omega_t,\dots\right)):=\omega_t$ for $(\omega_0,\omega_1,\dots,\omega_t,\dots) \in \Omega$,  $t \in \N_0$.
We fix a compact set $A \subseteq \R^m$ and define the set of controls (also called actions) through
\begin{align*}
\mathcal{A}:&=\left\{\ab=(a_t)_{t \in \N_0}~\middle|~(a_t)_{t \in \N_0}: \Omega \rightarrow A;~a_t \text{ is } \sigma(X_{t})\text{-measurable} \text{ for all } t \in \N_0 \right\}\\
&=\left\{\left(a_t(X_t)\right)_{t\in \N_0}~\middle|~ a_t:\Omloc \rightarrow A \text{ Borel measurable for all } t \in \N_0 \right\}.
\end{align*}

%\begin{rem}[Compactness of $\mathcal{A}$]\label{rem_compactness_a}
%It follows from Tychonoff's theorem, and from the compactness of $A$, that $\mathcal{A}$ is compact in the product topology on $\Omega$, i.e., the topology induced by pointwise convergence on $\Omega$.
%\end{rem}

For every $k \in \N$, $X \subseteq \R^k$, and $p \in \N_0$, we define the set of continuous functions $g: X\rightarrow \R$ with polynomial growth at most of degree $p$  via
\[
C_p(X, \R):=\left\{g \in C(X,\R) ~\middle|~ \sup_{x \in X }\frac{|g(x)|}{1+\|x\|^p}< \infty\right\},
\]
where $C(X,\R)$ denotes the set of continuous functions mapping from $X$ to  $\R$ and $\| \cdot \|$ denotes the Euclidean norm on $\R^k$. We define on $C_p(\Omloc,\R)$ the norm
\begin{equation}\label{eq_defn_CP_norm}
\|g\|_{C_p}:=\sup_{x\in \Omloc} \frac{|g(x)|}{1+\|x\|^p}
\end{equation}
Moreover, recall the Wasserstein $p$ - topology  $\tau_p$ on $\mathcal{M}_1(\Omloc)$ induced by the { following} convergence: { for any $\mu \in \mathcal{M}_1(\Omloc)$ and $(\mu_n)_{n \in \N} \subseteq \mathcal{M}_1(\Omloc)$ we have}
\begin{equation}\label{eq_convergence_topology_1}
\mu_n \xrightarrow{\tau_p} \mu \text{ for } n \rightarrow \infty ~\Leftrightarrow~ \lim_{n \rightarrow \infty} \int g \D \mu_n = \int g \D \mu \text{ for all } g \in C_p(\Omloc, \R).
\end{equation}
Note that for $p=0$, the topology $\tau_0$ coincides with the topology of weak convergence.
To be able to formulate a robust optimization problem, we make use of the theory of set-valued maps, also called \emph{correspondences}, see also \cite[Chapter 17]{Aliprantis} for an extensive introduction to the topic. In the following we clarify how continuity is defined for correspondences, compare also Lemma~\ref{lem_upper_hemi} and Lemma~\ref{lem_lower_hemi}, where characterizations of upper hemicontinuity and lower hemicontinuity are provided.
\begin{defn}\label{def_hemi}
Let $\varphi:X \twoheadrightarrow Y$ be a correspondence between two topological spaces.
\begin{itemize}
\item[(i)]
$\varphi$ is called \emph{upper hemicontinuous}, if $\{x \in X~|~\varphi(x) \subseteq A\}$ is open for all open sets $A \subseteq Y$.
\item[(ii)] $\varphi$ is called \emph{lower hemicontinuous}, if 
$\{x \in X~|~ \varphi(x) \cap A \neq \emptyset\}$ is open for all open sets $A \subseteq Y$.
\item[(iii)] We say $\varphi$ is continuous, if $\varphi$ is upper and lower hemicontinuous.
\end{itemize}
\end{defn}
Moreover, for a correspondence $\varphi:X \twoheadrightarrow Y$ its graph is defined as 
\[
\operatorname{Gr}\varphi: = \left\{(x,y) \in X \times Y ~\middle|~y \in \varphi(x)\right\}.
\]

We impose the following standing assumptions\footnote{{ In this paper, for any $n \in \N$ and topological spaces $X_1,\dots,X_n$, we always endow $X=X_1\times \cdots \times X_n$ with the corresponding product topology.}} on the process $\left(X_t\right)_{t\in \N_0}$ and on the set of admissible measures, which are from now on assumed to be valid for the rest of the paper.
\begin{s_asu}[Assumptions on the set of measures]\label{asu_p}~
Fix $p \in \{0,1\}$.
\begin{itemize}
\item[(i)]
The set-valued map 
\begin{align*}
\Omloc \times A &\rightarrow(\mathcal{M}_1(\Omloc), \tau_p)\\
(x,a) &\twoheadrightarrow  \mathcal{P}(x,a)
\end{align*}
is assumed to be nonempty, compact-valued, and continuous.
\item[(ii)]
There exists $ C_P \geq 1 $ such that for all $(x,a) \in \Omloc \times A$ and $\PP \in \mathcal{P}\left(x,a\right)$ it holds
\begin{equation}\label{eq_growth_constraint_on_p}
\int_{\Omloc} \left(1+\|y\|^p\right) \PP(\D y)\leq C_P (1+\|x\|^p).
\end{equation}
\end{itemize}
\end{s_asu}
Under these assumptions we define for every $x \in \Omloc, \ab \in \mathcal{A}$ the set of admissible measures
\begin{align*}
\mathfrak{P}_{x,\ab}:=\bigg\{\delta_x \otimes \PP_0\otimes \PP_1 \otimes \cdots~\bigg|~&\text{ for all } t \in \N_0:~\PP_t:\Omloc \rightarrow \mathcal{M}_1(\Omloc) \text{ Borel-measurable, } \\ 
&\text{ and }\PP_t(\omega_t) \in \mathcal{P}\left(\omega_t,a_t(\omega_t)\right)\text{ for all } \omega_t\in \Omloc \bigg\},
\end{align*}
where the notation $\PP=\delta_x \otimes\PP_0\otimes \PP_1 \otimes\cdots \in \mathfrak{P}_{x,\ab}$ abbreviates\footnote{  We denote by $\delta_x$ the  Dirac measure centered on $x \in \R^d$, i.e., for any Borel set $A \subseteq \R^d$ we have $\delta_x(A) = 1$ if $x\in A$ and $0$ else.}
\[
\PP(B):=\int_{\Omloc}\cdots \int_{\Omloc} \cdots \one_{B}\left((\omega_t)_{t\in \N_0}\right) \cdots \PP_{t-1}(\omega_{t-1};\D\omega_t)\cdots \PP_0(\omega_0;\D\omega_1) \delta_x(\D \omega_0),\qquad B \in \mathcal{F}.
\]
\begin{rem}
To ensure that the set $\mathfrak{P}_{x,\ab}$ is nonempty, one needs to show that $\mathcal{P}$ admits a measurable selector. By Assumption~\ref{asu_p} the correspondence $\Omloc \times A \ni (x,a) \twoheadrightarrow  \mathcal{P}(x,a)$ is closed-valued and measurable. Hence, by  Kuratovski's Theorem (compare, e.g., \cite{kuratowski1930probleme} and \cite[Theorem 18.13]{Aliprantis}), there exists a measurable selector $\Omloc \times A \ni (x,a) \mapsto \PP(x,a) \in \mathcal{M}_1(\Omloc)$ such that $\PP(x,a) \in \mathcal{P}(x,a)$ for all $(x,a) \in \Omloc \times A$. Since actions are by definition measurable, we also obtain that for all $(a_t)_{t \in \N_0} \in \mathcal{A}$ and for all $t\in \N_0$ the map $\Omloc \ni \omega_t \mapsto \PP(\omega_t,a_t(\omega_t))=:\PP_t(\omega_t; \D \omega_{t+1})$ is measurable, as required. Then, the non-emptiness of $\mathfrak{P}_{x,\ab}$ follows by the Ionescu--Tulcea theorem { (compare, e.g.,  \cite[Theorem 14.32]{klenke2013probability} and \cite{ionescu_tulceau})}.
\end{rem}

%We equip $\mathfrak{P}_{x,\ab}$ with the product topology of $\prod_{i=0}^\infty \mathcal{M}_1(\Omloc)$, i.e., we have for a sequence $(\PP^{(n)})_{n \in \N}\subset \mathfrak{P}_{x,\ab}$ that $\PP^{(n)}:=\PP_0^{(n)}\otimes \PP_1^{(n)} \otimes \cdots \rightarrow \PP:=\PP_0\otimes \PP_1 \otimes \cdots  \in  \mathfrak{P}_{x,\ab}$ for $n \rightarrow \infty$ if and only if  $\PP^{(n)}_t \rightarrow \PP_t$ for all $t\in \N$ in the sense of \eqref{eq_convergence_topology_1}. 
%\begin{rem}[Compactness of $\mathfrak{P}_{x,\ab}$]\label{rem_compactness_p}
%Note that, as a consequence of Tychonoff's theorem and the compactness of $\mathcal{P}_t(\cdot,\cdot)$, the set $\mathfrak{P}_{x,\ab}$ is compact.
%\end{rem}
\subsection{Problem Formulation}
Let $r:\Omloc \times A \times \Omloc \rightarrow \R$ be some \emph{reward function}. We assume from now on that it fulfils the following assumptions.
\begin{s_asu}[Assumptions on the reward function and the discount factor]\label{asu_2}~
Let $p \in \{0,1\}$ be the number fixed in Assumption~\ref{asu_p}.
\begin{itemize}
\item[(i)] The map 
\[
\Omloc \times A \times \Omloc \ni (x_0,a,x_1) \mapsto r(x_0,a,x_1)
\]
is continuous
\item[(ii)]There exists some $L > 0$ { and moduli of continuity\footnote{{ A modulus of continuity is a function $\rho:[0,\infty] \rightarrow [0,\infty]$ satistfying $\lim_{x\rightarrow 0} \rho(x)=0=\rho(0).$}} $\rho_0:[0,\infty] \rightarrow [0,\infty]$ and $\rho_A:[0,\infty] \rightarrow [0,\infty]$ } such that for all $x_0,x_0',x_1\in \Omloc$ and $a,a'\in A$ we have
\begin{equation}\label{eq_c_Lipschitz}
\left|r(x_0,a,x_1)-r(x_0',a',x_1)\right|\leq L \cdot { (1+\|x_1\|^p) \cdot} \left({ \rho_0 \left(\|x_0-x_0'\|\right)}+{ \rho_A\left(\|a-a'\|\right)}\right).
\end{equation}
\item[(iii)] There exists some $C_r\geq 1$ such that for all $x_0,x_1\in \Omloc$ we have
\begin{equation}\label{eq_c_bounded}
|r(x_0,a,x_1)| \leq C_r(1+\|x_0\|^p+\|x_1\|^p) \text{ for all } a \in A.
\end{equation}
\item[(iv)]
We fix an associated \emph{discount factor} $\alpha<1$ which satisfies
\[
0< \alpha < { \frac{1}{C_P},}
\]
{ where $C_P\geq 1$ is the constant defined in Assumption~\ref{asu_p}~(ii).}
\end{itemize}
\end{s_asu}

\begin{rem}[{ Discussion of the assumptions}]
\begin{itemize}
\item[(i)]

Note that if $r$ is Lipschitz-continuous, i.e., if there exists some $L > 0$ such that for all $x_0,x_0',x_1,x_1'\in \Omloc$ and $a,a'\in A$ we have
\begin{equation*}
\left|r(x_0,a,x_1)-r(x_0',a',x_1')\right|\leq L \left(\|x_0-x_0'\|+\|a-a'\|+\|x_1-x_1'\|\right),
\end{equation*}
then \eqref{eq_c_Lipschitz} follows directly. Therefore the requirement of Assumption~\ref{asu_2}~~(i) and (ii) is weaker than assuming Lipschitz continuity of the reward function. In particular, if $m=d$ holds for the dimensions, then the function of the form 
\begin{equation}\label{eq_linear_reward_function}
\Omloc \times A \times \Omloc \ni (x_0,a,x_1) \mapsto r(x_0,a,x_1):= a \cdot x_1 { - \lambda \cdot a^T \cdot M \cdot a}
\end{equation}
{ for some $\lambda \geq 0$ and some $M \in \R^{m \times m }$}
fulfils the requirement imposed in \eqref{eq_c_Lipschitz} but { is} not Lipschitz continuous, unless $\Omloc$ is bounded. Compare also Section~\ref{sec_portfolio_optimization}, where we apply portfolio optimization while taking into account a reward function of the form \eqref{eq_linear_reward_function}.
\item[(ii)]{ Note that Assumption~\ref{asu_p}~(ii) and Assumptions \ref{asu_2}~(iii), (iv) are standard assumptions for contracting Markov decision processes (compare, e.g., \cite[Definition 7.1.2~(ii)]{bauerle2011markov} and \cite[Corollary 7.2.2]{bauerle2011markov}). The continuity properties required in Assumption~\ref{asu_p}~(i), Assumption~\ref{asu_2}~(i), and Assumption~\ref{asu_2}~(ii) are assumptions tailored for robust Markov decision processes to ensure that the operator $\mathcal{T}$ defined in \eqref{eq_Tv_defn_main} is a contraction even if the image of  $\mathcal{P}$ is not a singleton and non-dominated, compare also the proof of Theorem~\ref{thm_main_result}~(ii).
}
\end{itemize}
\end{rem}

Our main problem consists, for every initial value $x\in \Omloc$, in maximizing the expected value of $\sum_{t=0}^\infty \alpha^tr(X_{t},a_t,X_{t+1})$ under the worst case measure from $\mathfrak{P}_{x,\ab}$ over all possible actions $\ab \in \A$. More precisely, we introduce the value function
\begin{equation}\label{eq_robust_problem_1}
\begin{aligned}
   \Omloc \ni x \mapsto V(x):=\sup_{\ab \in \mathcal{A}}\inf_{\PP \in \mathfrak{P}_{x,\ab}} \left(\E_{\PP}\bigg[\sum_{t=0}^\infty \alpha^tr(X_{t},a_t,X_{t+1})\bigg]\right).
\end{aligned}
\end{equation}

\begin{defn}
We call $\left(X_{t}\right)_{t\in \N_0}$ a {\emph{Markov decision process under model uncertainty}} on state space $\Omloc \subseteq \R^d$ with corresponding set of transition probabilities $\mathcal{P}$, and we call the problem defined in \eqref{eq_robust_problem_1} a {\emph{Markov decision problem under model uncertainty}}.
\end{defn}
\subsection{Main Result: The Dynamic Programming Principle}\label{sec_main_results}
In this section we provide the main results of the paper which comprise a \emph{dynamic programming principle} which in particular allows to solve the optimization problem \eqref{eq_robust_problem_1} by solving a related one-step { fixed} point equation.
%
%To find solutions of the optimization problem we introduce a \emph{dynamic programming principle}. The principle relies on the idea that the optimization problem can be reduced to a single one-step optimization problem.

To this end, we define the space of \emph{one-step actions}
\[
\mathcal{A}_{\operatorname{loc}} := \left\{ a_{\operatorname{loc}}:\Omloc \rightarrow A \text{ measurable} \right\},
\]
and, we define for every $\aloc \in \Aloc$ the set of kernels
\begin{align*}
\mathbf{P}_{\aloc}:= \big\{\PP_0:\Omloc\rightarrow \mathcal{M}_1(\Omloc) \text{ measurable}~\big|~&\PP_0(x) \in \mathcal{P}\left(x,\aloc (x)\right)\text{ for all } x\in \Omloc \big\}.
\end{align*}
Moreover, we define on $C_p(\Omloc,\R)$ the operator $\mathcal{T}$ which for every $v \in C_p(\Omloc,\R)$is defined by
\begin{equation}\label{eq_Tv_defn_main}
\Omloc \ni x \mapsto \T v(x):= \sup_{a \in A} \inf_{\PP \in \mathcal{P}(x,a)} \E_{\PP} \left[r(x,a,X_1)+\alpha v(X_1)\right].
\end{equation}
Our main findings are collected in the subsequent theorem.

\begin{thm}\label{thm_main_result}
Assume that Assumption~\ref{asu_p} and Assumption~\ref{asu_2} hold true. Then the following holds.
\begin{itemize}
\item[(i)] For every $v \in C_p(\Omloc,\R)$ there exists $\PP_0^*:\Omloc \times A \rightarrow \mathcal{M}_1(\Omloc)$ such that for all $(x,a) \in \Omloc \times A$ we have $\PP_0^*(x,a) \in \mathcal{P}(x,a)$
and 
\begin{equation}
\begin{aligned}\label{eq_claim_1_thm}
\E_{\PP_0^*(x,a)}[r(x,a,X_1)+\alpha v(X_1)]:&= \int_{\Omloc} r(x,a,\omega_1)+\alpha v (\omega_1) \PP_0^*(x,a;\D\omega_1)\\
&= \inf_{\PP_0 \in \mathcal{P}(x,a)} \E_{\PP_0}[r(x,a,X_1)+\alpha v(X_1)].
\end{aligned}
\end{equation}
Moreover, there exists $\aloc^* \in \Aloc$ such that for every $x \in \Omloc$ we have
\begin{equation}
\begin{aligned}\label{eq_claim_2_thm}
&\inf_{\PP_0 \in \mathcal{P}(x,\aloc^*(x))} \E_{\PP_0}[r(x,\aloc^*(x),X_1)+\alpha v(X_1)]\\
&=\sup_{\aloc \in \Aloc} \inf_{\PP_0 \in \mathcal{P}(x,\aloc(x))} \E_{\PP_0}[r(x,\aloc(x),X_1)+\alpha v(X_1)].
\end{aligned}
\end{equation}
Furthermore, let $\PP_{\operatorname{loc}}^*: \Omloc \rightarrow \mathcal{M}_1(\Omloc)$ be defined by
\[
\PP_{\operatorname{loc}}^*(x):=\PP_0^*(x,\aloc^*(x)),\qquad x \in \Omloc.
\]
Then $\PP_{\operatorname{loc}}^* \in \mathbf{P}_{\aloc^*}$ and for every $x\in \Omloc$ it holds that
\begin{equation}\label{eq_thm_assertion_1}
\begin{aligned}
\T v(x)&=\sup_{\aloc \in \Aloc}\inf_{\PP_0 \in \mathbf{P}_{{\aloc}}}\E_{\PP_0(x)}\big[r(x,\aloc(x),X_1)+\alpha  v(X_1)\big]\\&=\inf_{\PP_0 \in \mathbf{P}_{{\aloc^*}}}\E_{\PP_0(x)}\big[r(x,\aloc^*(x),X_1)+\alpha  v(X_1)\big]\\
&=\E_{\PP_{\operatorname{loc}}^*(x)}\big[r(x,\aloc^*(x),X_1)+\alpha  v(X_1)\big].
\end{aligned}
\end{equation}
\item[(ii)] We have that $\T  \bigg(C_p(\Omloc,\R)\bigg) \subseteq C_p(\Omloc,\R)$, i.e., $\T v \in C_p(\Omloc,\R)$ for all $v \in C_p(\Omloc,\R)$ and for all $v,w \in C_p(\Omloc,\R)$ the following inequality holds true
\begin{equation}\label{eq_contraction}
\left\|\T v-\T w\right\|_{C_p} \leq \alpha C_P \|v-w\|_{C_p}.
\end{equation}
In particular, there exists a unique $v \in C_p(\Omloc,\R)$ such that $\T v = v$.
Moreover, for every $v_0 \in C_p(\Omloc,\R)$ we have $v=\lim_{n \rightarrow \infty} \T ^nv_0$.
\item[(iii)] Let $v \in C_p(\Omloc,\R)$ satisfy $\T v=v$ and let $\aloc^* \in \Aloc$, $\PP_{\operatorname{loc}}^* \in \mathbf{P}_{{\aloc^*}}$ be defined as in (i).  Define 
$\ab^*:=(\aloc^*(X_0),\aloc^*(X_1),\dots)\in \mathcal{A}$
 and for all $ x \in \Omloc$, $\PP_x^*:=\delta_x \otimes \PP_{\operatorname{loc}}^* \otimes \PP_{\operatorname{loc}}^* \otimes \cdots \in \mathfrak{P}_{x,\ab^*}$. Then, for all $x \in \Omloc$ we have that
 \begin{equation}\label{eq_thm_assertion_3} 
\begin{aligned}
\E_{\PP^*_x}\bigg[\sum_{t=0}^\infty \alpha^tr(X_{t},\aloc^*(X_t),X_{t+1})\bigg]&=\inf_{\PP \in \mathfrak{P}_{x,\ab^*}}\E_{\PP}\bigg[\sum_{t=0}^\infty \alpha^tr(X_{t},\aloc^*(X_t),X_{t+1})\bigg]\\
&= V(x) \\
&=v(x).
\end{aligned}
 \end{equation}
\end{itemize}
\end{thm}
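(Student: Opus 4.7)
For part (i), the approach is to apply Berge's maximum theorem twice. Fix $v\in C_p(\Omloc,\R)$ and define
\[
F(x,a,\PP):=\int_{\Omloc}\bigl(r(x,a,y)+\alpha v(y)\bigr)\,\PP(\D y).
\]
The key preparatory step is to verify that $F$ is jointly continuous on $\Omloc\times A\times(\mathcal{M}_1(\Omloc),\tau_p)$. Continuity in $\PP$ alone is immediate from \eqref{eq_convergence_topology_1}, since $y\mapsto r(x,a,y)+\alpha v(y)$ belongs to $C_p(\Omloc,\R)$ by Assumption~\ref{asu_2}~(i), (iii). Joint continuity will combine this with \eqref{eq_c_Lipschitz} and the fact that $\PP\mapsto\int(1+\|y\|^p)\,\D\PP$ is $\tau_p$-continuous, so that $\tau_p$-convergent sequences have a uniformly bounded $p$-th moment; this is the one step I expect to require the most care. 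With $F$ continuous and $(x,a)\twoheadrightarrow\mathcal{P}(x,a)$ continuous and compact-valued by Assumption~\ref{asu_p}, Berge's maximum theorem yields that $(x,a)\mapsto\inf_{\PP\in\mathcal{P}(x,a)}F(x,a,\PP)$ is continuous and the associated argmin correspondence is nonempty, compact-valued and upper hemicontinuous. A Kuratowski--Ryll-Nardzewski measurable selection (analogous to the remark following Assumption~\ref{asu_p}) produces $\PP_0^*$. A second application of Berge to the outer $\sup_{a\in A}$, with the constant compact correspondence $x\twoheadrightarrow A$, gives the measurable maximizer $\aloc^*$; the identities \eqref{eq_claim_1_thm}--\eqref{eq_thm_assertion_1} follow.

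For part (ii), continuity of $\T v$ is already a byproduct of Berge. The growth bound $\|\T v\|_{C_p}<\infty$ follows from Assumption~\ref{asu_2}~(iii) and \eqref{eq_growth_constraint_on_p}: bounding the integrand in $F$ by $C_r(1+\|x\|^p+\|y\|^p)+\alpha\|v\|_{C_p}(1+\|y\|^p)$ and integrating against any $\PP\in\mathcal{P}(x,a)$ yields a bound proportional to $1+\|x\|^p$. For the contraction,
\[
|\T v(x)-\T w(x)|\le\alpha\sup_{a,\PP}\int|v-w|\,\D\PP\le\alpha C_P\|v-w\|_{C_p}(1+\|x\|^p);
\]
dividing by $1+\|x\|^p$ gives \eqref{eq_contraction}. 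Since $\alpha C_P<1$ by Assumption~\ref{asu_2}~(iv), Banach's fixed point theorem applied on the Banach space $(C_p(\Omloc,\R),\|\cdot\|_{C_p})$ yields the unique fixed point and the convergence $\T^n v_0\to v$.

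For part (iii), I would proceed by verification. Iterating \eqref{eq_growth_constraint_on_p} gives $\E_\PP[1+\|X_n\|^p]\le C_P^n(1+\|x\|^p)$ for every $\PP\in\mathfrak{P}_{x,\ab}$, so $\alpha^n\E_\PP[v(X_n)]\to 0$ and the infinite reward series converges absolutely because $\alpha C_P<1$. To establish $V(x)\le v(x)$, pick any $\ab\in\mathcal{A}$ and construct the worst-case candidate $\PP^{\mathrm{w}}:=\delta_x\otimes\PP_0^*(\cdot,a_0(\cdot))\otimes\PP_0^*(\cdot,a_1(\cdot))\otimes\cdots\in\mathfrak{P}_{x,\ab}$. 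The fixed-point equation $v=\T v$ gives $v(X_t)\ge\E_{\PP_0^*(X_t,a_t(X_t))}[r(X_t,a_t,X_{t+1})+\alpha v(X_{t+1})\mid X_t]$, and backward induction combined with the tower property produces
\[
v(x)\ge\E_{\PP^{\mathrm{w}}}\!\left[\sum_{t=0}^{n-1}\alpha^t r(X_t,a_t,X_{t+1})+\alpha^n v(X_n)\right].
\]
Letting $n\to\infty$ gives $v(x)\ge\inf_{\PP\in\mathfrak{P}_{x,\ab}}\E_\PP[\sum_t\alpha^t r]$, and taking the supremum over $\ab$ yields $v(x)\ge V(x)$. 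The reverse inequality uses $\ab=\ab^*$: by construction $v(X_t)=\inf_{\PP\in\mathcal{P}(X_t,\aloc^*(X_t))}\E_\PP[r+\alpha v]$, so for every $\PP\in\mathfrak{P}_{x,\ab^*}$ the analogous estimate with $\le$ holds, while $\PP_x^*$ realises equality at each step. This simultaneously delivers the three equalities in \eqref{eq_thm_assertion_3}.
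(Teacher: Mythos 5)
Your proposal is correct and follows essentially the same route as the paper: joint continuity of $(x,a,\PP)\mapsto\int (r+\alpha v)\,\D\PP$ via the modulus-of-continuity bound \eqref{eq_c_Lipschitz} together with convergence of $p$-th moments, two applications of Berge's maximum theorem plus a measurable selection for part (i), the standard $|\inf-\inf|\le\sup|\cdot-\cdot|$ contraction estimate and Banach's fixed point theorem for part (ii), and for part (iii) the finite-horizon inequality $v(x)\gtrless\E_\PP[\sum_{t<n}\alpha^t r+\alpha^n v(X_n)]$ iterated from the fixed-point equation, with the moment bound $\E_\PP[1+\|X_n\|^p]\le C_P^n(1+\|x\|^p)$ and $\alpha C_P<1$ justifying the passage $n\to\infty$. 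The paper phrases the $V\le v$ direction as a telescoping sum rather than a backward induction, but this is the same argument in closed form, so there is nothing substantive to flag.
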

 \begin{rem}
Note that the local-to-global paradigm obtained in Theorem~\ref{thm_main_result} is noteworthy, since $(X_t)_{t\in \N_0}$ does not need to be a (time homogeneous) Markov process under each $\PP \in\mathfrak{P}_{x,\ab^*}$, as the corresponding transition kernel might vary with time. However, due to the particular setting that the set of transition probabilities $(x,a) \mapsto \mathcal{P}(x,a)$ is constant in time and only depends on the current state and action, and not on the whole past trajectory, we are able to derive the analogue local-to-global paradigm for Markov decision processes under model uncertainty as for the ones without model uncertainty.  Moreover, if $(x,a) \mapsto \mathcal{P}(x,a)$ is single-valued, then $(X_t)_{t\in \N_0}$ is a Markov decision process in the classical sense, compare, e.g., \cite{bauerle2011markov}. This justifies to call $(X_t)_{t\in \N_0}$ a Markov decision process under model uncertainty on state space $\Omloc \subseteq \R^d$ with respect to $\mathcal{P}$.
Moreover, note that a posteriori, we see that $(X_t)_{t\in \N_0}$ is a time-homogeneous Markov process under the worst-case measure, and the optimal strategy only depends on the current state of the process, and not on time, as observed for classical Markov decision problems. 

 \end{rem}

\section{Capturing distributional uncertainty}\label{sec_distributional_uncertainty}
In this section we present different approaches that enable to capture uncertainty with respect to the choice of the underlying probability measure.{ We focus on ambiguity sets of probability measures which can be constructed from observed data. More precisely, in the first case we consider an entirely data-driven approach where the ambiguity set is described by a Wasserstein-ball around, for example, the empirical measure. In the second case, we follow a semi data-driven approach by considering a parametric family of distributions as ambiguity set, where the set of feasible parameters can be estimated from observed data. Finally, we also demonstrate how the two approaches can be generalized to the case where the state process $(X_t)_{t \geq 0}$ describes an autocorrelated time series}. We show that all of the presented approaches fulfil the requirements of the setting presented in Section~\ref{sec_setting}.
\subsection{Uncertainty expressed through the Wasserstein distance}\label{sec_wasserstein}
The first example involves the case when distributional uncertainty is captured through the $q$-Wasserstein-distance $W_q(\cdot,\cdot)$ for some  $q\in \N$. For any $\PP_1,\PP_2 \in \mathcal{M}_1(\Omloc)$ let $W_q(\PP_1,\PP_2)$ be defined as 
\[
W_q(\PP_1,\PP_2):=\left(\inf_{\pi \in \Pi(\PP_1,\PP_2)}\int_{\Omloc \times \Omloc} \|x-y\|^q \D \pi(x,y)\right)^{1/q},
\]
where $\|\cdot\|$ denotes the Euclidean norm on $\R^d$, and where $\Pi(\PP_1,\PP_2)$ denotes the set of joint distributions of $\PP_1$ and $\PP_2$, compare also for example \cite[Definition 6.1.]{villani2009optimal}. 

We fix some $q\in \N$ and specify $p:=0$ in Assumption~\ref{asu_p} and \ref{asu_2}. Further, we assume that there exists a continuous map
\begin{equation}\label{eq_definition_p_hat}
\begin{aligned}
\Omloc \times A &\rightarrow (\mathcal{M}_1(\Omloc),\tau_q)\\
(x,a) &\mapsto \widehat{\PP}(x,a)
\end{aligned}
\end{equation}
such that $\widehat{\PP}(x,a)$ has finite $q$-th moments for all $(x,a) \in \Omloc \times A$.  Then, we define for any $\varepsilon>0$ the set-valued map
\begin{equation}\label{eq_definition_wasserstein_ball}
\Omloc \times A  \ni (x,a) \twoheadrightarrow  \mathcal{P}(x,a):=\mathcal{B}^{(q)}_\varepsilon\left(\widehat{\PP}(x,a)\right):=\left\{\PP\in \mathcal{M}_1(\Omloc)~\middle|~W_q(\PP,\widehat{\PP}(x,a)) \leq  \varepsilon \right\},
\end{equation}
where $\mathcal{B}^{(q)}_\varepsilon\left(\widehat{\PP}(x,a)\right)$ denotes the $q$-Wasserstein-ball (or Wasserstein-ball of order $q$) with $\varepsilon$-radius and center $\widehat{\PP}(x,a)$.

\begin{prop}\label{prop_wasserstein}
Let $\Omloc \times A \ni (x,a) \mapsto \widehat{\PP}(x,a) \in  (\mathcal{M}_1(\Omloc),\tau_q)$ be continuous with finite $q$-th moments. Then, the set-valued map  $\Omloc \times A \ni (x,a) \twoheadrightarrow \mathcal{P}(x,a)$ defined as in \eqref{eq_definition_wasserstein_ball} fulfils the requirements of Assumption~\ref{asu_p} with $p=0$.
\end{prop}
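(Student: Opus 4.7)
The plan is to verify the four requirements of Assumption~\ref{asu_p} with $p=0$: non-emptiness, $\tau_0$-compactness of $\mathcal{P}(x,a)$, $\tau_0$-continuity of the correspondence, and the growth bound~\eqref{eq_growth_constraint_on_p}. Two of these are essentially immediate: non-emptiness holds because $\widehat{\PP}(x,a)\in\mathcal{B}^{(q)}_\varepsilon(\widehat{\PP}(x,a))$, and the growth bound with $p=0$ reduces to $\int 1\,\D\PP = 1$, which trivially satisfies \eqref{eq_growth_constraint_on_p} with $C_P=1$. The substantive work lies in establishing compactness and continuity with respect to the weak topology $\tau_0$.

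For compactness, I first note that every $\PP\in\mathcal{B}^{(q)}_\varepsilon(\widehat{\PP}(x,a))$ has uniformly bounded $q$-th moments: by the triangle inequality, $W_q(\PP,\delta_0)\le \varepsilon + W_q(\widehat{\PP}(x,a),\delta_0) < \infty$. Tightness via Markov's inequality and Prokhorov's theorem then yield weak relative compactness. Weak closedness follows from the lower semicontinuity of $W_q$ under weak convergence: if $\PP_n\to\PP$ weakly and $W_q(\PP_n,\widehat{\PP}(x,a))\le\varepsilon$, then $W_q(\PP,\widehat{\PP}(x,a)) \le \liminf_n W_q(\PP_n, \widehat{\PP}(x,a)) \le \varepsilon$. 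Hence $\mathcal{P}(x,a)$ is $\tau_0$-compact.

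For upper hemicontinuity I would use the sequential characterization in metric spaces (cf.\ Lemma~\ref{lem_upper_hemi}): given $(x_n,a_n)\to(x,a)$ and $\PP_n\in\mathcal{P}(x_n,a_n)$, continuity of $\widehat{\PP}$ in $\tau_q$ keeps $W_q(\widehat{\PP}(x_n,a_n),\delta_0)$ bounded in $n$, and so via the triangle inequality do the $q$-th moments of the $\PP_n$. Tightness and Prokhorov then yield a subsequential weak limit $\PP$, and the joint lower semicontinuity of $W_q$ under weak convergence (together with the weak convergence of $\widehat{\PP}(x_n,a_n)$ implied by $\tau_q$-continuity) gives $W_q(\PP,\widehat{\PP}(x,a))\le \liminf_n W_q(\PP_n,\widehat{\PP}(x_n,a_n))\le\varepsilon$, so $\PP\in\mathcal{P}(x,a)$.

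Lower hemicontinuity is the main obstacle, since in contrast to the previous step a genuine construction is required. My plan is a convex-combination trick exploiting the fact that $\nu\mapsto W_q^q(\nu,\mu)$ is convex, which follows from the observation that a convex combination of couplings of $(\nu_i,\mu)$ is itself a coupling of the mixture with $\mu$. Given $\PP\in\mathcal{P}(x,a)$ and $(x_n,a_n)\to(x,a)$, set $\delta_n := W_q(\widehat{\PP}(x_n,a_n),\widehat{\PP}(x,a))\to 0$ and, assuming $\varepsilon>0$, $t_n := 1 - (\varepsilon/(\varepsilon+\delta_n))^q\to 0$; then define
\[
\PP_n := (1-t_n)\PP + t_n\,\widehat{\PP}(x_n,a_n).
\]
Convexity of $W_q^q$ together with the triangle bound $W_q(\PP,\widehat{\PP}(x_n,a_n))\le\varepsilon+\delta_n$ gives $W_q^q(\PP_n,\widehat{\PP}(x_n,a_n)) \le (1-t_n)(\varepsilon+\delta_n)^q = \varepsilon^q$, so $\PP_n\in\mathcal{P}(x_n,a_n)$, while applying convexity once more yields $W_q^q(\PP_n,\PP) \le t_n(\varepsilon+\delta_n)^q \to 0$, so $\PP_n\to\PP$ in $\tau_q$ and a fortiori in $\tau_0$. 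The degenerate case $\varepsilon=0$ is trivial since then $\mathcal{P}(x,a)=\{\widehat{\PP}(x,a)\}$, for which $\tau_0$-continuity of the correspondence is immediate from $\tau_q$-continuity of $\widehat{\PP}$.
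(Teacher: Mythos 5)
Your proof is correct, and while the skeleton (non-emptiness, compactness, upper/lower hemicontinuity, trivial growth bound for $p=0$) matches the paper's, the substance of the continuity arguments is genuinely different. For compactness the paper simply cites an external result (Yue et al.), whereas you reprove it via uniform $q$-th moment bounds, Prokhorov, and lower semicontinuity of $W_q$ under $\tau_0$; for upper hemicontinuity the paper first traps the sequence in the $2\varepsilon$-ball around $\widehat{\PP}(x,a)$ to extract a weak limit and then needs an auxiliary convex-combination sequence $\widetilde{\PP}^{(n_k)}=(1-\delta_k)\PP^{(n_k)}+\delta_k\widehat{\PP}(x^{(n_k)},a^{(n_k)})$ to push the radius back down to $\varepsilon$ before applying one-argument lower semicontinuity, while you get the same conclusion more directly from joint lower semicontinuity of $W_q$ (or, even more elementarily, from the triangle inequality $W_q(\widehat{\PP}(x,a),\PP_n)\le W_q(\widehat{\PP}(x,a),\widehat{\PP}(x_n,a_n))+\varepsilon$ followed by one-argument lower semicontinuity). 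The most substantial divergence is in lower hemicontinuity: the paper works with the \emph{open}-ball correspondence, proves its lower hemicontinuity by a case-switching construction, identifies the $\tau_0$-closure of the open ball with the closed ball, and invokes \cite[Lemma 17.22]{Aliprantis}; your convex-combination rescaling with $t_n=1-(\varepsilon/(\varepsilon+\delta_n))^q$, resting on convexity of $\nu\mapsto W_q^q(\nu,\mu)$ (a mixture of couplings is a coupling of the mixture), handles the closed ball directly, avoids the closure step entirely, and in fact delivers the stronger conclusion that the recovering sequence converges in $\tau_q$ rather than merely in $\tau_0$. Both routes are sound; yours is self-contained and arguably cleaner, at the cost of having to justify the convexity of the transport cost, which the paper's argument does not need. (Two cosmetic points: replace $\delta_0$ by $\delta_{x_0}$ for some fixed $x_0\in\Omloc$ in case $0\notin\Omloc$, and note that the paper fixes $\varepsilon>0$, so your degenerate case is not actually needed.)
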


\subsection{Knightian uncertainty in parametric models}\label{sec_parametric}
Next, we consider a parametric approach, taking into account the so called \emph{Knightian uncertainty}, 
{ which is named after the American economist Frank Knight (see \cite{knight1921risk}) and which describes the unquantifiable risk of having chosen the wrong model to determine probabilities for future events. Following this paradigm we aim at describing a class of parametric models that model future events, but in order to take into account Knightian uncertainty and to avoid a misspecification by choosing a \emph{wrong} model, we allow for a range of possible parameters.}
 To this end, we consider a set-valued map of the form 
\begin{equation}\label{eq_defn_set_valued_theta}
\Omloc \times A \ni (x,a) \twoheadrightarrow \Theta(x,a) \subseteq \R^\mathfrak{D},\qquad  \text{ for some } \mathfrak{D} \in \N.
\end{equation}
The set $\Theta(x,a)$ refers to the set of parameters that are admissible in dependence of $(x,a) \in \Omloc \times A$. The underlying parametric probability distribution is described by 
\begin{equation}\label{eq_defn_set_valued_p_hat}
\begin{aligned}
\{(x,a,\theta) ~|~(x,a) \in \Omloc \times A,~ \theta \in \Theta(x,a) \} &\rightarrow (\mathcal{M}_1(\Omloc), \tau_p)\\
(x,a,\theta) &\mapsto \widehat{\PP}(x,a,\theta),
\end{aligned}
\end{equation}
which enables us to define the ambiguity set of probability measures by
\begin{equation}\label{eq_defn_set_valued_P}
\Omloc \times A \ni (x,a) \twoheadrightarrow \mathcal{P}(x,a):=\left\{\widehat{\PP}(x,a,\theta)~\middle|~\theta \in \Theta(x,a) \right\}\subseteq (\mathcal{M}_1(\Omloc), \tau_p).
\end{equation}
\begin{prop}\label{prop_knightian_1}
Let $(x,a) \twoheadrightarrow \Theta(x,a)$, as defined in \eqref{eq_defn_set_valued_theta}, be nonempty, compact-valued, and continuous, let $(x,a,\theta) \mapsto \widehat{\PP}(x,a,\theta)$, as defined in \eqref{eq_defn_set_valued_p_hat}, be continuous. Then $(x,a) \twoheadrightarrow \mathcal{P}(x,a)$, as defined in \eqref{eq_defn_set_valued_P}, is nonempty, compact-valued, and continuous.
\end{prop}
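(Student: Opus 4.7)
The plan is to split the claim into its three parts (nonempty, compact-valued, continuous) and to verify each by pushing forward the corresponding property of $\Theta$ through the continuous map $\widehat{\PP}$. Nonemptiness is immediate: since $\Theta(x,a) \neq \emptyset$, any choice of $\theta \in \Theta(x,a)$ provides an element $\widehat{\PP}(x,a,\theta) \in \mathcal{P}(x,a)$. For compact-valuedness, note that $\mathcal{P}(x,a) = \widehat{\PP}\big(\{(x,a)\} \times \Theta(x,a)\big)$ is the image of a compact set (compactness of $\Theta(x,a)$ is assumed, and a point is compact) under a continuous map, hence compact.

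For continuity, I would invoke the sequential characterizations of upper and lower hemicontinuity recorded in Lemma~\ref{lem_upper_hemi} and Lemma~\ref{lem_lower_hemi}; these apply because $\Omloc \subseteq \R^d$ is closed, so it is Polish, and consequently $(\mathcal{M}_1(\Omloc),\tau_p)$ is metrizable for $p \in \{0,1\}$ (the weak topology for $p=0$, and a Wasserstein-type metric on measures with finite $p$-th moment for $p=1$). Upper hemicontinuity of $\mathcal{P}$ will then follow by the following scheme: given $(x_n,a_n)\to(x,a)$ and $\mu_n \in \mathcal{P}(x_n,a_n)$, write $\mu_n = \widehat{\PP}(x_n,a_n,\theta_n)$ with $\theta_n \in \Theta(x_n,a_n)$; the sequential characterization of upper hemicontinuity combined with compact-valuedness of $\Theta$ furnishes a subsequence $(\theta_{n_k})$ with $\theta_{n_k} \to \theta \in \Theta(x,a)$, and continuity of $\widehat{\PP}$ then gives $\mu_{n_k} \to \widehat{\PP}(x,a,\theta) \in \mathcal{P}(x,a)$. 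For lower hemicontinuity, fix $(x_n,a_n)\to(x,a)$ and $\mu = \widehat{\PP}(x,a,\theta) \in \mathcal{P}(x,a)$; lower hemicontinuity of $\Theta$ yields a sequence $\theta_n \in \Theta(x_n,a_n)$ with $\theta_n \to \theta$, and setting $\mu_n := \widehat{\PP}(x_n,a_n,\theta_n) \in \mathcal{P}(x_n,a_n)$, continuity of $\widehat{\PP}$ produces $\mu_n \to \mu$ as required.

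The mild obstacle is twofold and essentially technical rather than substantive: first, verifying that the sequential characterization of upper hemicontinuity is applicable here, which requires metrizability of both the codomain $(\mathcal{M}_1(\Omloc),\tau_p)$ and of $\R^\mathfrak{D}$ near $\Theta(x,a)$ — the former is secured by the Polish structure of $\Omloc$ noted above, the latter is automatic; and second, ensuring that the selection $\mu_n = \widehat{\PP}(x_n,a_n,\theta_n)$ can actually be made, which here is trivial because $\widehat{\PP}$ is defined as a single-valued map on $\{(x,a,\theta) : \theta \in \Theta(x,a)\}$. Once these points are clear, the proof is essentially a mechanical transport of the hemicontinuity properties of $\Theta$ through the continuous map $\widehat{\PP}$, in the spirit of the standard result that the composition of a continuous function with a continuous correspondence is again continuous.
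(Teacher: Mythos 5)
Your proposal is correct and follows essentially the same route as the paper's proof: transport nonemptiness, compactness, and both hemicontinuity properties of $\Theta$ through the continuous map $\widehat{\PP}$, using the sequential characterizations in Lemma~\ref{lem_upper_hemi} and Lemma~\ref{lem_lower_hemi}. The only cosmetic differences are that you obtain compact-valuedness as the continuous image of a compact set rather than by the paper's explicit subsequence extraction, and that in the lower-hemicontinuity step Lemma~\ref{lem_lower_hemi} technically furnishes only a subsequence $\theta^{(k)} \in \Theta(x^{(n_k)},a^{(n_k)})$ rather than a full sequence — which is all that is needed to conclude.
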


\subsection{Uncertainty in autocorrelated time series}\label{sec_uncertainty_autocorr}

Next, we consider the case where the state process $(X_t)_{t\in \N_0}$ is given by an autocorrelated time series. More precisely, we assume that at time $t\in \N_0$ the past $m \in \N$ observations $(Y_{t-m+1},\dots,Y_t)$ of a time series $(Y_{t})_{t \in \{-m,\dots-1,0,1\dots\}}$ may have an influence on the next value of the state process. In this case we have for all $t\in \N$ a representation of the form
\[
X_t:=(Y_{t-m+1},\dots,Y_t) \in \Omloc:={ Z}^m \subseteq \R^{{D}\cdot m},\text{ with } { Z} \subseteq \R^{{D}} \text{ closed, for some } {D} \in \N.
\]
To define the ambiguity set of measures, we first consider a set-valued map of the form
\begin{equation}\label{eq_defn_P_tilde_autocorr}
\Omloc \times A \ni (x,a) \twoheadrightarrow \widetilde{\mathcal{P}}(x,a) \subseteq (\mathcal{M}_1({ Z}),\tau_p).
\end{equation}
We consider the projection $\Omloc \ni (x_1,\dots,x_m) \mapsto \pi((x_1,\dots,x_m)):= (x_2,\dots,x_m)\in { Z}^{m-1}$ that projects onto the last $m-1$ components,
and define a set-valued map $\mathcal{P}$, in dependence of $\widetilde{\mathcal{P}}$, by
\begin{equation}\label{eq_defn_P_autocorr}
\Omloc \times A \ni (x,a) \twoheadrightarrow \mathcal{{P}}(x,a):=\left\{\delta_{\pi(x)} \otimes \PP~\middle|~\PP \in \widetilde{\mathcal{P}}(x,a) \right\} \subset (\mathcal{M}_1(\Omloc),\tau_p).
\end{equation}
This means, by considering ${\mathcal{P}}$, we take into account uncertainty with respect to the evolution of the next value of the time series. However, we do not want to consider uncertainty with respect to the $m-1$ preceding values of the time series, as they constitute of the already observed realizations.

\begin{prop}\label{prop_auto_correlation}
Let $(x,a) \twoheadrightarrow \widetilde{\mathcal{P}}(x,a) $, as defined in \eqref{eq_defn_P_tilde_autocorr}, be nonempty, compact-valued, and continuous. Then $(x,a) \twoheadrightarrow \mathcal{{P}}(x,a)$, as defined in \eqref{eq_defn_P_autocorr} is nonempty, compact-valued, and continuous.
\end{prop}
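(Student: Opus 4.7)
My plan is to reduce everything to the continuity, in the $\tau_p$-topologies, of the pairing map
\[
\Phi:\Omloc\times \mathcal{M}_1(Z)\to \mathcal{M}_1(\Omloc),\qquad (x,\PP)\mapsto \delta_{\pi(x)}\otimes\PP,
\]
and then bootstrap the three required properties of $\mathcal{P}$ from the analogous hypotheses on $\widetilde{\mathcal{P}}$. Nonemptiness is immediate: any $\PP\in\widetilde{\mathcal{P}}(x,a)$ produces $\Phi(x,\PP)=\delta_{\pi(x)}\otimes\PP\in\mathcal{P}(x,a)$.

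For the continuity of $\Phi$, I would take sequences $x_n\to x$ in $\Omloc$ and $\PP_n\to \PP$ in $\tau_p$, fix an arbitrary test function $f\in C_p(\Omloc,\R)$, and decompose
\[
\int f(\pi(x_n),y)\,\PP_n(\D y)-\int f(\pi(x),y)\,\PP(\D y)=I_n+J_n,
\]
where $I_n:=\int[f(\pi(x_n),y)-f(\pi(x),y)]\,\PP_n(\D y)$ and $J_n:=\int f(\pi(x),y)\,(\PP_n-\PP)(\D y)$. The term $J_n\to 0$ because $y\mapsto f(\pi(x),y)$ belongs to $C_p(Z,\R)$: continuity in $y$ is inherited, and the $p$-growth in $y$ is obtained by absorbing the fixed $\pi(x)$ into a constant. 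For $I_n$, note that $\{\pi(x_n):n\in\N\}\cup\{\pi(x)\}$ is contained in a compact set $\widetilde{C}\subseteq Z^{m-1}$, so on every ball $K_M:=\{y\in Z:\|y\|\le M\}$ the restriction of $f$ to $\widetilde{C}\times K_M$ is uniformly continuous, giving $\sup_{y\in K_M}|f(\pi(x_n),y)-f(\pi(x),y)|\to 0$. On the complement $\{\|y\|>M\}$, the polynomial growth of $f$ combined with the boundedness of $\pi(x_n)$ yields a uniform estimate $|f(\pi(x_n),y)-f(\pi(x),y)|\le C(1+\|y\|^p)$; since $(1+\|\cdot\|^p)\in C_p(Z,\R)$, the $\tau_p$-convergence $\PP_n\to\PP$ forces $\sup_n\int_{\{\|y\|>M\}}(1+\|y\|^p)\,\PP_n(\D y)\to 0$ as $M\to\infty$. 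Sending $n\to\infty$ first and then $M\to\infty$ shows $I_n\to 0$, establishing continuity of $\Phi$.

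With continuity of $\Phi$ in hand, compact-valuedness is immediate because, for each fixed $(x,a)$, the set $\mathcal{P}(x,a)=\Phi(x,\widetilde{\mathcal{P}}(x,a))$ is the image of a compact set under the continuous map $\PP\mapsto\Phi(x,\PP)$. For continuity of $(x,a)\twoheadrightarrow\mathcal{P}(x,a)$ I would invoke the sequential characterizations in Lemma~\ref{lem_upper_hemi} and Lemma~\ref{lem_lower_hemi}. Upper hemicontinuity: given $(x_n,a_n)\to(x,a)$ and $\nu_n=\delta_{\pi(x_n)}\otimes\PP_n\in\mathcal{P}(x_n,a_n)$, upper hemicontinuity together with compact-valuedness of $\widetilde{\mathcal{P}}$ yields a subsequence $\PP_{n_k}\to\PP\in\widetilde{\mathcal{P}}(x,a)$, and $\Phi$-continuity gives $\nu_{n_k}\to\delta_{\pi(x)}\otimes\PP\in\mathcal{P}(x,a)$. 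Lower hemicontinuity: given $\nu=\delta_{\pi(x)}\otimes\PP\in\mathcal{P}(x,a)$, lower hemicontinuity of $\widetilde{\mathcal{P}}$ produces $\PP_n\in\widetilde{\mathcal{P}}(x_n,a_n)$ with $\PP_n\to\PP$, and then $\nu_n:=\Phi(x_n,\PP_n)\to\nu$, again by $\Phi$-continuity. The main technical obstacle is the moving-test-function step in controlling $I_n$: since both $f(\pi(x_n),\cdot)$ and $\PP_n$ vary simultaneously with $n$, no direct appeal to continuous-mapping or portmanteau suffices, and one must explicitly marry the local uniform continuity of $f$ with the uniform moment control afforded by $\tau_p$-convergence.
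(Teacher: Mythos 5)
Your proposal is correct and follows the same architecture as the paper: the paper also isolates the continuity of the pairing map (its Lemma~\ref{lem_continuity_dirac_plus_measure}, stated for $(\delta_x,\PP)\mapsto\delta_x\otimes\PP$ rather than $(x,\PP)\mapsto\delta_{\pi(x)}\otimes\PP$, which is equivalent since $W_q(\delta_{x_n},\delta_x)=\|x_n-x\|$), and then derives nonemptiness, compactness, and the two hemicontinuities exactly as you do, via the sequential characterizations of Lemmas~\ref{lem_upper_hemi} and~\ref{lem_lower_hemi}. The only genuine divergence is inside the key lemma. The paper uses the same $I_n+J_n$ decomposition but controls $I_n$ by restricting to Lipschitz test functions $f$, so that $|I_n|\le L\|x^{(n)}-x\|$ with no truncation; this suffices for $\tau_0$-convergence by the Lipschitz characterization of weak convergence, and the $p=1$ case is then upgraded separately by verifying convergence of first moments and invoking \cite[Theorem 6.9]{villani2009optimal}. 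You instead work with arbitrary $f\in C_p$ and handle $I_n$ by a compact truncation plus uniform integrability of the $p$-th moments under $\tau_p$-convergence. Both are valid; the paper's route outsources the hard part to two standard citations and treats $p=0$ and $p=1$ separately, while yours is self-contained and uniform in $p$ at the cost of the two-parameter limit ($n\to\infty$ then $M\to\infty$), whose uniform-integrability ingredient you should justify by the same Villani theorem (for $p=1$) and tightness (for $p=0$). One cosmetic imprecision: Lemma~\ref{lem_lower_hemi} only furnishes a subsequence $(x^{(n_k)},a^{(n_k)})$ with selections $\PP^{(k)}\in\widetilde{\mathcal{P}}(x^{(n_k)},a^{(n_k)})$, not a full sequence, but this is exactly what the characterization requires you to produce for $\mathcal{P}$ as well, so the argument goes through unchanged.
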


%\newpage
\section{Application to Portfolio Optimization}\label{sec_portfolio_optimization}
In this section we discuss a finance-related application of the presented robust stochastic optimal control problem of Section~\ref{sec_setting}. In particular, we compare different specifications to measure uncertainty with respect to the choice of the underlying probability measure.

\subsection*{Setting}\label{sec_portfolio_optimization_setting}

We present a setting that can be applied to the robust optimization of financial portfolios. Compare among many others also \cite{boyd2017multi}, \cite[Chapter 10]{dixon2020machine}, and \cite{filos2019reinforcement}, where alternative approaches to portfolio optimization relying on the optimal control of Markov decision processes are discussed. 
Let $D \in \N$ denote the number of assets that are taken into account for portfolio optimization.
Then, the underlying asset returns in the time period between $t-1$ and $t$ are given by
\[
\mathcal{R}_{t}:=\left(\mathcal{R}_{t}^i\right)_{i=1,\dots,D}:=\left(\frac{S_{t}^i-S_{t-1}^i}{S_{t-1}^i}\right)_{i=1,\dots,D} \in { Z} \subseteq \R^{D},\qquad t \in \{-m+1,\dots,0,1,\dots,\},
\]
where $S_{t}^i\in (0,\infty)$ denotes the time $t$-value of asset $i \in \{1,\dots,D\}$, $m \in \N$, and ${ Z} \subseteq \R^D$ closed.

To take into account the autocorrelation of the time series, we want to base our portfolio allocation decisions not only on the current portfolio allocation and the present state of the financial market, but also on the past $m\in \N$ observed returns. Thus, we consider at every time $t\in \N_0$ realized returns $\left(\mathcal{R}_{t-m+1},\cdots,\mathcal{R}_{t}\right) \in { Z}^m$. 
Then, the underlying stochastic process $(X_t)_{t\in \N_0}$ is modelled 
as
\begin{equation}\label{eq_defn_X_t_portfolio}
X_t:=\left(\mathcal{R}_{t-m+1},\cdots,\mathcal{R}_{t}\right) \in \Omloc,\qquad t\in \N_0,
\end{equation}
with  
\[
\Omloc := { Z} ^m\subseteq \R^{D \cdot m}.
\]
Next, we introduce the compact set
\[
A:= \left\{a=(a^i)_{i=1,\dots,D} \in [-C,C]^{D} \right\},
\]
for the possible values of the controls, which corresponds to the monetary investment in the $D$ stocks, where $C>0$ relates to a budget constraint when investing.
Then, we define the reward function by
\begin{equation}\label{eq_def_c_portfolio}
\Omloc \times A \times \Omloc \ni \left(X_{t},a_t,X_{t+1}\right) \mapsto r\left(X_{t},a_t,X_{t+1}\right):=\sum_{i=1}^D a_t^i\cdot \mathcal{R}_{t+1}^i{ -\lambda \cdot  \left(a_t^T \cdot \Sigma_{\mathcal{R}} \cdot a_t \right),}
\end{equation}
{ for some risk-aversion parameter $\lambda\geq 0$ and a covariance matrix $\Sigma_{\mathcal{R}} \in \R^{D \times D}$ associated to the asset returns.}
The reward function in \eqref{eq_def_c_portfolio} expresses the cumulated gain from trading in the period between $t$ and $t+1$, { where \emph{risky} positions are additionally penalized by a risk measure expressed in terms of the variance of the cumulated gain from trading  between $t$ and $t+1$  multiplied with a risk-aversion parameter $\lambda$. This approach is similar to the approaches presented in \cite[Section 4.2]{boyd2017multi} or \cite[Chapter 10, Section 5.6]{dixon2020machine}. We will specify a data-driven estimate for the covariance matrix $\Sigma_{\mathcal{R}}$ in the next subsection.}
\subsection{Data-driven ambiguity set and Wasserstein-uncertainty}\label{sec_portfolio_wasserstein}
We rely on the setting elaborated above.
% and denote the initial state by 
%\[
%x:=\left(\mathcal{R}_{-m+1},\cdots,\mathcal{R}_{0}\right)\in \Omloc.
%\]

As exposed in Section~\ref{sec_wasserstein}, we may capture distributional uncertainty by considering a Wasserstein-ball around some kernel 
\[
 \Omloc \times A \ni (x,a)\mapsto  \widehat{\PP}(x,a)\in \mathcal{M}_1({ Z} ),
 \]
 for ${ Z}  \subseteq \R^D$ closed.
We consider a time series of past realized returns 
\begin{equation}\label{eq_time_series_returns}
\left(\mathscr{R}_1,\dots,\mathscr{R}_{N} \right) \in { Z} ^N,\qquad \text{ for some } N \in \N { \cap [2,\infty)} .
\end{equation}
Compare also Figure~\ref{fig_dates}, where we illustrate the relation between this time series and the time series of future returns.

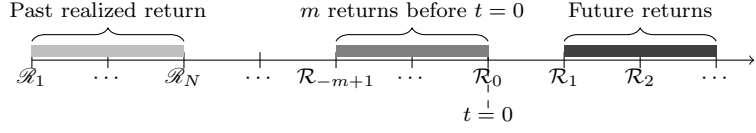
\begin{figure}[h!]
\begin{center}
\begin{tikzpicture}[%
    every node/.style={
        font=\scriptsize,
        text height=1ex,
        text depth=.25ex,
    },
]
% draw horizontal line   
\draw[->] (0,0) -- (9.5,0);

% draw vertical lines
\foreach \x in {0,1,...,9}{
    \draw (\x cm,3pt) -- (\x cm,-3pt);
}
\draw[dashed,black](6,0.1)--(6,-0.7);

% place axis labels
\node[anchor=north] at (0,0) {$\mathscr{R}_1$};
\node[anchor=north] at (1,0) {$\cdots$};
\node[anchor=north] at (2,0) {$\mathscr{R}_N$};
\node[anchor=north] at (3,0) {$\cdots$};
\node[anchor=north] at (4,0) {$\mathcal{R}_{-m+1}$};
\node[anchor=north] at (5,0) {$\cdots$};
\node[anchor=north] at (6,-0.5) {$t=0$};
\node[anchor=north] at (6,0) {$\mathcal{R}_0$};
\node[anchor=north] at (7,0) {$\mathcal{R}_1$};
\node[anchor=north] at (8,0) {$\mathcal{R}_2$};
\node[anchor=north] at (9,0) {$\cdots$};

%% draw scale above
\fill[lightgray] (0,0.05) rectangle (2,0.2);
\fill[gray] (4,0.05) rectangle (6,0.2);
\fill[darkgray] (7,0.05) rectangle (9,0.2);

% draw curly braces and add their labels
\draw[decorate,decoration={brace,amplitude=5pt}] (0,0.25) -- (2,0.25)
    node[anchor=south,midway,above=4pt] {Past realized return};
 \draw[decorate,decoration={brace,amplitude=5pt}] (4,0.25) -- (6,0.25)
    node[anchor=south,midway,above=4pt] {$m$ returns before $t=0$};
\draw[decorate,decoration={brace,amplitude=5pt}] (7,0.25) -- (9,0.25)
    node[anchor=south,midway,above=4pt] {Future returns};
\end{tikzpicture}
\end{center}
\caption{Illustration of the observed and already realized return $(\mathscr{R}_t)_{t=1,\dots,N}$ and the future random returns $(\mathcal{R}_t)_{t=-m+1, \cdots,0,1,\cdots}$.}\label{fig_dates}
\end{figure}

Relying on the time series from \eqref{eq_time_series_returns}, we aim at constructing an ambiguity set $\mathcal{P}$. To this end, we define $\widehat{\PP}$ through a sum of Dirac-measures given by\footnote{Note that $x \mapsto \widehat{\PP}(x)$ does not depend on $a \in A$.}
\begin{equation}\label{eq_defn_phat_portfolio_1}
\Omloc \ni X_t= \left(\mathcal{R}_{t-m+1},\cdots,\mathcal{R}_{t}\right) \mapsto \widehat{\PP}\left(X_t\right) (\D x):= \sum_{s=m}^{N-1}\pi_s(X_t) \cdot \delta_{\mathscr{R}_{s+1}}(\D x)\in \mathcal{M}_1({ Z}),
\end{equation}
where $\pi_s(X_t)\in [0,1]$, $s =m,\cdots,N-1$ with $\sum_{s=m}^{N-1}\pi_s(X_t) = 1$. We want to weight the distance between the past $m$ returns  before $\mathcal{R}_{t+1}$ and  the $m$ returns before $\mathscr{R}_{s+1}$, while assigning higher probabilities to more similar sequences of $m$ returns. This means, the measure $\widehat{\PP}$ relies its prediction for the next return on the best fitting sequence of $m$ consecutive returns that precede the prediction. To this end, we set for some (small) constant $\widetilde{\varepsilon}>0$\footnote{The constant $\widetilde{\varepsilon}>0$ is merely a technical requirement which is considered to avoid division by zero in the case $\operatorname{dist}_s(X_t)=0$ for some indices $s\in \{1,\dots,N\},t\in \N_0$, i.e., in the case that a sequence of $m$ random returns equals a sequence of past realized returns. Hence, in practice, $\widetilde{\varepsilon}$ can be set to be a negligible small positive real number.}
\begin{equation*}\label{eq_defn_ps_empirical}
\Omloc \ni X_t= \left(\mathcal{R}_{t-m+1},\cdots,\mathcal{R}_{t}\right)  \mapsto\pi_s(X_t):= \left(\frac{(\operatorname{dist}_s(X_t)+\widetilde{\varepsilon})^{-1}}{\sum_{\ell=m}^{N-1}(\operatorname{dist}_\ell(X_t)+\widetilde{\varepsilon})^{-1}}\right),
\end{equation*}
with
\begin{align*}
\operatorname{dist}_s(X_t):=\left\|\left(\mathscr{R}_{s-m+1},\cdots,\mathscr{R}_{s}\right)-X_t\right\|=\left\|\left(\mathscr{R}_{s-m+1},\cdots,\mathscr{R}_{s}\right)-\left(\mathcal{R}_{t-m+1},\cdots,\mathcal{R}_{t}\right)\right\|,~~ 
\end{align*}
for all $s=m,\cdots,N-1$. Then, we define for any fixed $\varepsilon>0$ and $q\in \N$ the ambiguity set of probability measures on $\mathcal{M}_1(\Omloc)$ via the set-valued map\footnote{Note that $\mathcal{P}$ does not depend on $a \in A$, and recall that  $\Omloc \ni (x_1,\dots,x_m) \mapsto \pi((x_1,\dots,x_m)):= (x_2,\dots,x_m)\in T^{m-1}$ denotes the projection onto the last $m-1$ components. }
\begin{equation}\label{eq_defn_P_portfolio_wasserstein}
\Omloc \ni x \twoheadrightarrow \mathcal{P}(x):= \left\{ \delta_{\pi(x)}\otimes \PP~\middle|~\PP\in  \mathcal{B}_{\varepsilon}^{(q)}\left(\widehat{\PP}(x)\right)\right\} \subseteq \left(\mathcal{M}_1(\Omloc), \tau_p\right)
\end{equation}
that takes into account a $q$-Wasserstein-ball around $\widehat{\PP}$ for the next future return.
{ Relying on the time series of realized returns in \eqref{eq_time_series_returns}, we estimate a covariance matrix $\Sigma_{\mathcal{R}} \in \R^{D\times D}$ by
\[
\Sigma_{\mathcal{R}}: = \frac{1}{N-1}\sum_{s=1}^N (\mathscr{R}_{s}-\operatorname{ER})\cdot (\mathscr{R}_{s}-\operatorname{ER})^T
\]
for $\operatorname{ER}:= \tfrac{1}{N} \sum_{i=1}^N \mathscr{R}_{s} \in \R^D$.
Hence, this choice for $\Sigma_{\mathcal{R}}$ specifies the reward function from \eqref{eq_def_c_portfolio}.
}
\begin{prop}\label{prop_portfolio_wasserstein} Let ${ Z}  \subseteq \R^D$ be compact, and let $p=0$. Then, the set-valued map $\mathcal{P}$, defined in \eqref{eq_defn_P_portfolio_wasserstein}, satisfies Assumption \ref{asu_p}. Moreover, the reward function $r$, defined in { \eqref{eq_def_c_portfolio}}, satisfies Assumption~\ref{asu_2}.
\end{prop}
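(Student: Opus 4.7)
The plan is to verify, component by component, that $\mathcal{P}$ satisfies Assumption~\ref{asu_p} and $r$ satisfies Assumption~\ref{asu_2} with $p=0$, leveraging the compactness of $Z$ (hence of $\Omloc=Z^m$ and of $\Omloc\times A\times \Omloc$) together with the general propositions from Section~\ref{sec_distributional_uncertainty}.

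First, for the set $\mathcal{P}$, I would start by showing that the reference kernel $x \mapsto \widehat{\PP}(x)$ defined in \eqref{eq_defn_phat_portfolio_1} is continuous from $\Omloc$ to $(\mathcal{M}_1(Z),\tau_q)$. Since $\widehat{\PP}(x)$ is a finite convex combination of Dirac measures located at fixed atoms $\mathscr{R}_{s+1}\in Z$, and since each weight $x\mapsto \pi_s(x)$ is continuous (the denominator $\operatorname{dist}_\ell(x)+\widetilde{\varepsilon}$ is bounded away from zero), the map is continuous in the weak topology. Because $Z$ is compact, the topologies $\tau_q$ and $\tau_0$ coincide on $\mathcal{M}_1(Z)$, so the same map is continuous into $(\mathcal{M}_1(Z),\tau_q)$, and all atoms have bounded support, so $\widehat{\PP}(x)$ has finite $q$-th moments.

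Second, I would invoke Proposition~\ref{prop_wasserstein} to conclude that the set-valued map $\widetilde{\mathcal{P}}(x):=\mathcal{B}^{(q)}_\varepsilon(\widehat{\PP}(x))$ is nonempty, compact-valued, and continuous from $\Omloc$ into $(\mathcal{M}_1(Z),\tau_0)$. Then Proposition~\ref{prop_auto_correlation} applied to $\widetilde{\mathcal{P}}$ yields that $\mathcal{P}(x)=\{\delta_{\pi(x)}\otimes \PP \,|\, \PP\in \widetilde{\mathcal{P}}(x)\}$ is nonempty, compact-valued, and continuous into $(\mathcal{M}_1(\Omloc),\tau_0)$, giving Assumption~\ref{asu_p}~(i). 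The growth condition \eqref{eq_growth_constraint_on_p} with $p=0$ is trivial, since for any probability measure $\PP$ one has $\int 1\, \PP(\D y)=1\leq C_P(1+1)$, say with $C_P=1$.

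Third, for the reward function $r$ from \eqref{eq_def_c_portfolio}, continuity is immediate as $r$ is a polynomial in $(a,x_1)$. The Lipschitz-type estimate \eqref{eq_c_Lipschitz} with $p=0$ simplifies considerably because $r$ is independent of $x_0$: one can simply take $\rho_0\equiv 0$, and for the action part a direct computation yields
\[
|r(x_0,a,x_1)-r(x_0,a',x_1)| \leq \|a-a'\|\cdot \|x_1\| + \lambda\cdot 2C\,\|\Sigma_{\mathcal{R}}\|_{\operatorname{op}}\,\|a-a'\|,
\]
and since $Z$ (hence $\|x_1\|$) is bounded by some $M<\infty$, the right-hand side is dominated by $L\cdot(1+\|x_1\|^0)\cdot \rho_A(\|a-a'\|)$ with $\rho_A(r):=r$ and $L$ chosen large. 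The polynomial bound \eqref{eq_c_bounded} follows from the continuity of $r$ on the compact set $\Omloc\times A\times\Omloc$, which makes $r$ bounded and hence $|r|\leq C_r(1+\|x_0\|^0+\|x_1\|^0)=3C_r$ for suitable $C_r$. The condition on $\alpha$ in Assumption~\ref{asu_2}~(iv) is an assumption the user places on the discount factor, so it is considered part of the hypotheses.

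The proof contains no genuine obstacle; the whole exercise is bookkeeping to match the specific construction of $\mathcal{P}$ and $r$ to the abstract hypotheses, and the only non-cosmetic observation is that compactness of $Z$ allows one to work entirely in the $\tau_0$ setting and to treat the Lipschitz bound on $r$ by a linear modulus.
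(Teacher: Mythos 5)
Your proposal is correct and follows essentially the same route as the paper: reduce Assumption~\ref{asu_p} to Propositions~\ref{prop_wasserstein} and~\ref{prop_auto_correlation} by checking that $x\mapsto\widehat{\PP}(x)$ is $\tau_q$-continuous with finite $q$-th moments (immediate for a finite convex combination of Dirac masses with continuous weights), note that the growth condition is vacuous for $p=0$, and verify Assumption~\ref{asu_2} by a direct Cauchy--Schwarz estimate in the action variable together with boundedness of $r$ on the compact set $\Omloc\times A\times\Omloc$. The only cosmetic deviations are that you invoke the coincidence of $\tau_q$ and $\tau_0$ on $\mathcal{M}_1(Z)$ for compact $Z$ where the paper tests convergence directly against $g\in C_q(Z,\R)$, and that you use the operator norm of $\Sigma_{\mathcal{R}}$ where the paper uses the Frobenius norm; neither affects the argument.
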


\subsection{Parametric Uncertainty}\label{sec_portfolio_knightian}

Next, we introduce a parametric approach in which we assume that the asset returns follow a multivariate normal distribution with unknown parameters.\footnote{
We say that $X \in \R^D$ has a $D$-dimensional multivariate normal distribution with mean $\mu \in \R^D$ and covariance matrix $\Sigma \in \R^{D\times D}$ which is symmetric and positive semidefinite if the characteristic function of $X$ is of the form $\R^D \ni u \mapsto \varphi_X(u):= \exp\left(i u^T \mu- \tfrac{1}{2} u^T \Sigma u\right)$, compare e.g. \cite[p. 124]{gut2009multivariate}. We write $X \sim \mathcal{N}_D(\mu, \Sigma)$.}

To this end, we build on the setting exposed in Section~\ref{sec_portfolio_optimization_setting}, where $m>1$, and where we choose ${ Z} = \R^D$, and $p=1$.
Moreover, we consider the following unbiased estimators of mean and covariance
\begin{equation}\label{eq_mean_estimator}
\begin{aligned}
\mathfrak{m}:(\R^D)^m &\rightarrow \R^D\\
x=(x_1,\dots,x_m) &\mapsto \frac{1}{m}\sum_{i=1}^m x_i,
\end{aligned}
\end{equation}
and 
\begin{equation}\label{eq_cov_estimator}
\begin{aligned}
\mathfrak{c}:(\R^D)^m &\rightarrow \R^{D\times D}\\
x=(x_1,\dots,x_m) &\mapsto \frac{1}{m-1}\sum_{i=1}^m (x_i-\mathfrak{m}(x))\cdot (x_i-\mathfrak{m}(x))^T.
\end{aligned}
\end{equation}
Let $\varepsilon>0$. To define the set of admissible parameters we consider the following set-valued maps
\begin{align*}
\Omloc \ni x &\twoheadrightarrow  \widehat{\mu}(x):=\left\{ {\mu} \in \R^D~\middle|~\|{\mu}-\mathfrak{m}(x)\|\leq \varepsilon \right\},\\
\Omloc \ni x &\twoheadrightarrow  \widehat{\Sigma}(x):=\left\{{\Sigma} \in \R^{D\times D} ~\middle|~{\Sigma} = \mathfrak{c}(y
) \text{ for some } y \in \Omloc \text{ with } \|y-x \|\leq \varepsilon \right\},\\
\Omloc \ni x &\twoheadrightarrow \Theta(x):=\left\{ ({\mu}, {\Sigma}) \in \R^D \times \R^{D\times D}~\middle|~{\mu} \in \widehat{\mu}(x), \Sigma \in \widehat{\Sigma}(x)\right\}.
\end{align*}
We define an ambiguity set related to $D$-dimensional multivariate normal distributions by
\begin{align*}
\Omloc  \ni x &\twoheadrightarrow\widetilde{\mathcal{P}}(x):=\left\{\mathcal{N}_D( {\mu},{\Sigma})~\middle|~({\mu},{\Sigma}) \in \Theta(x) \right\} \subseteq \left(\mathcal{M}_1(\R^D),\tau_1 \right).
\end{align*}
As in Section~\ref{sec_uncertainty_autocorr} we denote by $\Omloc \ni (x_1,\dots,x_m) \mapsto \pi((x_1,\dots,x_m)):= (x_2,\dots,x_m)\in \R^{D \cdot (m-1)}$ the projection onto the last $m-1$ components. 
These definitions allow us to define the ambiguity set on $\mathcal{M}_1(\Omloc)$ by\footnote{Note that $\mathcal{P}$ does not depend on $a \in A$, and recall that  $\Omloc \ni (x_1,\dots,x_m) \mapsto \pi((x_1,\dots,x_m)):= (x_2,\dots,x_m)\in R^{D(m-1)}$ denotes the projection onto the last $m-1$ components.}
\begin{equation}\label{eq_defn_P_multivariate_norm_portfolio}
\Omloc \times A \ni (x,a) \twoheadrightarrow \mathcal{P}(x,a) := \left\{ \delta_{\pi(x)}\otimes \PP~\middle|~ \PP \in \widetilde{\mathcal{P}}(x) \right\} \subseteq \left(\mathcal{M}_1(\Omloc),\tau_1 \right).
\end{equation}
This means, we consider as an ambiguity set for the next return a set of multivariate normal distributions with unknown mean and covariance, where the set of admissible means and covariances is specified by the estimators $\mathfrak{m}$ and $\mathfrak{c}$ as well as by the degree of ambiguity specified through $\varepsilon$.

\begin{prop}\label{prop_parametric}
Let ${ Z}  = \R^D$ and $p=1$. Then, the ambiguity set $\mathcal{P}$, as defined in \eqref{eq_defn_P_multivariate_norm_portfolio}, fulfils the requirements from Assumption~\ref{asu_p}. Moreover, the reward function $r$, defined in \eqref{eq_def_c_portfolio}, satisfies Assumption~\ref{asu_2}.
\end{prop}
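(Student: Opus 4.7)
I verify the two assertions in turn. For Assumption~\ref{asu_p}, the strategy is to chain Propositions~\ref{prop_knightian_1} and~\ref{prop_auto_correlation}: Proposition~\ref{prop_auto_correlation} reduces the task to showing that $\widetilde{\mathcal{P}}$ is nonempty, compact-valued, and continuous, and Proposition~\ref{prop_knightian_1} reduces it further to checking (a) that $x\twoheadrightarrow\Theta(x)$ has these three properties, and (b) that the parametrization map $(\mu,\Sigma)\mapsto\mathcal{N}_D(\mu,\Sigma)$ is continuous from its domain into $(\mathcal{M}_1(\R^D),\tau_1)$ (note that it does not depend on $x$ or $a$).

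For (a), since $\Theta(x)=\widehat{\mu}(x)\times\widehat{\Sigma}(x)$ is a Cartesian product, each factor is handled separately. The estimator $\mathfrak{m}$ from~\eqref{eq_mean_estimator} is linear hence continuous, so $\widehat{\mu}(x)$ is the closed $\varepsilon$-ball around a continuous function of $x$, which is trivially nonempty, compact-valued, and continuous. Likewise, $x\twoheadrightarrow\overline{B}_\varepsilon(x)\subseteq\Omloc$ is nonempty, compact-valued, and continuous, and the estimator $\mathfrak{c}$ from~\eqref{eq_cov_estimator} is a continuous (polynomial) map, so $\widehat{\Sigma}(x)=\mathfrak{c}(\overline{B}_\varepsilon(x))$ inherits these properties as the continuous image of a continuous compact-valued correspondence.

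Step (b), which I regard as the main technical point, requires upgrading weak convergence of Gaussians to $\tau_1$-convergence. Given $(\mu_n,\Sigma_n)\to(\mu,\Sigma)$, I realize $\mathcal{N}_D(\mu_n,\Sigma_n)$ as the law of $\mu_n+\Sigma_n^{1/2}\xi$ with $\xi\sim\mathcal{N}_D(0,I)$, using the continuous symmetric square root on positive semidefinite matrices; this gives almost-sure convergence $\mu_n+\Sigma_n^{1/2}\xi\to\mu+\Sigma^{1/2}\xi$. For any $g\in C_1(\R^D,\R)$, the uniform envelope $|g(\mu_n+\Sigma_n^{1/2}\xi)|\leq C(1+\|\mu_n\|+\|\Sigma_n^{1/2}\|\,\|\xi\|)$ is dominated by an integrable function (since $\|\mu_n\|$ and $\|\Sigma_n^{1/2}\|$ remain bounded), so dominated convergence yields the desired $\tau_1$-convergence. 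The growth bound~\ref{asu_p}(ii) with $p=1$ then follows from the same family of estimates: for $\PP=\delta_{\pi(x)}\otimes\mathcal{N}_D(\mu,\Sigma)$, Jensen's inequality gives
\[
\int\|y\|\,\PP(\D y)\leq\|\pi(x)\|+\|\mu\|+\sqrt{\operatorname{tr}(\Sigma)},
\]
and then $\|\mu\|\leq\|\mathfrak{m}(x)\|+\varepsilon\leq\|x\|+\varepsilon$ together with $\operatorname{tr}(\mathfrak{c}(y))\leq\tfrac{\|y\|^2}{m-1}\leq\tfrac{(\|x\|+\varepsilon)^2}{m-1}$ for admissible $y$ with $\|y-x\|\leq\varepsilon$ produce the required affine bound in $\|x\|$ for some $C_P\geq 1$.

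Finally, for the reward function $r(x_0,a,x_1)=a\cdot(x_1)_m-\lambda a^T\Sigma_{\mathcal{R}}a$ from~\eqref{eq_def_c_portfolio}, where $(x_1)_m\in\R^D$ denotes the last block of $x_1$: continuity is immediate, and~\eqref{eq_c_bounded} with $p=1$ follows from compactness of $A$. Since $r$ does not depend on $x_0$, I take $\rho_0\equiv 0$; the elementary estimate
\[
|r(x_0,a,x_1)-r(x_0,a',x_1)|\leq\|a-a'\|\,\|x_1\|+2\lambda\|\Sigma_{\mathcal{R}}\|\,\bigl(\sup_{\tilde a\in A}\|\tilde a\|\bigr)\,\|a-a'\|
\]
then yields~\eqref{eq_c_Lipschitz} with $\rho_A(t)=t$ and $L$ sufficiently large, while condition~\ref{asu_2}(iv) is the stated hypothesis on~$\alpha$.
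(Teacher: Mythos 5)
Your proposal is correct and follows the same overall skeleton as the paper (reduce via Propositions~\ref{prop_auto_correlation} and~\ref{prop_knightian_1}, verify the continuity of $\Theta$ and of the Gaussian parametrization, then check the growth bound and the reward assumptions), but two of the sub-steps are argued along genuinely different lines. For the continuity of $x\twoheadrightarrow\Theta(x)$, the paper works entirely with the sequential characterizations of Lemmas~\ref{lem_upper_hemi} and~\ref{lem_lower_hemi}, extracting subsequences by Bolzano--Weierstrass for upper hemicontinuity and constructing explicit convex-combination approximants $\mu^{(n)}=(1-\tfrac1n)\mu+\tfrac1n\mathfrak{m}(x^{(n)})$, $y^{(n)}=(1-\tfrac1n)y+\tfrac1n x^{(n)}$ for lower hemicontinuity; you instead exploit the product structure $\Theta(x)=\widehat\mu(x)\times\widehat\Sigma(x)$ and the standard closure properties of continuous compact-valued correspondences under continuous images, which is more modular and shorter (and legitimate, since $Z=\R^D$ makes $\overline{B}_\varepsilon(x)\subseteq\Omloc$ automatic). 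For the $\tau_1$-continuity of $(\mu,\Sigma)\mapsto\mathcal{N}_D(\mu,\Sigma)$, the paper first obtains weak convergence via characteristic functions and L\'evy's continuity theorem and then cites \cite[Example 3.8.15]{bogachev1998gaussian} to upgrade to $\tau_1$; your coupling $\mu_n+\Sigma_n^{1/2}\xi$ with the continuous PSD square root, almost-sure convergence, and dominated convergence is self-contained and directly produces convergence of integrals of all $g\in C_1(\R^D,\R)$, at the price of invoking continuity of $A\mapsto A^{1/2}$ on the PSD cone. Your trace estimate $\operatorname{tr}(\mathfrak{c}(y))\le\|y\|^2/(m-1)$ is also sharper than the paper's bound $4(\varepsilon^2+1)(1+\|x\|^2)/(m-1)$, though the paper's cruder computation yields an explicit closed-form constant $C_P$; either way the required affine bound in $\|x\|$ follows. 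The treatment of the reward function (taking $\rho_0\equiv 0$, $\rho_A(t)=t$, and Cauchy--Schwarz for the $a$-dependence) matches the paper's.
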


\subsection{{ Numerics}}\label{sec_numerics}
We present an explicit numerical algorithm that can be applied to { approximately} compute the optimal value function $V$ and to determine an optimal policy $\ab^* \in \mathcal{A}$. { We emphasize the need of an algorithm which produces approximate optimal solutions, since even the one time-step optimization problem in \eqref{eq_Tv_defn_main} leading to an optimal policy according to Theorem~\ref{thm_main_result} involves the fixed point $\mathcal{T}v=v$ which cannot be derived explicitly. Hence one cannot expect to obtain an explicit optimal policy.}
\subsubsection{Value Iteration}
Theorem~\ref{thm_main_result} directly provides an algorithm for the computation of the optimal value $V(x)$ to which we refer as the \emph{value iteration algorithm}.\\
In this algorithm we start with an arbitrary $V^{(0)} \in C_p(\Omloc,\R)$ and then compute recursively $V^{(n+1)}:=\T V^{(n)}$ for all $n \in \N_0$. According to Theorem~\ref{thm_main_result} we then have 
\begin{equation}\label{eq_value_iteration}
\lim_{n \rightarrow \infty} \T V^{(n)} = V.
\end{equation}
Compare also, e.g., \cite[Section 7]{bauerle2011markov}, where this algorithm (in a non-robust setting) is discussed in detail.
\subsubsection{Numerical Algorithm}
With Algorithm~\ref{algo_reinforcement_value_iteration} we present a pseudocode of the  methodology that can be applied to compute both the optimal value function and the optimal policy numerically. The algorithm relies on the  value iteration principle. This means we solve \eqref{eq_value_iteration} by approximating the value function $V$ through neural networks\footnote{For a general introduction to neural networks we refer the reader to \cite{lecun2015deep}, for applications of neural networks in finance see \cite{dixon2020machine}, for a proof of the universal approximation property of neural networks compare \cite{hornik1991approximation}.} and by repeatedly applying the recursion $V^{(n+1)}:=\T V^{(n)}$. Note that Algorithm~\ref{algo_reinforcement_value_iteration} approximates an optimal one-step action $\aloc^* \in \Aloc$. According to Theorem~\ref{thm_main_result} an approximation of the optimal policy can then be obtained as $\ab^*:=(\aloc^*(X_0),\aloc^*(X_1),\dots) \in \mathcal{A}$.
{ We also remark that, to approximate the minimum among all measures from the ambiguity set, we sample a finite amount $N_{\mathcal{P}}\in \N$ of measures from the ambiguity set in each iteration step. In the case that the ambiguity set is given by a Wasserstein-ball, we apply the algorithm presented in \cite[Algorithm 2]{neufeld2022detecting} ensuring that the sampled measures lie within the Wasserstein-ball around the corresponding reference measure.}

\begin{algorithm}[h!]
\SetAlgoLined
%\KwData{Data;}
\SetKwInOut{Input}{Input}
\SetKwInOut{Output}{Output}

\Input{Batch Size $B\in \N$; Hyperparameters for the neural networks;  Number of epochs ${E}$; Number of  iterations $\operatorname{Iter}_v$  for the improvement of the value function; Number of iterations  $\operatorname{Iter}_a$ for the improvement of the action function; Number of measures  $N_{\mathcal{P}}$; Number  of Monte-Carlo simulations $N_{\operatorname{MC}}$; State space $\Omloc$; Action space $A$; Reward function $r$;}
Initialize a neural network $V^0$;\\
Initialize a neural network $\aloc$;\\
\For{$\operatorname{epoch}= 1,\dots,{E}$}{
Set ${V}^{\text{epoch}}=V^{\text{epoch}-1}$ and freeze the weights of $V^{\text{epoch}-1}$;\\

\For{$\operatorname{iteration} =1,\dots, \operatorname{Iter}_a $}{
  \tcc{We maximize $\inf_{\PP_0 \in \mathcal{P}(x,\aloc(x))} \E_{\PP_0}[r(x,\aloc(x),X_1)+\alpha V^{\text{epoch}-1}(X_1)]$ with respect to $\aloc \in \Aloc$.}
Sample a batch of states $(x_i)_{i=1,\dots,B} \subseteq \Omloc$;\\
\For{$i=1,\dots,B$}{
Pick measures $\PP_1^{(i)},\dots,\PP^{(i)}_{N_{\mathcal{P}}} \in \mathcal{P}(x_i,\aloc(x_i))$;
}
Denote by $X_{1,\PP}^{(j)}$ a random variable that is sampled according to a measure $\PP\in \mathcal{M}_1(\Omloc)$ for $j=1,\dots,N_{\operatorname{MC}}$;\\
Sample $X_{1,\PP}^{(j)}$ for all $\PP\in \left\{\PP_1^{(i)},\dots,\PP^{(i)}_{N_{\mathcal{P}}}\right\}$, $j \in \{1, \dots,N_{\operatorname{MC}}\}$, $i\in \{1,\dots,B\}$;\\
Define for $i=1,\dots,B$:
\[
\widehat{\mathcal{T}V}(x_i):=\min_{\PP\in \left\{\PP_1^{(i)},\dots,\PP^{(i)}_{N_{\mathcal{P}}}\right\}} \frac{1}{N_{\operatorname{MC}}}\sum_{j=1}^{N_{\operatorname{MC}}}r\left(x_i,\aloc(x_i),X_{1,\PP}^{(j)}\right)+\alpha V^{\text{epoch}-1}\left(X_{1,\PP}^{(j)}\right);
\]

Maximize 
\[
\sum_{i=1}^B\widehat{\mathcal{T}V}(x_i)
\]
with respect to parameters from the neural network $\aloc$ (e.g.\,back-propagation with a stochastic gradient descent algorithm).
}

\For{$\operatorname{iteration} =1,\dots,\operatorname{Iter}_v$}{
  \tcc{We minimize the quadratic error between $V^{\text{epoch}}(x)$ and the approximation of  $\sup_{\aloc \in \Aloc}\inf_{\PP_0 \in \mathcal{P}(x,\aloc(x))} \E_{\PP_0}[r(x,\aloc(x),X_1)+\alpha V^{\text{epoch}-1}(X_1)]=\mathcal{T}V^{\text{epoch}-1}(x)$, that was computed in the previous step, for all states $x$ from a batch of sampled values.}
Sample Batch of states $(x_i)_{i=1,\dots,B}\subseteq \Omloc$;\\
Minimize
\[
\sum_{i=1}^B\left(V^{\text{epoch}}(x_i)-\widehat{\mathcal{T}V}(x_i)\right)^2
\]
with respect to parameters from $V^{\text{epoch}}$.
}

}
\Output{Neural network $V^{E}$ approximating the optimal value function;\\ Neural network $\aloc$ approximating the optimal one-step policy;}
 \caption{Value Iteration}\label{algo_reinforcement_value_iteration}
\end{algorithm}

\subsection{Numerical Experiments}
In the sequel we solve the portfolio optimization  problem that was discussed in Section~\ref{sec_portfolio_optimization_setting} by applying the numerical method based on Theorem~\ref{thm_main_result} that is elaborated in Appendix~\ref{sec_numerics} to real financial data. In particular, we compare the different approaches to capture distributional uncertainty outlined in Section~\ref{sec_portfolio_wasserstein} and \ref{sec_portfolio_knightian}, respectively, and evaluate how these approaches perform under different market scenarios.

\subsubsection{Implementation}\label{sec_implementation}
To apply  the numerical method from Appendix~\ref{sec_numerics}, we use the following hyperparameters: 
Number of measures $N_{\mathcal{P}}= 10$; Batch size $B= 2^{8}$; Monte-Carlo sample                                    size $N_{\operatorname{MC}}=2^3$; Discount factor $\alpha = 0.45$; Number of epochs $E=50$; number of iterations for $a$: $\operatorname{Iter}_a=10$; number of iterations for $v$: $\operatorname{Iter}_v=10$. The neural networks that approximate $a$ and $v$ constitute of $2$ layers with $128$ neurons each possessing \emph{ReLu} activation functions in each layer, except for the output layer of $a$ which possesses a \emph{tanh} activation function in order to constraint the output. The learning rate used to optimize the networks $a$ and $v$ when applying the \emph{Adam} optimizer (\cite{kingma2014adam}) is $0.001$. Further details of the implementation can be found under \href{https://github.com/juliansester/Robust-Portfolio-Optimization}{https://github.com/juliansester/Robust-Portfolio-Optimization}.

\subsubsection{Data}
To train and test the performance of the portfolio optimization approach, we consider the price evolution of $d=5$ constituents\footnote{The constituents are Apple, Microsoft, Google, Ebay, and Amazon.} of the $S\&P~500$ between $2010$ and $2021$. We consider a \emph{lookback period} of $m=10$ days, i.e., the prediction of the optimal trading execution relies on the previously realized $m=10$ returns. { This choice of $m$ is in line with empirical findings showing that most of the autocorrelation of daily stock returns is contained in the past $10$ stock returns (compare, e.g., \cite[Table 3.1]{ding1993long}). Moreover, $m=10$ also provides a numerically tractable number of returns that can be taken into account without unnecessarily bloating the state space.} We split the data into a training period ranging from January $2010$ until September $2018$, and three different testing periods thereafter. The normalized evolution of the asset values is depicted in Figure~\ref{fig_train_test_split}.
\begin{center}
\begin{figure}[h!]
\includegraphics[scale=0.5]{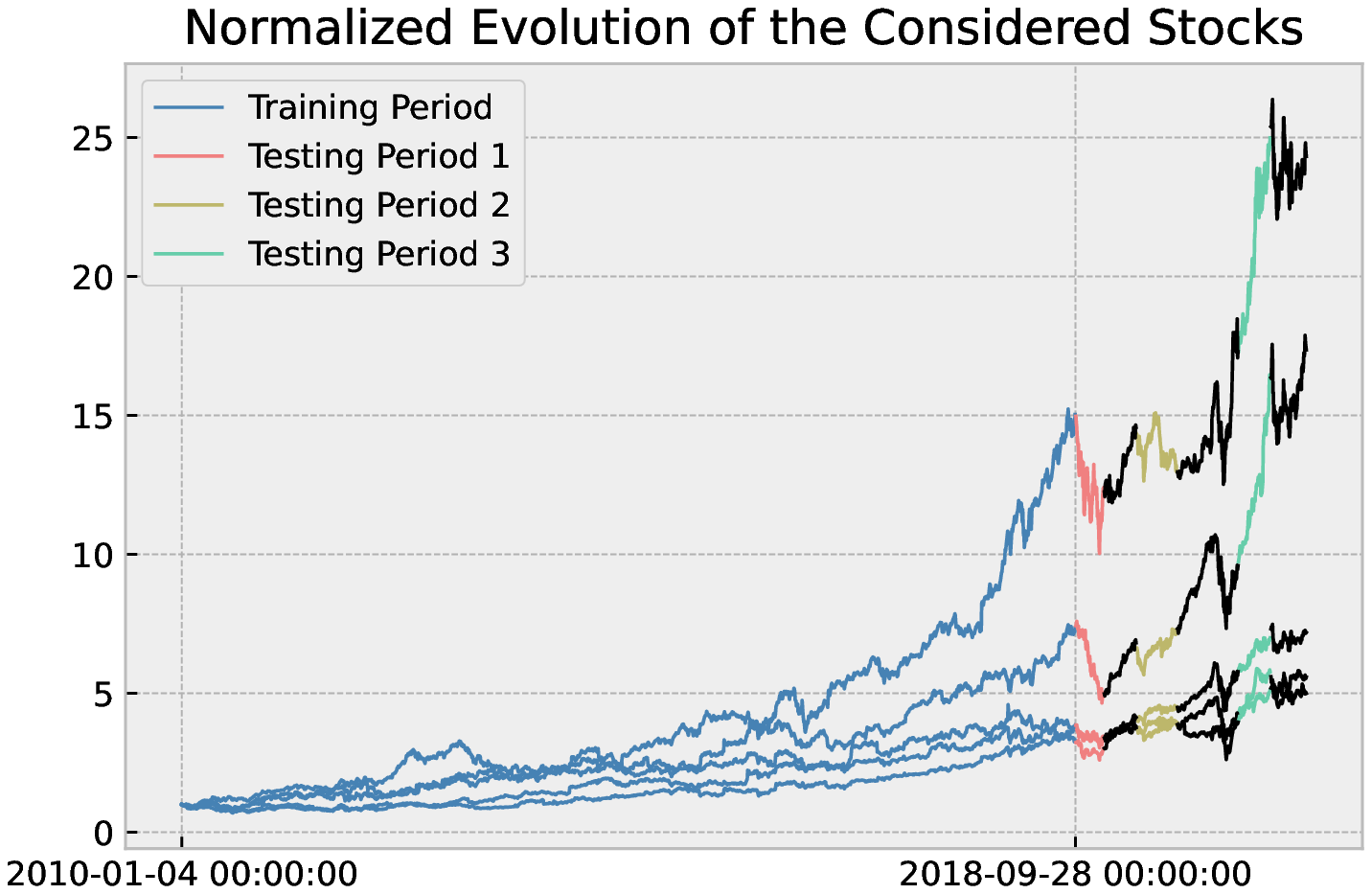}
\includegraphics[scale=0.5]{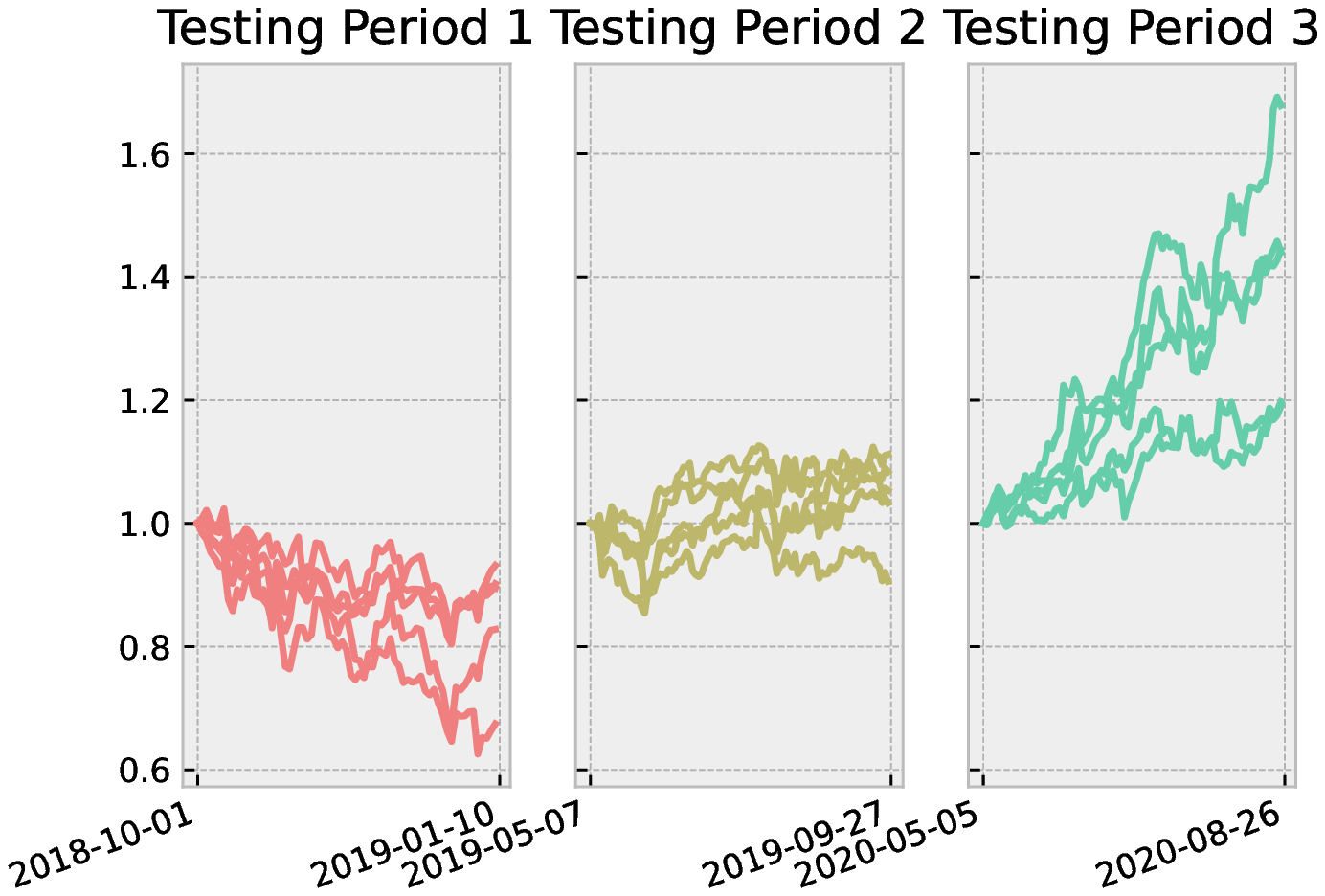}
\caption{The left panel of the figure shows the normalized evolution (with initial value $= 1$) of the stocks of $d=5$ constituents of the $S\&P~500$. Further, we divide the data into a training phase (blue) and three testing periods thereafter that are highlighted with different colors.\\
The right panel of the figure shows the normalized evolution (with initial value $= 1$ for each of the assets) of the considered stocks in the three testing periods.}\label{fig_train_test_split}
\end{figure}
\end{center}
The testing periods are illustrated in detail in the right panel of Figure~\ref{fig_train_test_split}, and they comprise three different market scenarios. While the first testing period covers an overall declining market phase, the second testing period is a volatile period without a clear trend. Eventually, the third period is a bullish market with a strong upward trend.

\subsubsection{Results}

{ We consider the reward function from \eqref{eq_def_c_portfolio} with the particular choices $\lambda =0$ and $\lambda = \tfrac{1}{2}$, resprectively. We recall that choosing $\lambda>0$ penalizes a large variance of the cumulated one-period gains. We follow the discussion from \cite[Section 4.2]{boyd2017multi} and choose in the following $\lambda =\tfrac{1}{2}$ and compare the results with the case $\lambda = 0$. Note that the parameter $\lambda$ could be adjusted for more risk-seeking or more risk-averse agents.} 
In Table~\ref{tbl_period_1}, \ref{tbl_period_1_lam12},~\ref{tbl_period_2}, \ref{tbl_period_2_lam_half}, \ref{tbl_period_3} and ~\ref{tbl_period_3_lam_half}, respectively, we depict the results of the numerical method from { Section}~\ref{sec_numerics} applied to the presented data in the three testing periods with different radii $\varepsilon$ for both of the considered approaches to define ambiguity sets, see Section~\ref{sec_portfolio_wasserstein} and \ref{sec_portfolio_knightian}. Note also that we consider different ranges of values for $\epsilon$ for the two considered approaches. In Figure~\ref{fig_trades_period_1},~\ref{fig_trades_period_2}, and ~\ref{fig_trades_period_3}, 
%respectively,
 we depict the cumulated trading profits of the trained strategies in the respective 
testing periods.
\newpage
\begin{center}
\textbf{Testing Period 1}
\end{center}

\begin{center}
\begin{figure}[h!]
\begin{subfigure}[b]{0.9\textwidth}
\includegraphics[scale=0.5]{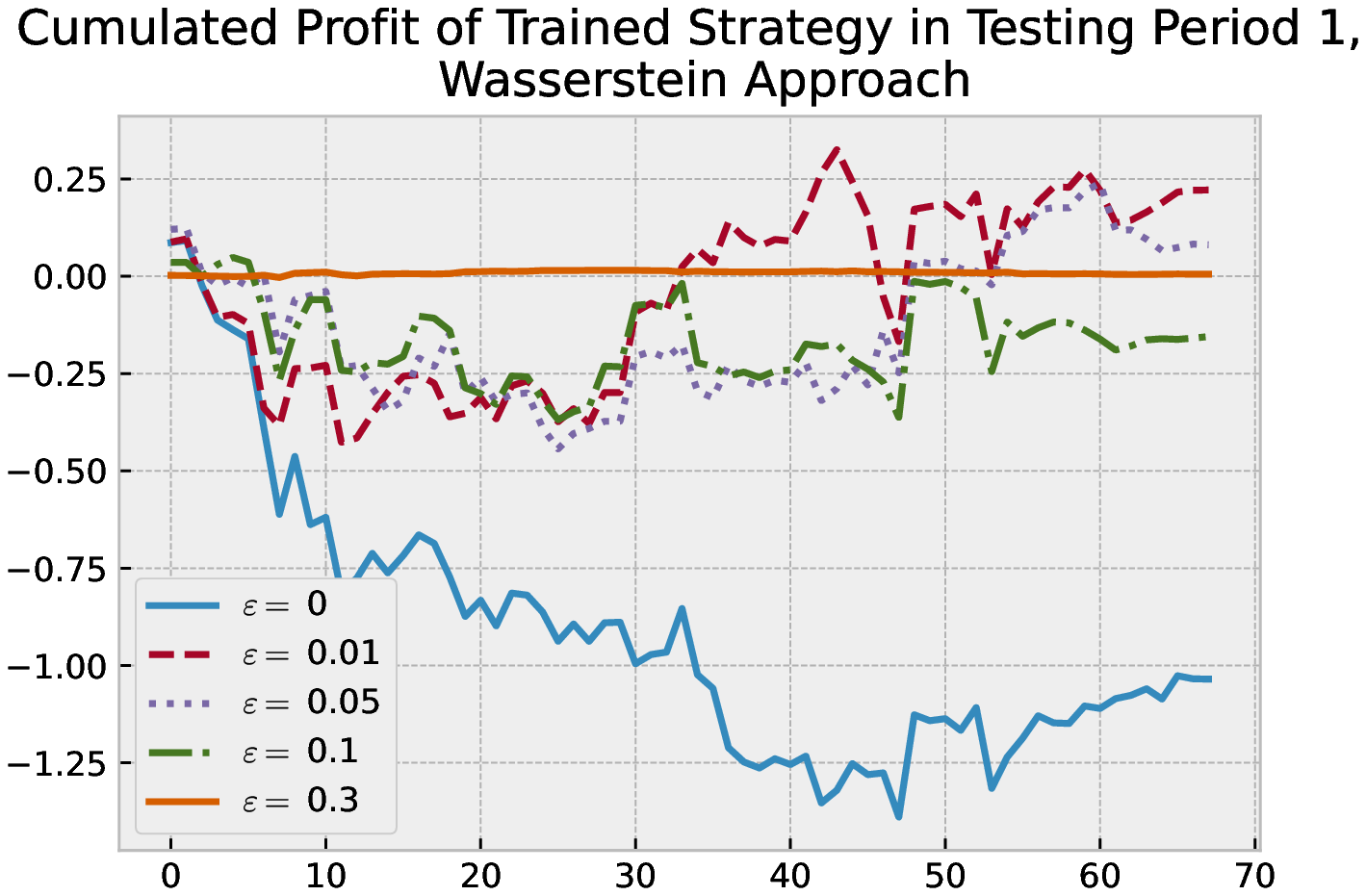}
\includegraphics[scale=0.5]{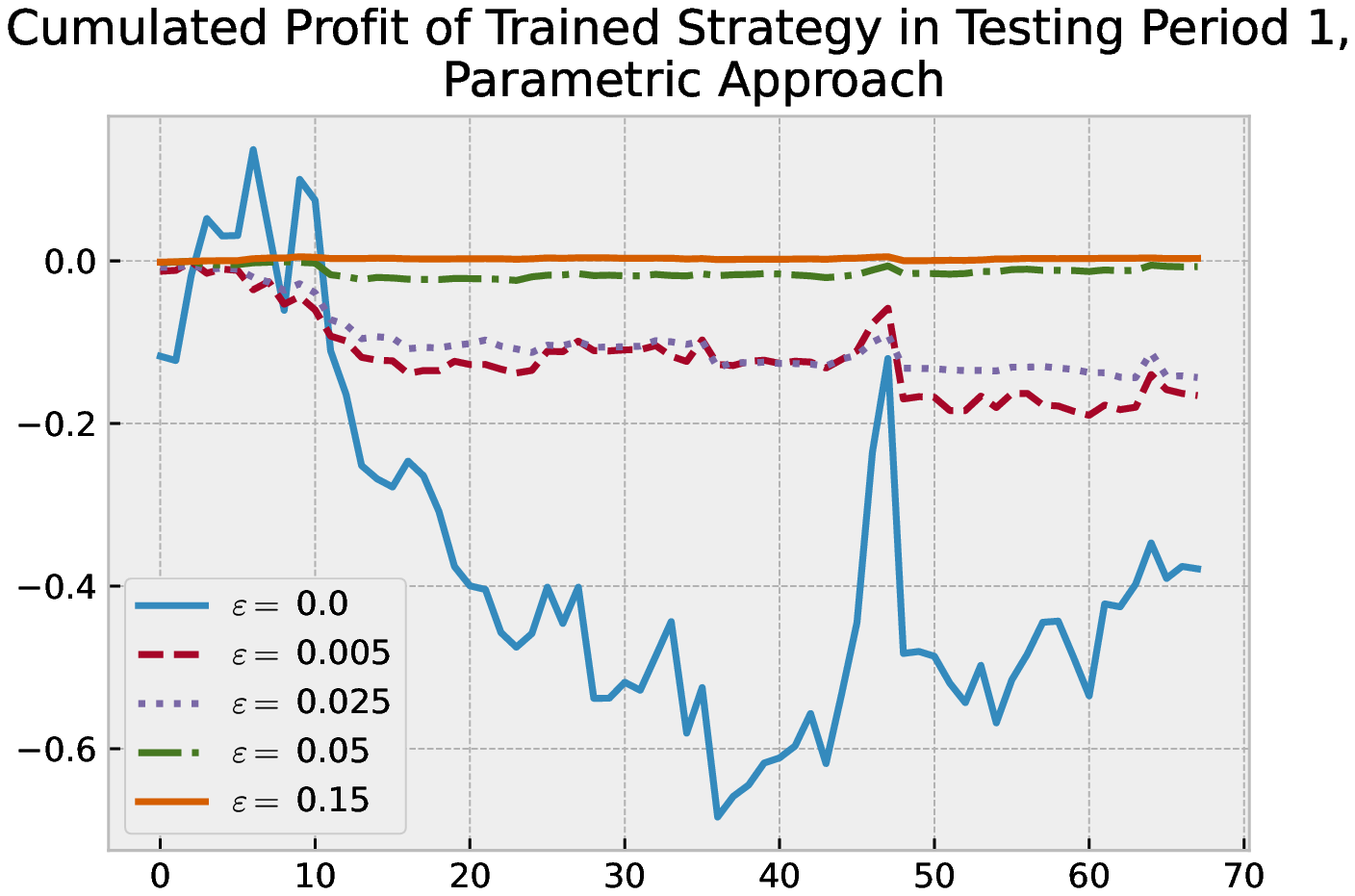}
         \caption{$\lambda = 0$}
         \label{fig_trades_period_1_lam_0}
 \end{subfigure}
 \end{figure}

 \begin{figure}[h!]
  \ContinuedFloat
\begin{subfigure}[b]{0.9\textwidth}
\includegraphics[scale=0.5]{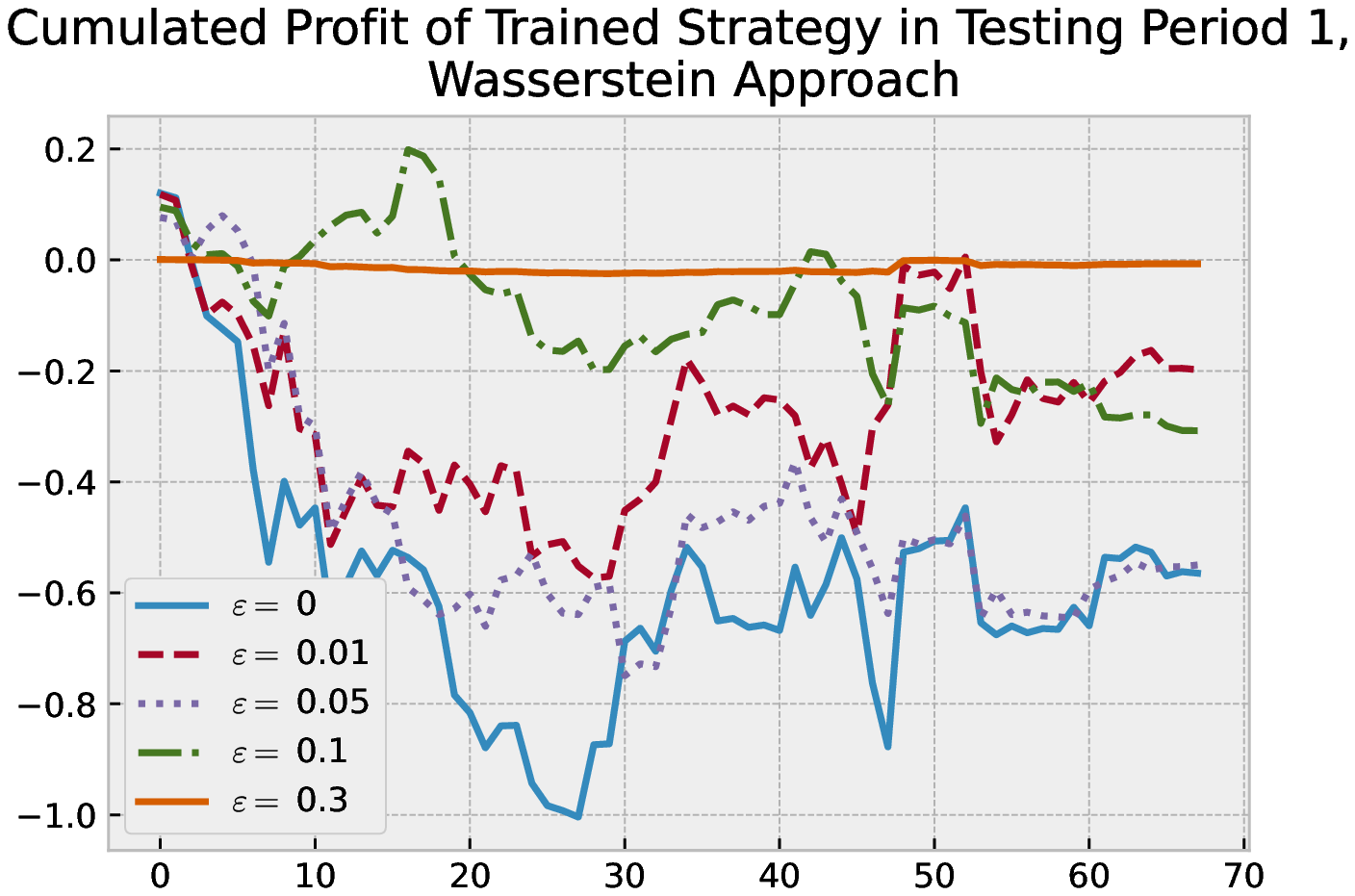}
\includegraphics[scale=0.5]{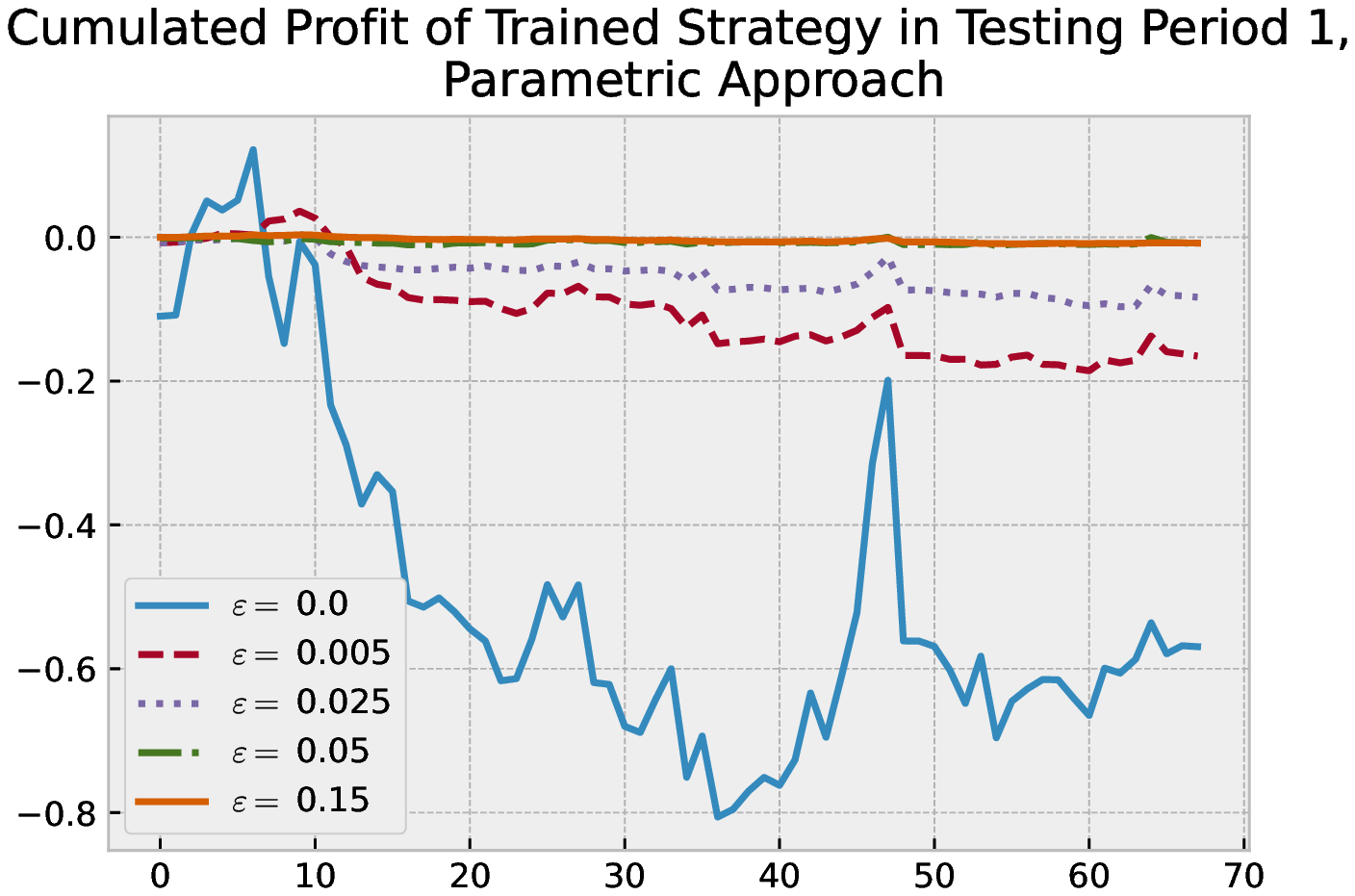}
                  \caption{$\lambda = \frac{1}{2}$}\label{fig_trades_period_1_lam_12}
 \end{subfigure}
\caption{The figure shows the cumulated training profit of the trained strategies in testing period 1 { where in Figure~(A) we report the results when setting $\lambda =0$ in \eqref{eq_def_c_portfolio} and  where in Figure~(B) we set $\lambda =\frac{1}{2}$} . The left panel { of both figures} illustrates the profit when applying a Wasserstein-ball approach, whereas the right panel illustrates the profit under a parametric approach. { Note that the x-axis describes the trading days, whereas the y-axis describes the cumulated profit.}}\label{fig_trades_period_1}
\end{figure}
\end{center}

\begin{table}[h!]
%\tiny
\begin{center}\resizebox{\linewidth}{!}{
\begin{tabular}{lccccc} \toprule
 & Overall Profit &Average Profit & $\%$ of profitable trades & Sharpe Ratio & Sortino Ratio\\
\midrule
\textbf{Wasserstein Approach, $\lambda = 0$} \\

%\hline\rowcolor[gray]{.9}
$\varepsilon = 0$ &-1.035019 	&-0.015221 	&47.06 	&-0.172211 	&-0.207249 \\
$\varepsilon = 0.01$ &0.221663 	&0.003260 	&58.82 	&0.035932 	&0.052785\\
$\varepsilon = 0.05$ &	0.080857 	&0.001189 	&55.88 	&0.015079 	&0.022100\\ 
$\varepsilon = 0.1$ &-0.154961 	&-0.002279 	&44.12 	&-0.028555 	&-0.041425\\ 
$\varepsilon = 0.3$ &0.005442 	&0.000080 	&48.53 	&0.035702 	&0.055826
\\ 
\midrule  

\textbf{Parametric Approach , $\lambda = 0$} &  \\

%\hline\rowcolor[gray]{.9}
$\varepsilon = 0$ &-0.378669 	&-0.005569 	&50.00 	&-0.067022 	&-0.083380 \\
$\varepsilon = 0.005$ &-0.165537 	&-0.002434 	&42.65 	&-0.127499 	&-0.146742\\
$\varepsilon = 0.025$ &-0.143357 	&-0.002108 	&38.24 	&-0.198093 	&-0.225493\\ 
$\varepsilon = 0.05$ &	-0.006977 	&-0.000103 	&50.00 	&-0.035993 	&-0.043762\\ 
$\varepsilon = 0.15$ &0.003136 	&0.000046 	&58.82 	&0.053274 	&0.066614
\\ 
 \bottomrule 
 \end{tabular}}
\caption{The table shows the results of the Wasserstein-ball approach (Section~\ref{sec_portfolio_wasserstein}) and the parametric approach (Section~\ref{sec_portfolio_knightian}) in the first testing period { where we set $\lambda =0$ in \eqref{eq_def_c_portfolio}}.}
\label{tbl_period_1}
\end{center}
\end{table}

\begin{table}[h!]
%\tiny
\begin{center}\resizebox{\linewidth}{!}{
{ 
\begin{tabular}{lccccc} \toprule
 & Overall Profit &Average Profit & $\%$ of profitable trades & Sharpe Ratio & Sortino Ratio\\
 \textbf{Wasserstein Approach, $\lambda = \tfrac{1}{2}$} \\

%\hline\rowcolor[gray]{.9}
$\varepsilon = 0$ &-0.564527 	&-0.008302 	&44.12 	&-0.087936 	&-0.120798 \\
$\varepsilon = 0.01$ &-0.197820 	&-0.002909 	&47.06 	&-0.035237 	&-0.048919\\
$\varepsilon = 0.05$ &-0.549781 	&-0.008085 	&50.00 	&-0.113071 	&-0.137965\\ 
$\varepsilon = 0.1$ &-0.307764 	&-0.004526 	&42.65 	&-0.084528 	&-0.112735\\ 
$\varepsilon = 0.3$ &-0.007034 	&-0.000103 	&42.65 	&-0.034035 	&-0.065102
\\ 
\midrule  

\textbf{Parametric Approach, $\lambda = \tfrac{1}{2}$} &  \\

%\hline\rowcolor[gray]{.9}
$\varepsilon = 0$ &-0.569345 	&-0.008373 	&47.06 	&-0.097639 	&-0.117886 \\
$\varepsilon = 0.005$ &-0.165356 	&-0.002432 	&44.12 	&-0.163848 	&-0.188656\\
$\varepsilon = 0.025$ &-0.083480 	&-0.001228 	&41.18 	&-0.124146 	&-0.151676\\ 
$\varepsilon = 0.05$ &-0.008617 	&-0.000127 	&51.47 	&-0.050311 	&-0.066198\\ 
$\varepsilon = 0.15$ &-0.008218 	&-0.000121 	&38.24 	&-0.129144 	&-0.152631
\\ 
 \bottomrule
 
\end{tabular}}}
\caption{ { The table shows the results of the Wasserstein-ball approach (Section~\ref{sec_portfolio_wasserstein}) and the parametric approach (Section~\ref{sec_portfolio_knightian}) in the first testing period { where we set $\lambda = \tfrac{1}{2}$ in \eqref{eq_def_c_portfolio}}.}}
\label{tbl_period_1_lam12}
\end{center}
\end{table}

\begin{center}
\newpage
\textbf{Testing Period 2}
\end{center}
\begin{center}

\begin{figure}[h!]
\begin{subfigure}[b]{0.9\textwidth}
\includegraphics[scale=0.5]{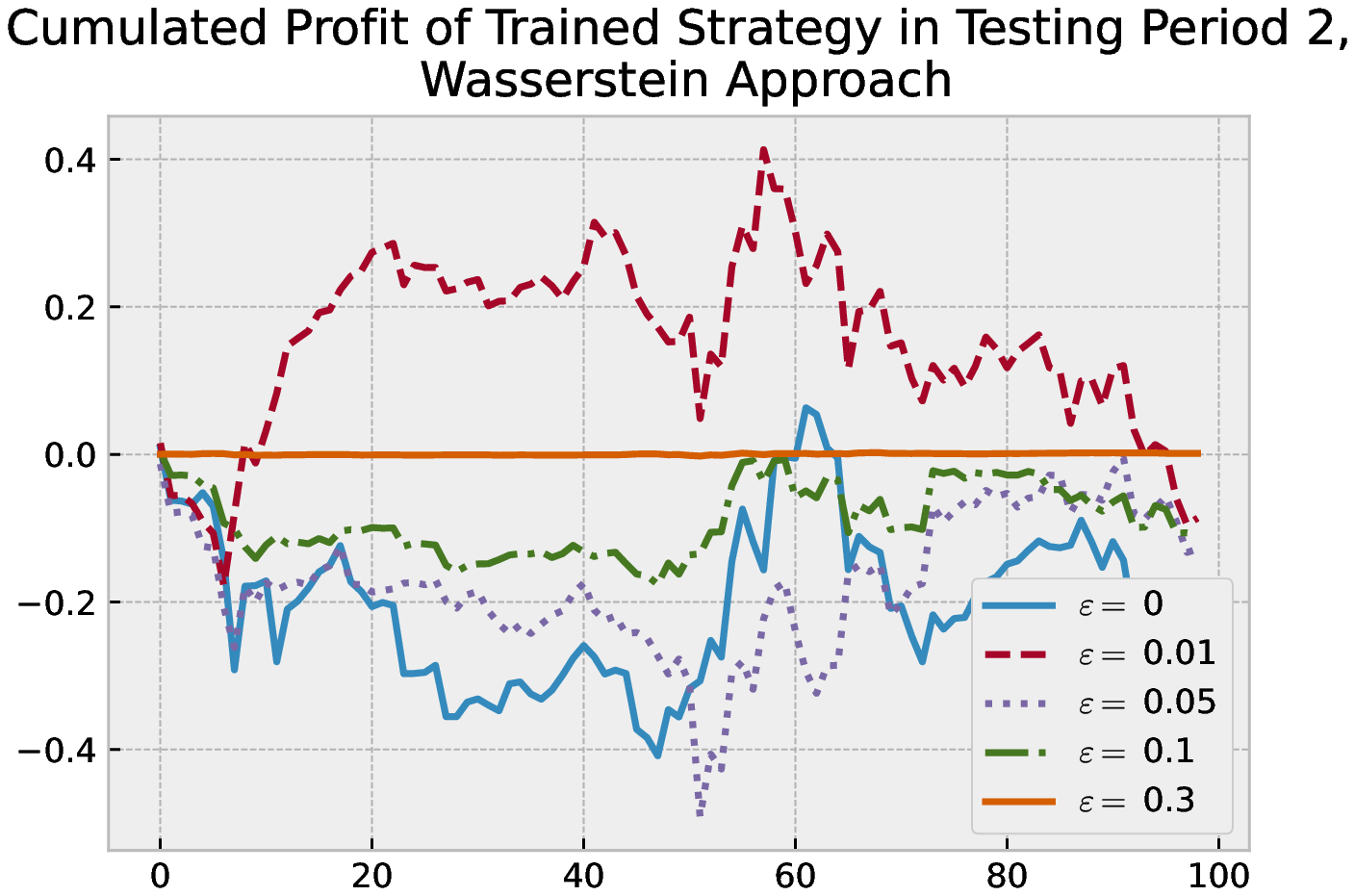}
\includegraphics[scale=0.5]{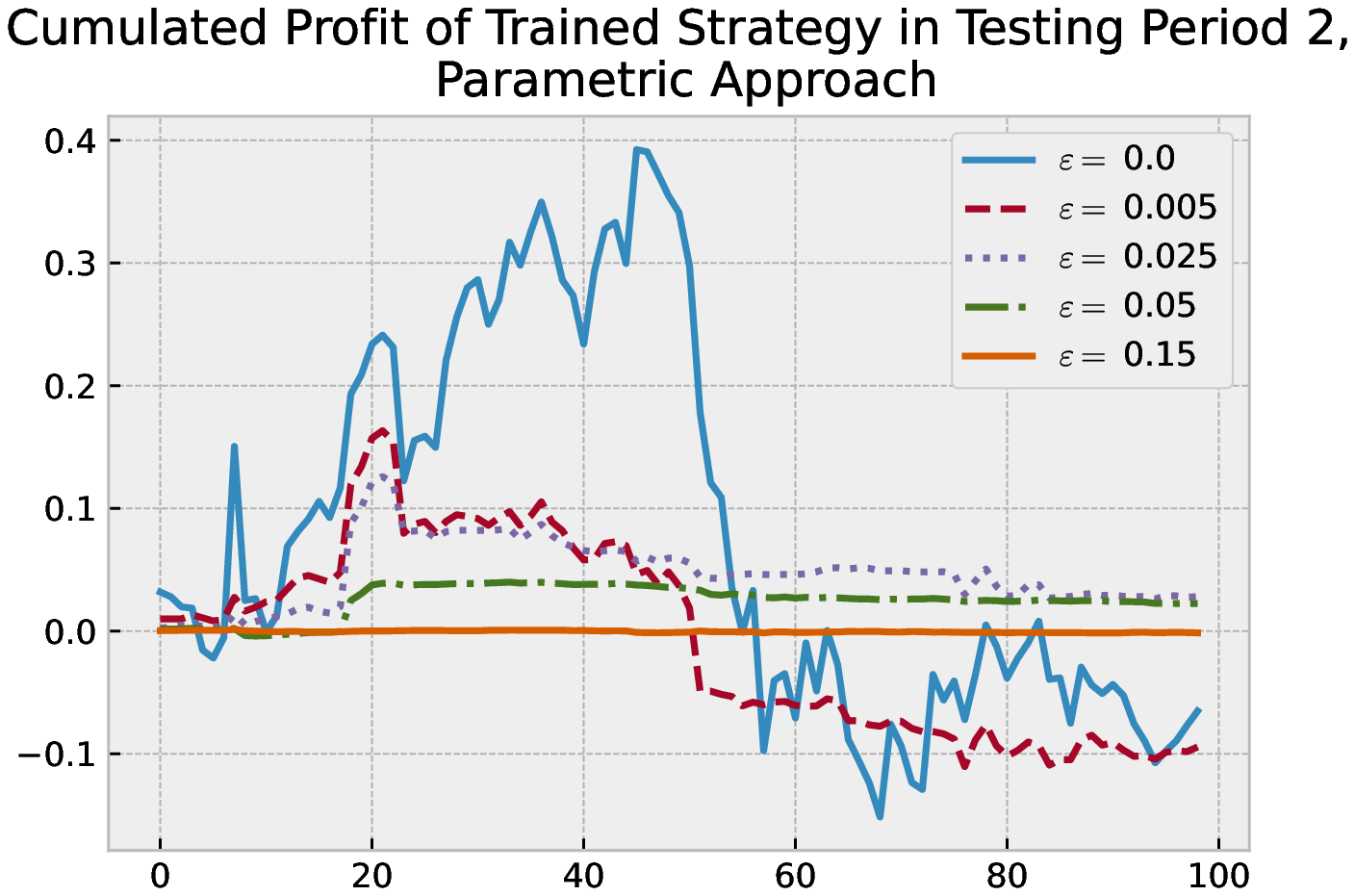}
         \caption{$\lambda = 0$}
         \label{fig_trades_period_2_lam_0}
 \end{subfigure}
 \end{figure}

 \begin{figure}[h!]
  \ContinuedFloat
\begin{subfigure}[b]{0.9\textwidth}
\includegraphics[scale=0.5]{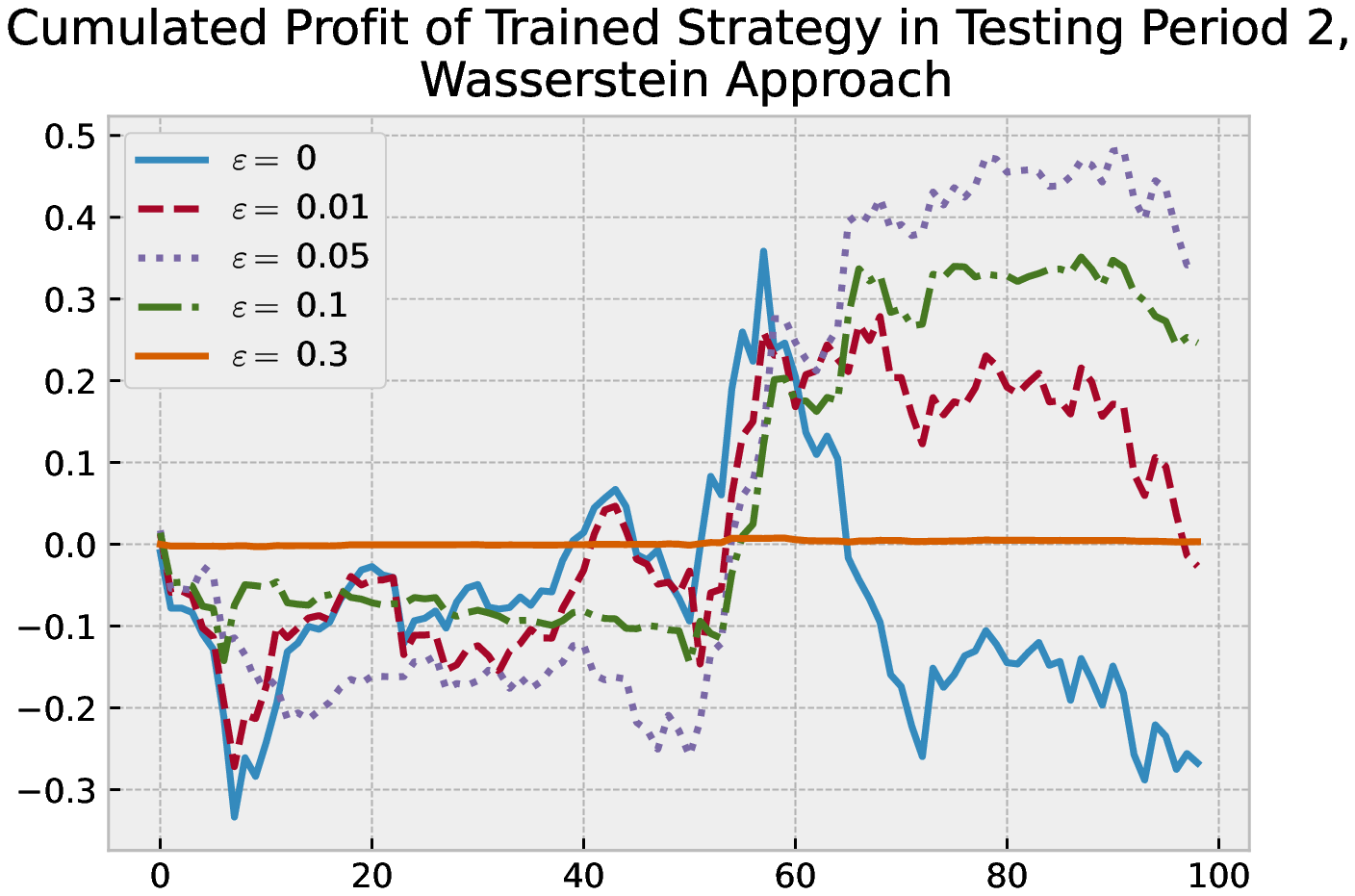}
\includegraphics[scale=0.5]{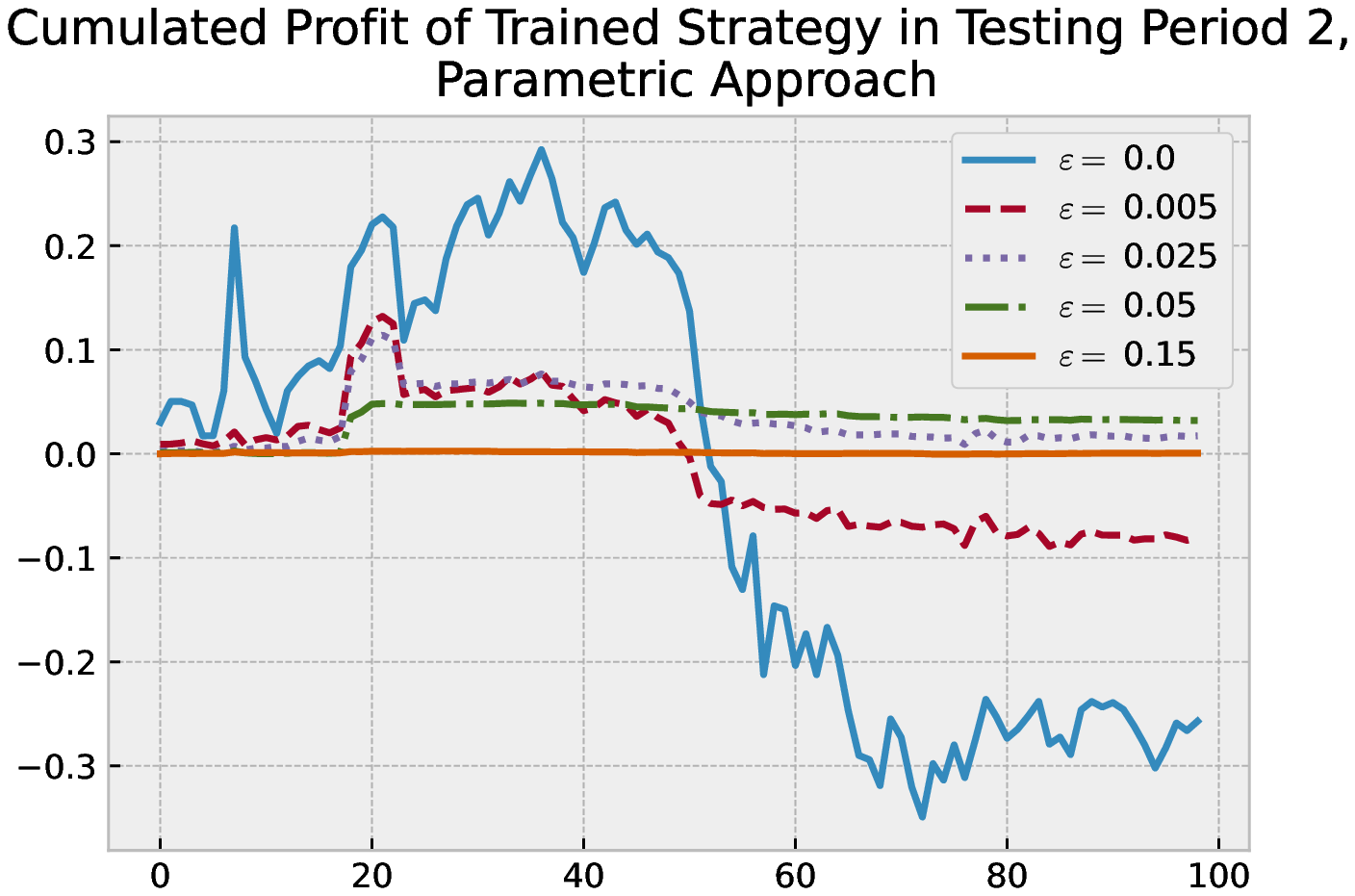}
                  \caption{$\lambda = \frac{1}{2}$}\label{fig_trades_period_2_lam_12}
 \end{subfigure}
\caption{The figure shows the cumulated training profit of the trained strategies in testing period 2 { where in Figure~(A) we report the results when setting $\lambda =0$ in \eqref{eq_def_c_portfolio} and  where in Figure~(B) we set $\lambda =\frac{1}{2}$} . The left panel { of both figures} illustrates the profit when applying a Wasserstein-ball approach, whereas the right panel illustrates the profit under a parametric approach. { Note that the x-axis describes the trading days, whereas the y-axis describes the cumulated profit.}}\label{fig_trades_period_2}
\end{figure}
\end{center}
\begin{table}[h!]
%\tiny
\begin{center}\resizebox{\linewidth}{!}{
\begin{tabular}{lccccc} \toprule
 & Overall Profit &Average Profit & $\%$ of profitable trades & Sharpe Ratio & Sortino Ratio\\
\midrule
\textbf{Wasserstein Approach, $\lambda = 0$} \\

%\hline\rowcolor[gray]{.9}
$\varepsilon = 0$ &-0.319652 	&-0.003229 	&51.52 	&-0.068955 	&-0.091312 \\
$\varepsilon = 0.01$ &	-0.086229 	&-0.000871 	&56.57 	&-0.018503 	&-0.025662\\
$\varepsilon = 0.05$ &-0.128121 	&-0.001294 	&52.53 	&-0.031648 	&-0.044057\\ 
$\varepsilon = 0.1$ &-0.108500 	&-0.001096 	&50.51 	&-0.054067 	&-0.074651\\ 
$\varepsilon = 0.3$ &0.001415 	&0.000014 	&52.53 	&0.027997 	&0.040511\\ 
\midrule  

\textbf{Parametric Approach, $\lambda = 0$} &  \\

%\hline\rowcolor[gray]{.9}
$\varepsilon = 0$ &-0.065187 	&-0.000658 	&48.48 	&-0.014956 	&-0.020914 \\
$\varepsilon = 0.005$ &-0.094396 	&-0.000953 	&50.51 	&-0.063190 	&-0.079788 \\
$\varepsilon = 0.025$ &0.027990 	&0.000283 	&55.56 	&0.030187 	&0.053250\\ 
$\varepsilon = 0.05$ &0.022454 	&0.000227 	&49.49 	&0.079555 	&0.277870
\\ 
$\varepsilon = 0.15$ &	-0.001429 	&-0.000014 	&49.49 	&-0.048469 	&-0.061703\\ 
 \bottomrule
 \end{tabular}}
\caption{The table shows the results of the Wasserstein-ball approach (Section~\ref{sec_portfolio_wasserstein}) and the parametric approach (Section~\ref{sec_portfolio_knightian}) in the second testing period  { where we set $\lambda =0$ in \eqref{eq_def_c_portfolio}}.}
\label{tbl_period_2}
\end{center}
\end{table}
\begin{table}[h!]
%\tiny
\begin{center}\resizebox{\linewidth}{!}{
{ 
\begin{tabular}{lccccc} \toprule
 & Overall Profit &Average Profit & $\%$ of profitable trades & Sharpe Ratio & Sortino Ratio\\
\midrule
\textbf{Wasserstein Approac, $\lambda = \tfrac{1}{2}$} \\

%\hline\rowcolor[gray]{.9}
$\varepsilon = 0$ &-0.267468 	&-0.002702 	&47.47 	&-0.058955 	&-0.082091 \\
$\varepsilon = 0.01$ &-0.026789 	&-0.000271 	&53.54 	&-0.006770 	&-0.009480\\
$\varepsilon = 0.05$ &0.344366 	&0.003478 	&52.53 	&0.099127 	&0.180309\\ 
$\varepsilon = 0.1$ &0.246400 	&0.002489 	&43.43 	&0.092752 	&0.181330\\ 
$\varepsilon = 0.3$ &0.003219 	&0.000033 	&49.49 	&0.041700 	&0.076582\\ 
\midrule  

\textbf{Parametric Approach, $\lambda = \tfrac{1}{2}$} &  \\

%\hline\rowcolor[gray]{.9}
$\varepsilon = 0$ &-0.257053 	&-0.002596 	&47.47 	&-0.063529 	&-0.084109 \\
$\varepsilon = 0.005$ &-0.080422 	&-0.000812 	&50.51 	&-0.064610 	&-0.086200 \\
$\varepsilon = 0.025$ &0.017278 	&0.000175 	&45.45 	&0.020485 	&0.035732\\ 
$\varepsilon = 0.05$ &0.032120 	&0.000324 	&44.44 	&0.090080 	&0.578979\\ 
$\varepsilon = 0.15$ &0.000581 	&0.000006 	&51.52 	&0.022708 	&0.037737\\ 
 \bottomrule
 \end{tabular}}}
\caption{{ The table shows the results of the Wasserstein-ball approach (Section~\ref{sec_portfolio_wasserstein}) and the parametric approach (Section~\ref{sec_portfolio_knightian}) in the second testing period { where we set $\lambda =\frac{1}{2}$ in \eqref{eq_def_c_portfolio}}.}}
\label{tbl_period_2_lam_half}
\end{center}
\end{table}
\begin{center}
\newpage
\textbf{Testing Period 3}
\end{center}
\begin{center}

\begin{figure}[h!]
\begin{subfigure}[b]{0.9\textwidth}
\includegraphics[scale=0.5]{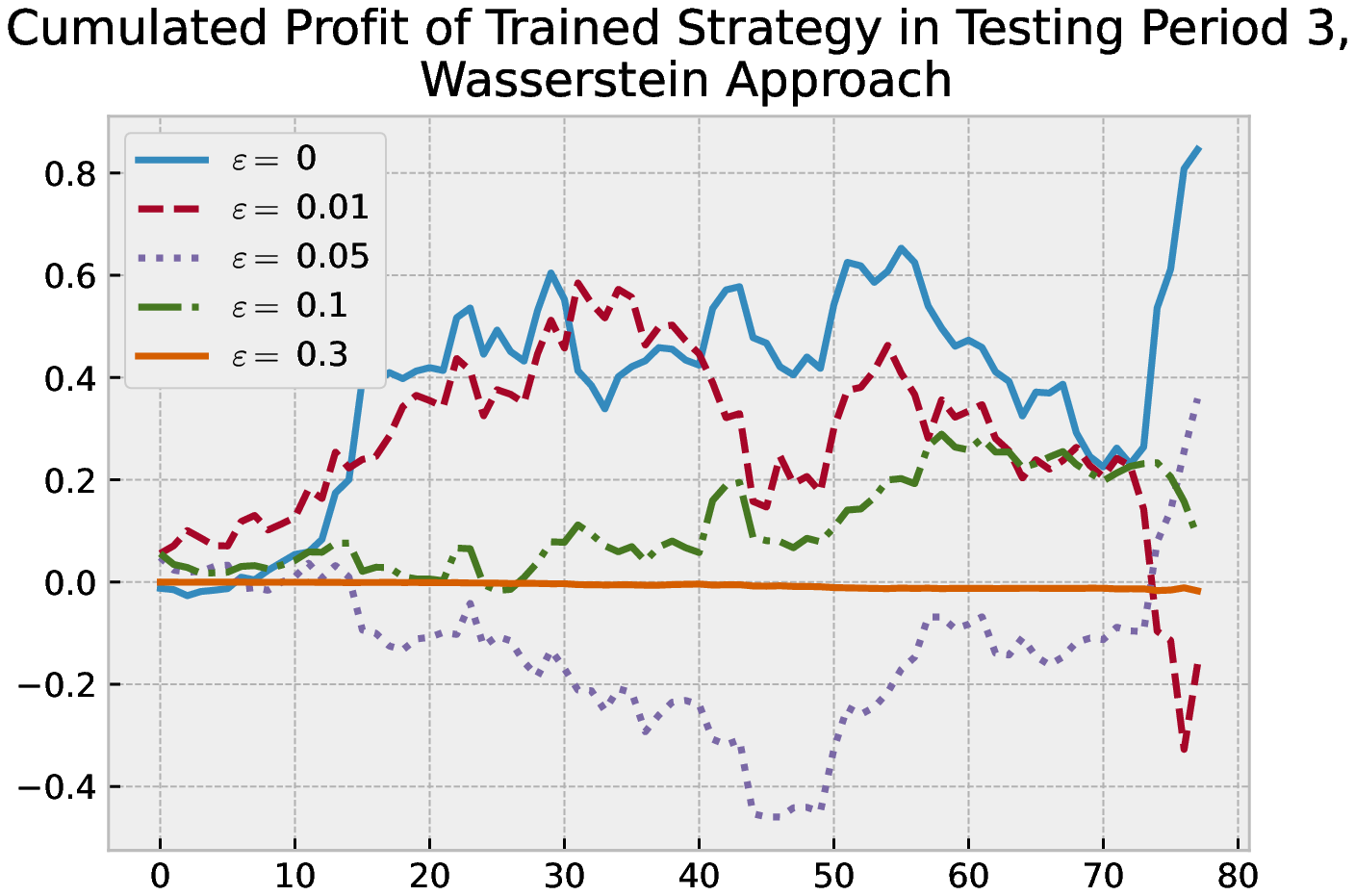}
\includegraphics[scale=0.5]{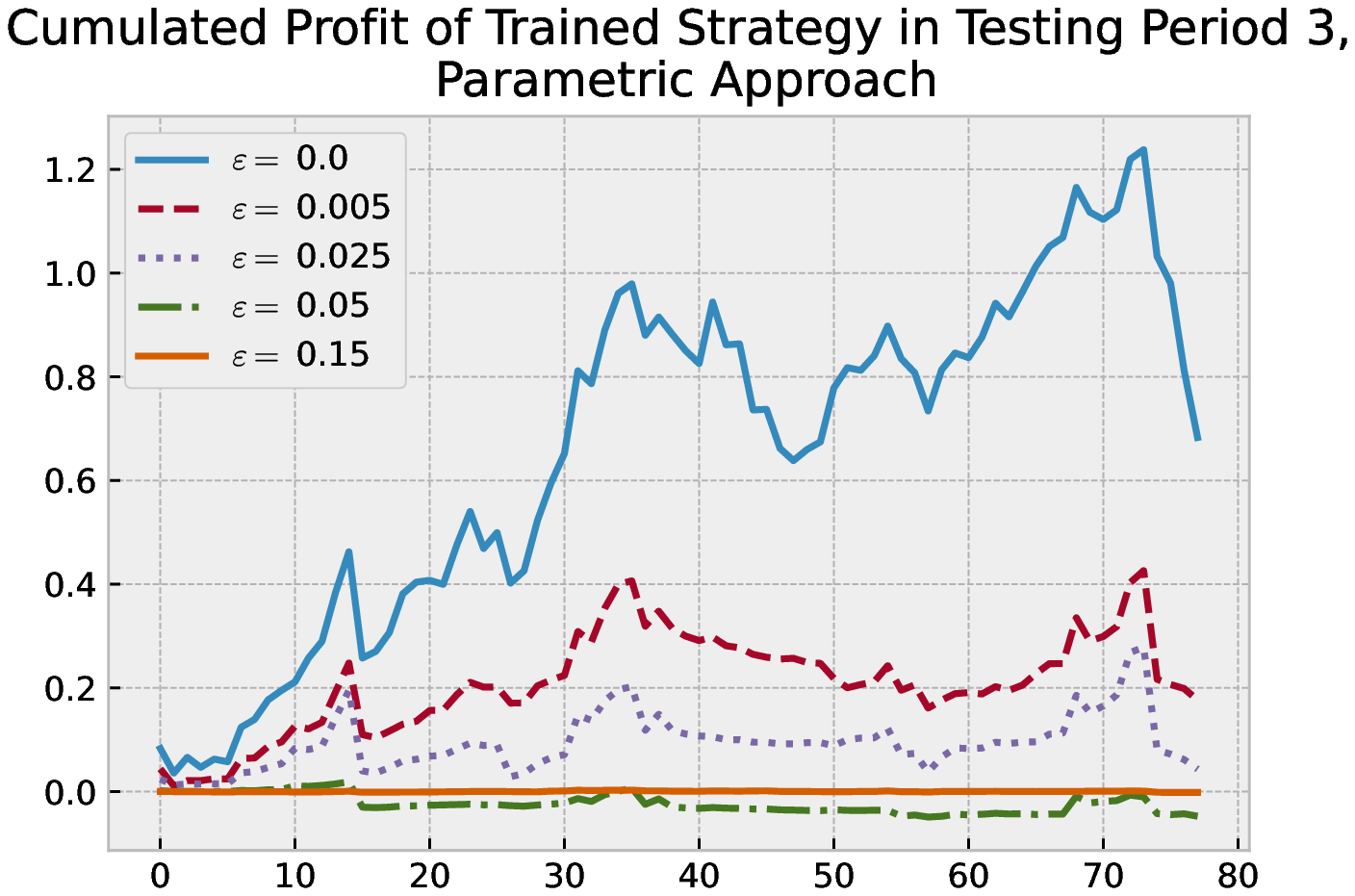}
         \caption{$\lambda = 0$}
         \label{fig_trades_period_3_lam_0}
 \end{subfigure}
  \end{figure}

 \begin{figure}[h!]
  \ContinuedFloat
\begin{subfigure}[b]{0.9\textwidth}
\includegraphics[scale=0.5]{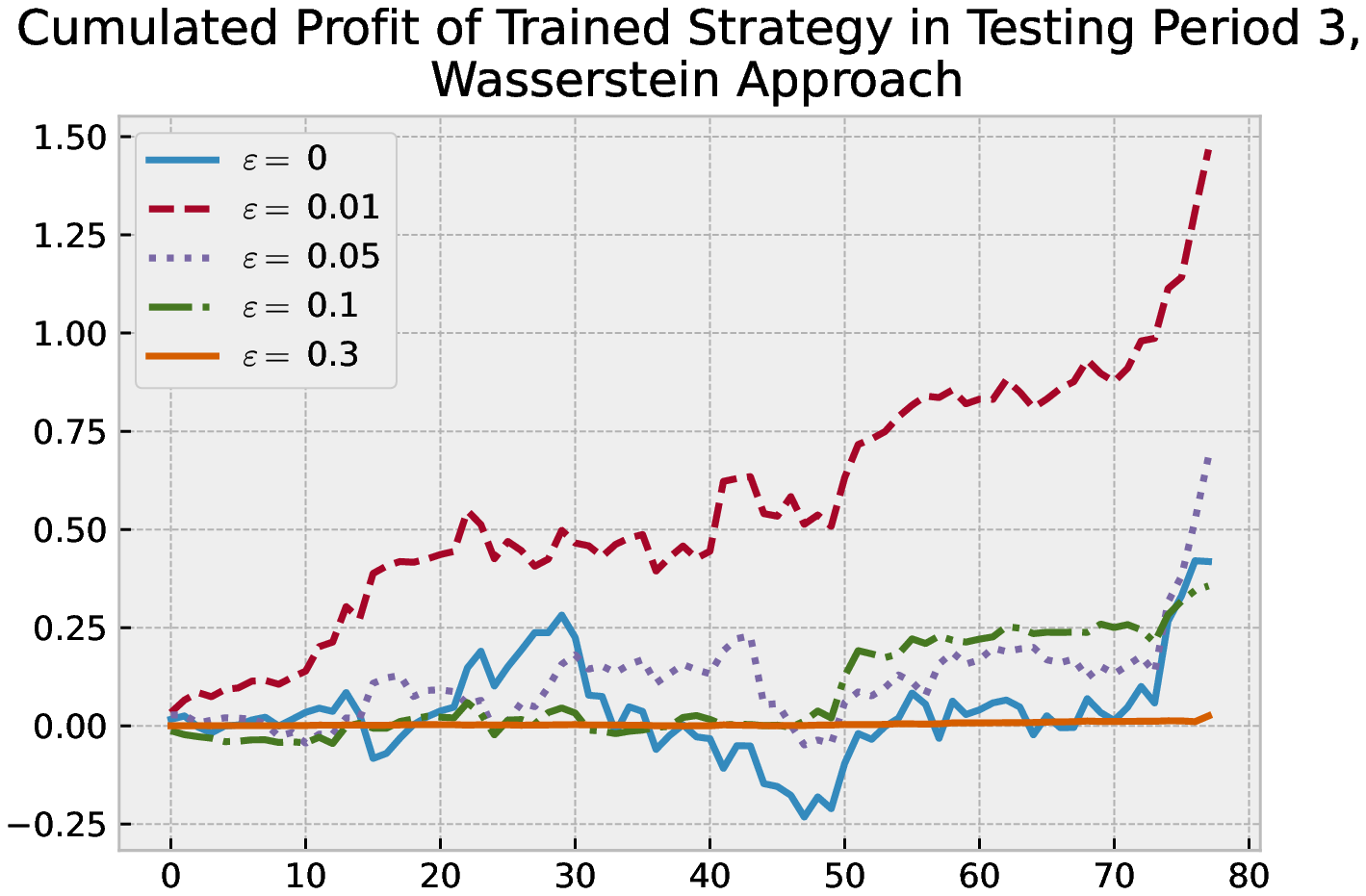}
\includegraphics[scale=0.5]{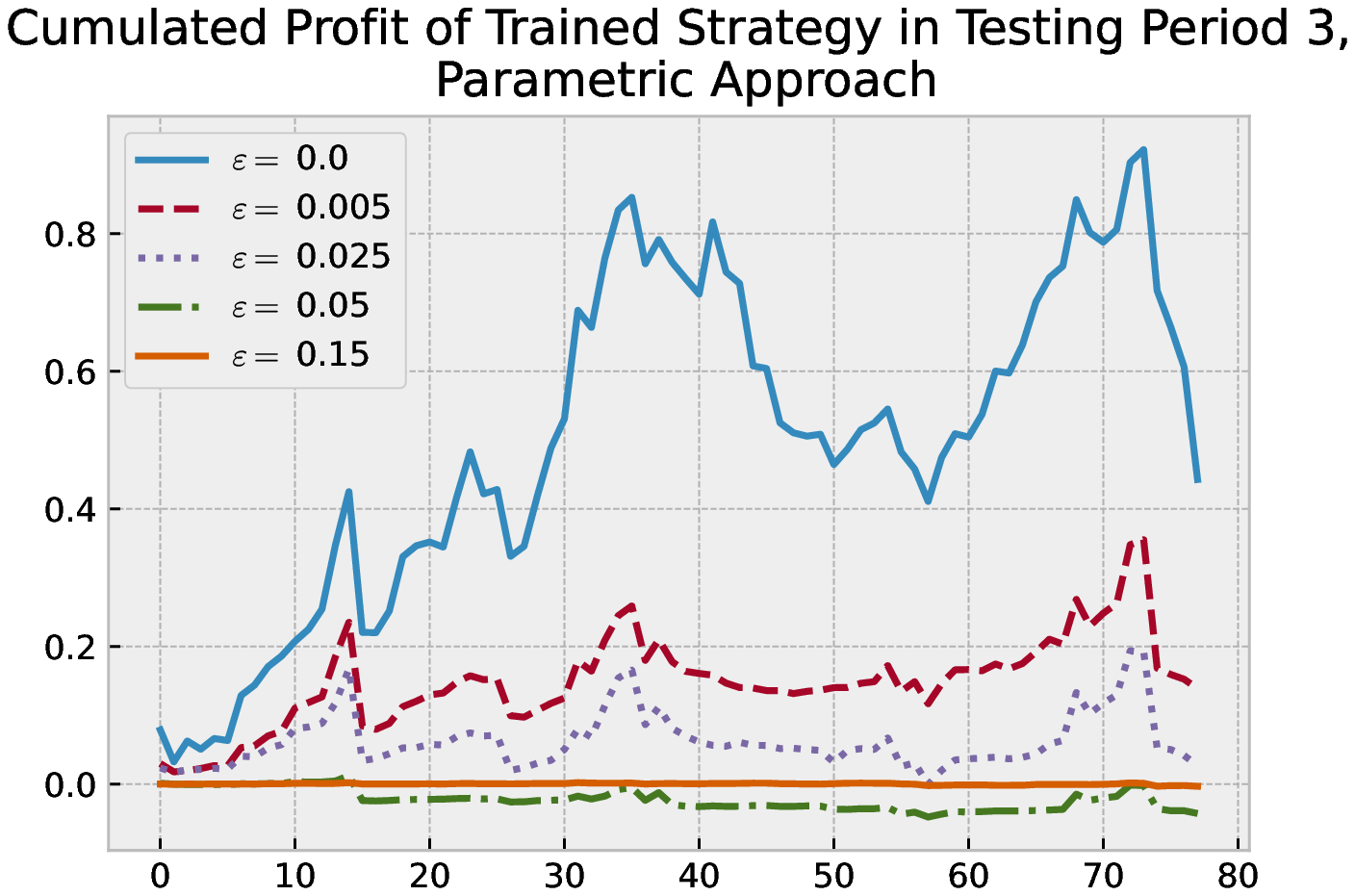}
                  \caption{$\lambda = \frac{1}{2}$}\label{fig_trades_period_3_lam_12}
 \end{subfigure}
\caption{The figure shows the cumulated training profit of the trained strategies in testing period 3 { where in Figure~(A) we report the results when setting $\lambda =0$ in \eqref{eq_def_c_portfolio} and  where in Figure~(B) we set $\lambda =\frac{1}{2}$} . The left panel { of both figures} illustrates the profit when applying a Wasserstein-ball approach, whereas the right panel illustrates the profit under a parametric approach. { Note that the x-axis describes the trading days, whereas the y-axis describes the cumulated profit.}}\label{fig_trades_period_3}
\end{figure}
\end{center}
\begin{table}[h!]
%\tiny
\begin{center}\resizebox{\linewidth}{!}{
\begin{tabular}{lccccc} \toprule
 & Overall Profit &Average Profit & $\%$ of profitable trades & Sharpe Ratio & Sortino Ratio\\
\midrule
\textbf{Wasserstein Approach, $\lambda = 0$} \\

%\hline\rowcolor[gray]{.9}
$\varepsilon = 0$ &0.845952 	&0.010846 	&52.56 	&0.166630 	&0.335301
 \\
$\varepsilon = 0.01$ &-0.160733 	&-0.002061 	&48.72 	&-0.031032 	&-0.040478
\\
$\varepsilon = 0.05$ &	0.358808 	&0.004600 	&52.56 	&0.096431 	&0.154537
\\ 
$\varepsilon = 0.1$ &0.087670 	&0.001124 	&52.56 	&0.037463 	&0.053469
\\ 
$\varepsilon = 0.3$ &-0.017214 	&-0.000221 	&43.59 	&-0.199209 	&-0.231336
\\ 
\midrule  

\textbf{Parametric Approach, $\lambda = 0$} &  \\

%\hline\rowcolor[gray]{.9}
$\varepsilon = 0$ &0.682794 	&0.008754 	&64.10 	&0.124028 	&0.170558 \\
$\varepsilon = 0.005$ &0.176339 	&0.002261 	&61.54 	&0.054989 	&0.069144\\
$\varepsilon = 0.025$ &0.043324 	&0.000555 	&64.10 	&0.014557 	&0.017213\\ 
$\varepsilon = 0.05$ &	-0.047627 	&-0.000611 	&52.56 	&-0.064649 	&-0.075748\\ 
$\varepsilon = 0.15$ &-0.001740 	&-0.000022 	&51.28 	&-0.039336 	&-0.049270 \\
 \bottomrule
\end{tabular}}
\caption{The table shows the results of the Wasserstein-ball approach (Section~\ref{sec_portfolio_wasserstein}) and the parametric approach (Section~\ref{sec_portfolio_knightian}) in the third testing period  { where we set $\lambda =0$ in \eqref{eq_def_c_portfolio}}.}
\label{tbl_period_3}
\end{center}
\end{table}
\begin{table}[h!]
%\tiny
\begin{center}\resizebox{\linewidth}{!}{{ 
\begin{tabular}{lccccc} \toprule
 & Overall Profit &Average Profit & $\%$ of profitable trades & Sharpe Ratio & Sortino Ratio\\
\midrule
\textbf{Wasserstein Approach, $\lambda = \tfrac{1}{2}$} \\

%\hline\rowcolor[gray]{.9}
$\varepsilon = 0$ &0.418353 	&0.005364 	&56.41 	&0.094879 	&0.144220\\
$\varepsilon = 0.01$ &1.467143 	&0.018810 	&69.23 	&0.339216 	&0.793886
\\
$\varepsilon = 0.05$ &0.685678 	&0.008791 	&58.97 	&0.174976 	&0.310182
\\ 
$\varepsilon = 0.1$ &0.356257 	&0.004567 	&52.56 	&0.194070 	&0.417113
\\ 
$\varepsilon = 0.3$ &0.025321 	&0.000325 	&57.69 	&0.179535 	&0.757265
\\ 
\midrule  

\textbf{Parametric Approach, $\lambda = \tfrac{1}{2}$} &  \\

%\hline\rowcolor[gray]{.9}
$\varepsilon = 0$ &0.442670 	&0.005675 	&58.97 	&0.086985 	&0.117434\\
$\varepsilon = 0.005$ &0.138669 	&0.001778 	&62.82 	&0.048293 	&0.058150\\
$\varepsilon = 0.025$ &0.026678 	&0.000342 	&61.54 	&0.011119 	&0.013274\\ 
$\varepsilon = 0.05$ &-0.042799 	&-0.000549 	&60.26 	&-0.071591 	&-0.083011\\ 
$\varepsilon = 0.15$ &-0.003295 	&-0.000042 	&56.41 	&-0.056704 	&-0.065249 \\
 \bottomrule
\end{tabular}}}
\caption{{ The table shows the results of the Wasserstein-ball approach (Section~\ref{sec_portfolio_wasserstein}) and the parametric approach (Section~\ref{sec_portfolio_knightian}) in the third testing period { where we set $\lambda =\frac{1}{2}$ in \eqref{eq_def_c_portfolio}}.}}
\label{tbl_period_3_lam_half}
\end{center}
\end{table}
{ 
\begin{rem}[Computational Time]
It turns out that training  a robust trading strategy in the Wasserstein approach is slightly faster than training a strategy in the parametric approach: for training $50$ epochs with the parameters specified in Section~\ref{sec_implementation} the Wasserstein approach needs $15.07$ minutes whereas the parameteric approach needs $23.12$ minutes on a standard computer.\footnote{  We used for the computations a \emph{Gen Intel(R) Core(TM) i7-1165G7, 2.80 GHz} processor with $40$ GB RAM.}
\end{rem}}

\subsubsection{Discussion of the Results}

Note that in contrast to the third bullish testing period, the first and second testing periods comprise scenarios that did not occur in similar form during the training period. Hence, it cannot be guaranteed that a non-robust trading strategy that was trained in periods where such scenarios never occurred can trade profitably during testing period 1 and testing period 2.
Indeed, as the numerical results reveal, applying a trained non-robust trading strategy not respecting any distributional ambiguity (i.e. $\varepsilon = 0$) leads to significant losses in the adverse scenarios considered in testing period 1 and 2, while robust trading strategies, which did encounter also adverse scenarios during training, clearly outperform the non-robust strategy for both considered approaches, namely the Wasserstein-ball approach and the parametric approach. 

As the third testing period comprises a bullish market period, occurring in similar form in the training data, the non-robust approach turns out to be the most profitable approach in this period, since it is the approach that is best adjusted to this scenario. However, when choosing the right level of $\varepsilon$, the robust approach still can trade profitably in this period.

In general, it turns out to be important to correctly identify the appropriate \emph{size} of the ambiguity set (here encoded by the radius $\varepsilon$) { which needs to be adjusted for each optimzation under consideration}. Due to the formulation of the robust optimization problem as a worst-case approach, respecting for more distributional ambiguity means to consider more \emph{bad} scenarios, and therefore may eventually result in a more careful, less volatile trading behavior with smaller returns, as it clearly can be seen in the case $\varepsilon=0.3$ for the Wasserstein-ball approach and in the case $\varepsilon=0.15$ for the parametric approach, respectively.
In contrast, insufficiently accounting for  uncertainty comes at the cost of not being well equipped when adverse scenarios occur, an observation that was already made in similar empirical studies that use different approaches to solve robust optimization problems, compare, e.g., \cite{lutkebohmert2021robust} and \cite{neufeld2022detecting}. 
Hence, choosing an \emph{intermediate} level of ambiguity seems to be an appropriate choice.
Supporting this rationale, our numerical results show that { for both cases $\lambda =0$ and $\lambda = \frac{1}{2}$ }choosing a level of $\varepsilon=0.05$ in the Wasserstein-ball approach and of $\varepsilon=0.025$ in the parametric approach, respectively, lead to trading strategies that outperform in testing periods $1$ and $2$ the respective non-robust trading strategies and still lead to profits in testing period 3.

 This provides evidence that taking into account distributional uncertainty may be of particular importance in volatile and crisis-like market scenarios which did not occur previously in a similar form.
{ Our results also show that taking into account a penalization of large variances of cumulated gains by setting $\lambda =\frac{1}{2}$ in \eqref{eq_def_c_portfolio} leads to slightly changed outcomes while the above discussed observations remain true.}
 
{ A practitioner may be in the situation to decide whether to use a robust approach or a non-robust approach for portfolio optimization: one can derive from our results that the robust portfolio optimization approach takes into account that an assumed or estimated underlying model may be misspecified, i.e., realized scenarios are not appropriately covered by an assumed model. Hence, if the practitioner sees no risk of misspecification and believes strongly in his model, then it is advisable for him to use a non-robust approach as the empirical results provide evidence that if there is indeed no misspecification, non-robust approaches slightly outperform robust approaches. If however the practitioner wants to take into account the (difficult to determine) risk of having chosen a wrong model and wants to be robust against adverse scenarios, then our results imply that she should rather rely on a distributionally robust approach. }

{
\subsubsection{{ Comparison with other Robust Portfolio Optimization Approaches}}
The literature on robust portfolio optimization is rapidly growing. { We refer to, e.g.,  \cite{blanchet2021distributionally} and \cite{pham2022portfolio} for alternative approaches to robust portfolio optimization, to \cite{bartl2019exponential}, \cite{bartl2021duality}, \cite{bartl2019robust}, \cite{blanchard2018multiple}, \cite{neufeld2018robust}, \cite{neufeld2019nonconcave} for the fast growing literature on the related problem of robust utility optimization in non-dominated settings in discrete time, and to \cite{biagini2017robust}, \cite{chau2019robust}, \cite{denis2013optimal}, \cite{fouque2016portfolio}, \cite{guo2022robust}, \cite{ismail2019robust}, \cite{liang2020robust}, \cite{lin2021optimal}, \cite{lin2020horizon}, \cite{matoussi2012robust}, \cite{neufeld2018robust}, \cite{pham2022portfolio}, \cite{pun2021g}, \cite{tevzadze2013robust}, \cite{yang2019constrained} for robust utility maximization in continuous time.
Let us compare our contribution with articles that recently analyzed how different \emph{choices of (the size of) ambiguity sets} affect the performance of the portfolio optimization problem.

In the distributionally robust optimization approach proposed in \cite{blanchet2021distributionally} the ambiguity set is specifically given by a Wasserstein ambiguity set and their specific objective function relates to Markowitz's mean-variance portfolio optimization problem. Their setting is reduced to this ambiguity specification but therefore allows to apply a duality result which transfers the distributionally robust optimization problem  into a non-robust regularized optimization problem which can be solved explicitly. Moreover, the authors provide a data-driven methodology to determine the \emph{size} of the ambiguity set relying on results from \cite{blanchet2019robust}.

In a recent paper, \cite{pham2022portfolio} study in a continuous-time framework portfolio optimization under drift and correlation ambiguity. Their framework allows to determine different optimal allocation decisions in dependence of the degree of ambiguity specified, where more ambiguity leads to less investment in risky assets. This result can be seen in line with our finding that a higher degree of ambiguity leads to more careful trading decisions.

In the same spirit, the results from \cite{obloj2021distributionally}, where the authors consider a portfolio optimization approach under Wasserstein-ambiguity in an one-period market, imply that the size of the ambiguity set corresponds to the risk attitude of the market participant.

In \cite{du2020new}, the authors also rely their approach on Wasserstein-ambiguity sets and analyze the mean-CVar portfolio optimization problem where CVar refers to the conditional value at risk. Their empirical study that is carried out on Chinese and American stock markets again confirms the previously discussed findings that a higher radius of the Wasserstein-ball corresponds to a higher degree of risk aversion which leads to  different, more careful allocation results.
}
\section{Proof of Theorem~\ref{thm_main_result}} \label{sec_proof_main_results}
\begin{proof}[Proof of Theorem~\ref{thm_main_result}~(i)]
Let $v\in C_p(\Omloc,\R)$. We define the map 
\begin{equation}\label{eq_f_continuous_contraction_proof}
\begin{aligned}
F:\operatorname{Gr} \mathcal{P} = \{(x,a_0,\PP_0)~|~x\in \Omloc,a_0\in A, \PP_0\in  \mathcal{P}(x,a_0)\} &\rightarrow \R\\
(x,a_0,\PP_0) &\mapsto \int_{\Omloc}r(x,a_0,\omega_1)+\alpha  v(\omega_1)\PP_0(\D\omega_1).
\end{aligned}
\end{equation}
We claim that the map $F$ is continuous {(or equivalently sequentially continuous)}. { To this end, let $(x,a_0,\PP_0) \in \operatorname{Gr} \mathcal{P}$ and $(x^{(n)},a_0^{(n)},\PP_0^{(n)}) \subseteq \operatorname{Gr} \mathcal{P}$ be a sequence  with\footnote{{We highlight that $\operatorname{Gr} \mathcal{P} \subseteq \Omloc \times A \times (\mathcal{M}_1(\Omloc),\tau_p)$ is endowed with the product topology, see also Section~\ref{sec_setting}. }}}  $(x^{(n)},a_0^{(n)},\PP_0^{(n)}) \rightarrow (x,a_0,\PP_0)$ for $n \rightarrow \infty$. Then,
\begin{align}
&|F(x^{(n)},a_0^{(n)},\PP_0^{(n)})-F(x,a_0,\PP_0)| \notag \\
\leq &|F(x^{(n)},a_0^{(n)},\PP_0^{(n)})-F(x,a_0,\PP_0^{(n)})| +|F(x,a_0,\PP_0^{(n)})-F(x,a_0,\PP_0)|. \label{eq_F_ineq}
\end{align}
The second summand $|F(x,a_0,\PP_0^{(n)})-F(x,a_0,\PP_0)|$ vanishes for $n \rightarrow \infty$ since the integrand $\omega_1 \mapsto r(x,a_0,\omega_1)+\alpha  v(\omega_1)$ is an element of $C_p(\Omloc,\R)$. For the first summand we obtain, by using Assumption~\ref{asu_2}~(ii), that
 \begin{align*}
  \lim_{n \rightarrow \infty}&\left|F(x^{(n)},a_0^{(n)},\PP_0^{(n)})-F(x,a_0,\PP_0^{(n)})\right|\\
  &\leq    \lim_{n \rightarrow \infty}  \int_{\Omloc} \left|r(x^{(n)},a_0^{(n)},\omega_1)-r(x,a_0,\omega_1)\right|\PP_0^{(n)}(\D\omega_1)\\
 &\leq    \lim_{n \rightarrow \infty}  \int_{\Omloc} L  \cdot { (1+\|\omega_1\|^p) \cdot} \bigg( { \rho_0}\left( \left\|x^{(n)}-x\right\|\right)+{ \rho_A}\left(\left\|a_0^{(n)}-a_0\right\| \right)\bigg)\PP_0^{(n)}(\D\omega_1)\\
 &= \lim_{n \rightarrow \infty}   L \cdot \bigg( { \rho_0}\left( \left\|x^{(n)}-x\right\|\right)+{ \rho_A}\left(\left\|a_0^{(n)}-a_0\right\| \right)\bigg)  \cdot  \int_{\Omloc}  { (1+\|\omega_1\|^p) \cdot} \PP_0(\D\omega_1) =0, 
 \end{align*}
where we use in the last equality that due to the convergence $\PP_0^{(n)} \xrightarrow{\tau_p}  \PP_0$ also the $p$-th moments converge, see e.g. \cite[Definition 6.8.~(i), and Theorem 6.9]{villani2009optimal}.
Thus, $F$ is continuous. Therefore, we may apply Berge's maximum theorem (Theorem~\ref{thm_berge}) and obtain that the map
\begin{equation}\label{eq_continuous_berge}
\begin{aligned}
G:\Omloc \times A  &\rightarrow \R \\
 (x,a_0) &\mapsto \inf_{\PP_0 \in \mathcal{P}(x,a_0)}\int_{\Omloc}r(x,a_0,\omega_1)+\alpha  v(\omega_1)\PP_0(\D\omega_1)
\end{aligned}
\end{equation}
is continuous, and  the set of minimizers is nonempty for each $(x,a_0) \in \Omloc \times A$. According to the measurable maximum theorem {(Theorem~\ref{thm_measurable_maximum})} there exists a measurable selector $\Omloc \times A \ni (x,a_0) \mapsto {\PP_0}^*(x,a_0)\in \mathcal{P}(x,a_0)$, where ${\PP_0}^*(x,a_0)$ minimizes the integral in \eqref{eq_continuous_berge} for each $(x,a_0) \in \Omloc \times A$. This shows the assertion in \eqref{eq_claim_1_thm}.
Next, we use that $\Omloc \times A \ni (x,a_0) \mapsto G(x,a_0)$ is continuous, and  apply again Berge's maximum theorem to the constant compact-valued correspondence $\Omloc \ni x\twoheadrightarrow A\subset \R^m$ and to $G$. This yields that the map 
\begin{equation}\label{eq_continuous_berge_2}
\begin{aligned}
H:\Omloc   &\rightarrow \R \\
 x &\mapsto \sup_{a_0 \in A} G(x,a_0).
\end{aligned}
\end{equation}
is continuous and that the set of maximizers of $G(x,\cdot)$ is nonempty for each $x \in \Omloc$. Moreover, by the measurable maximum theorem {(Theorem~\ref{thm_measurable_maximum})} there exists a measurable selector $\Omloc \ni x\mapsto \aloc^*(x) \in A $, where $\aloc^*(x)$ maximizes  $G(x,\cdot)$ for each $x \in \Omloc$. This shows \eqref{eq_claim_2_thm}. Next, we define the map  $\PP_{\operatorname{loc}}^* \in \mathbf{P}_{\aloc^*}$ by
\begin{align*}
\PP_{\operatorname{loc}}^*:\Omloc &\rightarrow \mathcal{M}_1(\Omloc)\\
x &\mapsto {\PP_0}^*(x,\aloc^*(x)).
\end{align*}
{ Then, by \eqref{eq_continuous_berge}, \eqref{eq_continuous_berge_2}, the definition of $\Omloc \ni x \mapsto \aloc^*(x) \in A$, and the definition of $\Omloc \times A \ni (x,a_0) \mapsto \PP_0^*(x,a_0) \in \mathcal{M}_1(\Omloc)$,} we  obtain for each $x \in \Omloc$ that  
\begin{equation}\label{eq_proof_thm_lastequation_i}
\begin{aligned}
{ \T} v(x) = H(x)&= { \sup_{a_0 \in A} G(x,a_0)} \\
&=\sup_{a_0 \in A} \inf_{\PP_0 \in \mathcal{P}(x,a_0)}\int_{\Omloc}r(x,a_0,\omega_1)+\alpha  v(\omega_1)\PP_0(\D\omega_1) \\
&=\inf_{\PP_0 \in \mathcal{P}(x,a_0)}\int_{\Omloc}r(x,\aloc^*(x),\omega_1)+\alpha  v(\omega_1)\PP_0(\D\omega_1) \\
&= \int_{\Omloc}r(x,\aloc^*(x),\omega_1)+\alpha  v(\omega_1)\PP_{\operatorname{loc}}^*(x;\D\omega_1)\\
&{ = \inf_{\PP \in \mathbf{P}_{\aloc^*}} \int_{\Omloc}r(x,\aloc^*(x),\omega_1)+\alpha  v(\omega_1)\PP(x;\D\omega_1) }\\
&= \sup_{\aloc \in \Aloc} \inf_{\PP \in \mathbf{P}_{\aloc}} \int_{\Omloc}r(x,\aloc(x),\omega_1)+\alpha  v(\omega_1)\PP (x;\D\omega_1).
\end{aligned}
\end{equation}
This shows \eqref{eq_thm_assertion_1} and therefore completes the proof of (i).
\end{proof}

\begin{proof}[Proof of Theorem~\ref{thm_main_result}~ (ii)]
The continuity of $\Omloc \ni x \mapsto \T v(x)$ follows from the continuity of $H$ defined in \eqref{eq_continuous_berge_2} and \eqref{eq_proof_thm_lastequation_i}.  By the growth conditions on $r$ and $X_1$  (Assumption~\ref{asu_2}~(iii)), we obtain for all $x\in \Omloc$ that 
\begin{align*}
\T v(x)&\leq \sup_{\aloc \in \Aloc}\inf_{\PP_0 \in \mathbf{P}_{\aloc}}\E_{\PP_0}\left[C_r(1+\|x\|^p+\left\|X_1\right\|^p)+\alpha \|v\|_{C_p}\left(1+\left\|X_1\right\|^p\right)\right]\\
&\leq C_r\left (\|x\|^p+C_P(1+\|x\|^p)\right)+\alpha \|v\|_{C_p}C_P(1+\|x\|^p)\\
&\leq \left(C_r +C_rC_P+\alpha \|v\|_{C_p} C_P\right)(1+\|x\|^p).
\end{align*}
Hence, $\T v \in C_p(\Omloc,\R)$.  Note that for every nonempty set $\mathcal{Q}$ and for all $G,H:\mathcal{Q}\rightarrow \R$ we have that 
\begin{equation}\label{eq_inf_smaller_than_sup}
{ \bigg|}\inf_{\Q \in \mathcal{Q}}G(\Q)-\inf_{\Q \in \mathcal{Q}}H(\Q){ \bigg|} \leq \sup_{\Q \in \mathcal{Q}} \left|G(\Q)-H(\Q)\right|.
\end{equation}
Therefore, we obtain for every $v,w \in C_p(\Omloc,\R), x\in \Omloc$, by using \eqref{eq_growth_constraint_on_p}, that
\begin{align*}
{ \bigg|}\T v(x)-\T w(x){ \bigg|} &\leq \alpha  \sup_{a \in A}\sup_{\PP_0 \in \mathcal{P}(x,a)} \left|\E_{\PP_0}\bigg[v(X_1)-w(X_1)\bigg]\right|\\
&\leq \alpha  \sup_{a \in A}\sup_{\PP_0 \in \mathcal{P}(x,a)} \E_{\PP_0}\bigg[\left|v(X_1)-w(X_1)\right|\bigg]\\
&\leq \alpha  \sup_{a \in A}\sup_{\PP_0  \in \mathcal{P}(x,a)} \E_{\PP_0}\bigg[\|v-w\|_{C_p}\left(1+\left\|X_1\right\|^p\right)\bigg]\\
&\leq \alpha   C_P \|v-w\|_{C_p}\cdot \left(1+\|x\|^p\right).
\end{align*}
{ Hence, we obtain} \eqref{eq_contraction}.
Now, let $v_0 \in C_p(\Omloc,\R)$. Then, since by Assumption~\ref{asu_2}~(iv) we have $0 < \alpha C_P<1$, $\T$ is a contraction on $C_p(\Omloc,\R)$. Hence Banach's { fixed} point theorem (Theorem~\ref{thm_banach}) implies existence and uniqueness of a fix point $v\in C_p(\Omloc,\R)$ such that $v = \T v = \lim_{n \rightarrow \infty} \T^n v_0$.
\end{proof}

\begin{proof}[Proof of Theorem~\ref{thm_main_result}~(iii)]
We first show the inequality $v(x) \leq V(x)$.\\
Let $v\in C_p(\Omloc,\R)$ be the { fixed} point of $\T$ whose existence and uniqueness were proved in part (ii), i.e., we have that $v = \T v$. Let ${\aloc}^* \in \Aloc$ be the minimizer from part (i) and write ${\ab}^*:=(\aloc^*(X_0),\aloc^*(X_1),\cdots)\in \mathcal{A}$, and let $x \in \Omloc$.  First, we claim it holds for all $n\in \N$ with $ n \geq 2$ that
\begin{equation}\label{eq_induction_claim_1}
\T ^n v(x) \leq  \inf_{\PP \in \mathfrak{P}_{x,\ab^*}} \E_{\PP}\bigg[\sum_{t=0}^{n-1} \alpha^tr(X_{t},{\aloc^*}(X_t),X_{t+1})+\alpha ^nv(X_n^{x})\bigg].
\end{equation}
We prove the claim \eqref{eq_induction_claim_1} inductively. To that end, note that by part~(i) we can write
\begin{align*}
\T v(x)&= \inf_{\PP_0 \in \mathbf{P}_{{\aloc}^*}} \E_{\PP_0(x)}\bigg[r(x,{\aloc}^*(x),X_1)+\alpha v(X_1)\bigg]\\
&=\inf_{\PP_0 \in \mathbf{P}_{{\aloc}^*} } \int_{\Omloc}r\left(x,{\aloc}^*(x),\omega_1\right)+\alpha v (\omega_1)~\PP_0(x;\D\omega_1).
\end{align*}
This implies for all $t\in \N_0$ and $\omega=(\omega_t)_{t\in \N_0} \in \Omega$ that
\begin{equation}
\begin{aligned}
\label{eq_T_a_equality}
\T v (X_{t}(\omega ))
&=\inf_{\PP_0\in \mathbf{P}_{{\aloc}^*}  } \int_{\Omloc}r\left(X_{t}(\omega ) ,{\aloc}^*(X_{t}(\omega )),\omega_1\right)+\alpha v\left(\omega_1\right)\PP_0 (X_{t}(\omega );\D \omega_1)\\
&=\inf_{\PP_t\in \mathbf{P}_{{\aloc}^*}  } \int_{\Omloc}r\left(\omega_t ,{\aloc}^*(\omega_t ),\omega_{t+1}\right)+\alpha v\left(\omega_{t+1}\right)\PP_t(\omega_t;\D\omega_{t+1}),
\end{aligned}
\end{equation}
where we just used the definition of the canonical process and relabelled the variables $\PP_0$ and $\omega_1$.
For $n=2$, as $\T v=v$, we thus have
\begin{equation}\label{eq_TTv_1}
\begin{aligned}
\T \left(\T v \right)(x)&= \T v(x)=\inf_{\PP_0 \in \mathbf{P}_{{\aloc}^*} } \int_{\Omloc}r\left(x,{\aloc}^*(x),\omega_1\right)+\alpha v(x)\PP_0(x;\D\omega_1)\\
&=\inf_{\PP_0 \in \mathbf{P}_{{\aloc}^*} } \int_{\Omloc}r\left(x,{\aloc}^*(x),\omega_1\right)+\alpha \T v(x)\PP_0(x;\D\omega_1)\\
&=\inf_{{\PP_0 \in\mathbf{P}_{{\aloc}^*} }} \int_{\Omloc}\bigg[ r\left(x,{\aloc}^*(x),\omega_1\right)\\
&\hspace{1cm}+\alpha \sup_{\aloc \in \Aloc}\inf_{{\PP_1  \in \mathbf{P}_{{\aloc}}}}\int_{\Omloc}\bigg\{r\left(\omega_1,{\aloc}(\omega_1),\omega_2\right)+\alpha v \left(\omega_2\right)\bigg\}~\PP_1(\omega_1; \D\omega_2)\bigg]\PP_0(x;\D\omega_1).
\end{aligned}
\end{equation}
This and part~(i) ensures that
\begin{align*}
\T \left(\T v \right)(x)&=\inf_{{\PP_0 \in\mathbf{P}_{{\aloc}^*} }} \int_{\Omloc}\bigg[ r\left(x,{\aloc}^*(x),\omega_1\right)\\
&\hspace{3cm}+\alpha \inf_{{\PP_1  \in \mathbf{P}_{{\aloc}^*}}}\int_{\Omloc}\bigg\{r\left(\omega_1,{\aloc}^*(\omega_1),\omega_2\right)+\alpha v \left(\omega_2\right)\bigg\}~\PP_1(\omega_1; \D\omega_2)\bigg]\PP_0(x;\D\omega_1)\\
&\leq \inf_{\PP_t  \in \mathbf{P}_{{\aloc}^*} ,\atop t=0,1} \int_{\Omloc}\int_{\Omloc} \sum_{t=0}^{1} \alpha^t r(\omega_t,{\aloc}^*(\omega_t ),\omega_{t+1})+\alpha^2 v\left(\omega_2\right) ~\PP_1(\omega_1;\D\omega_2) ~\PP_0(x;\D\omega_1)\\
&=\inf_{\PP \in \mathfrak{P}_{x,{\ab}^*}} \E_{\PP}\left[\sum_{t=0}^{1} \alpha^t r(X_{t},{\aloc}^*(X_t),X_{t+1})+\alpha^2 v\left(X_2\right)\right],
\end{align*}
where we use \eqref{eq_T_a_equality} and the structure of the measures $\mathfrak{P}_{x,\ab^*}$. The general case for arbitrary $n$ follows with analogue arguments. Indeed, let the claim in \eqref{eq_induction_claim_1} be true for $n-1$, then it follows by the same argument as in \eqref{eq_TTv_1} and by the structure of every $\PP \in \mathfrak{P}_{\omega_1,\ab^*}$ that
\begin{align*}
&\lefteqn{\T ^n v(x)=\T \left(\T ^{n-1} v \right)(x)}\\
&\leq \inf_{\PP_0\ \in \mathbf{P}_{{\aloc^*}}} \int_{\Omloc}\bigg[ r\left(x,{\aloc^*}(x),\omega_1\right)\\
&\hspace{1.2cm}+\alpha \inf_{\PP  \in \mathfrak{P}_{\omega_1,\ab^*}} \int_{\Omega} \bigg\{ \sum_{t=1}^{n-1} \alpha^{t-1} r(\omega_t,{\aloc^*}(\omega_t),\omega_{t+1})+\alpha^{n-1} v\left(\omega_n\right) \bigg\} \PP(\D\omega) \bigg]\PP_0(x;\D\omega_1) \\
&=\inf_{{\PP_0\ \in \mathbf{P}_{{\aloc^*}}}} \int_{\Omloc}\bigg[ r\left(x,{\aloc^*}(x),\omega_1\right)\\
&\hspace{1.2cm}+\alpha {\inf_{{\PP_t  \in \mathbf{P}_{{\aloc}^*} ,\atop t=1,\dots,n-1}}} \int_{\Omloc}\cdots \int_{\Omloc} \bigg\{ \sum_{t=1}^{n-1} \alpha^{t-1} r(\omega_t,{\aloc^*}(\omega_t),\omega_{t+1})\\
&\hspace{6cm}+\alpha^{n-1} v\left(\omega_n\right) \bigg\} \PP_{n-1}(\omega_{n-1};\D\omega_n)\cdots \PP_{1}(\omega_1;\D\omega_2)\bigg]\PP_0(x;\D\omega_1)\\
&\leq {\inf_{{\PP_t  \in \mathbf{P}_{{\aloc^*}} ,\atop t=0,\dots,n-1}}} \int_{\Omloc}\int_{\Omloc}\cdots \int_{\Omloc} \bigg\{ \sum_{t=0}^{n-1} \alpha^t r(\omega_t,{\aloc^*}(\omega_t),\omega_{t+1})\\
&\hspace{6cm}+\alpha^n v\left(\omega_n\right) \bigg\} \PP_{n-1}(\omega_{n-1};\D\omega_{n})\cdots \PP_{1}(\omega_1;\D\omega_2) \PP_0(x;\D\omega_1)\\
&=\inf_{\PP \in \mathfrak{P}_{x,{\ab^*}}} \E_{\PP}\bigg[\sum_{t=0}^{n-1} \alpha^tr(X_{t},{\aloc^*}(X_t),X_{t+1})+\alpha ^nv(X_n)\bigg].
\end{align*}
According to \eqref{eq_induction_claim_1} we have for all $n\in \N$ that
\begin{align}
v(x)=\T v(x)&=\T ^nv(x)\leq \inf_{\PP \in \mathfrak{P}_{x,\ab^*}} \E_{\PP}\bigg[\sum_{t=0}^{n-1} \alpha^tr(X_{t},{\aloc}^*(X_t),X_{t+1})+\alpha ^nv(X_n)\bigg].\label{eq_sum_inf_1}
\end{align}
Further, we obtain for all ${ \PP = \delta_x \otimes \PP_0 \otimes \PP_1 \cdots} \in \mathfrak{P}_{x,\ab^*}$ and $n\in \N$ by { \eqref{eq_defn_CP_norm} and }\eqref{eq_growth_constraint_on_p} that
%by the compactness of $\mathfrak{P}_{x,\ab}$ there exists a convergent subsequence of $(\PP_n)_{n \in \N}$, which is for sake of clarity denoted identically, such that  $\PP_n \rightarrow \PP^* \in \mathfrak{P}_{x,\ab}$, implying that 
\begin{align*}
\E_{\PP}\bigg[\left|v(X_n)\right|\bigg] &\leq { \E_{\PP}\bigg[\|v\|_{C_p} (1+\|X_n\|^p)\bigg]} \\
&{ =\|v\|_{C_p}  \int_{\Omloc} \cdots \int_{\Omloc} (1+\|\omega_n\|^p)    \PP_{n-1}(\omega_{n-1};\D\omega_{n})\cdots \PP_{1}(\omega_1;\D\omega_2) \PP_0(x;\D\omega_1)}\\
&{ \leq \|v\|_{C_p}  \int_{\Omloc} \cdots \int_{\Omloc} C_P(1+\|\omega_{n-1}\|^p)    \PP_{n-2}(\omega_{n-2};\D\omega_{n-1})\cdots \PP_{1}(\omega_1;\D\omega_2) \PP_0(x;\D\omega_1)}\\
&{ =}  \|v\|_{C_p}  \E_{\PP}\bigg[C_P(1+\|X_{n-1}\|^p)\bigg].
\end{align*}
{ Therefore, by repeating this argument, we obtain that
\[
\E_{\PP}\bigg[\left|v(X_n)\right|\bigg] \leq  \|v\|_{C_p}  \E_{\PP} \bigg[C_P(1+\|X_{n-1}\|^p)\bigg]  \leq  \cdots \leq  \|v\|_{C_p} C_P^n(1+\|x\|^p).
\]
}
Note that Assumption~\ref{asu_2} implies {  $0 < C_P \cdot \alpha  <1$}. When letting $n\rightarrow \infty$, we thus have for all $\PP \in \mathfrak{P}_{x,\ab}$ that 
\begin{equation}\label{eq_limsup_ineq_1}
0 \leq \limsup_{n \rightarrow \infty} \E_{\PP}\bigg[\alpha^n \left| v(X_n)\right|\bigg] \leq \|v\|_{C_p}(1+\|x\|^p) \cdot  \limsup_{n \rightarrow \infty} \left(C_P\cdot \alpha \right)^n =0.
\end{equation}
Moreover, note that by the growth condition on $r$ in Assumption~\ref{asu_2}~(iii) we have for each $n \in \N$ that
\begin{equation}\label{eq_dominating_function}
\sum_{t=0}^{n-1} \alpha^tr(X_{t},{\aloc}^*(X_t),X_{t+1}) \leq \sum_{t=0}^{\infty} \alpha^t C_r(1+\|X_t\|^p+\|X_{t+1}\|^p).
\end{equation}
Furthermore, for all ${ \PP \equiv \delta_x \otimes \PP_0 \otimes \PP_1 \cdots}\in \mathfrak{P}_{x,\ab^*}$, by using  \eqref{eq_growth_constraint_on_p} and Beppo Levi's theorem, we have that
\begin{equation}\label{eq_domin_convergence}
\begin{aligned}
\E_{\PP}&\bigg[\sum_{t=0}^{\infty} \alpha^tC_r (1+\|X_t\|^p+\|X_{t+1}\|^p)\bigg] \\&={  \sum_{t=0}^{\infty}  \E_{\PP}\bigg[ \alpha^tC_r (\|X_t\|^p+1+\|X_{t+1}\|^p)\bigg] }\\
&{ =  \sum_{t=0}^{\infty} \int_{\Omloc} \cdots \int_{\Omloc}  \alpha^tC_r (\|\omega_t\|^p+1+\|\omega_{t+1}\|^p)  \PP_{t}(\omega_{t};\D\omega_{t+1})\cdots \PP_{1}(\omega_1;\D\omega_2) \PP_0(x;\D\omega_1)}\\
&{\leq   \sum_{t=0}^{\infty} \int_{\Omloc} \cdots \int_{\Omloc}   \alpha^tC_r \bigg(\|\omega_t\|^p+C_P(1+\|\omega_{t}\|^p)\bigg)  \PP_{t-1}(\omega_{t-1};\D\omega_{t})\cdots \PP_{1}(\omega_1;\D\omega_2) \PP_0(x;\D\omega_1)}\\
&={  \sum_{t=0}^{\infty}  \E_{\PP}\bigg[ \alpha^tC_r \bigg(\|X_t\|^p+C_P(1+\|X_{t}\|^p)\bigg)\bigg] }.
\end{aligned}
\end{equation}
{ Recall that $\alpha \cdot C_P <1$ according to Assumption~\ref{asu_2}~(iv).}
Therefore, by repeating the same arguments { using \eqref{eq_growth_constraint_on_p}},  we obtain that
\begin{equation}\label{eq_domin_convergence_2}
\begin{aligned}
\E_{\PP}&\bigg[\sum_{t=0}^{\infty} \alpha^tC_r (1+\|X_t\|^p+\|X_{t+1}\|^p)\bigg] \\
%&\leq {  \sum_{t=0}^{\infty}  \E_{\PP}\bigg[ \alpha^tC_r \bigg(\|X_t\|^p+C_P(1+\|X_{t}\|^p)\bigg)\bigg] }\\
&\leq {  \sum_{t=0}^{\infty} \alpha^t C_r(1+C_P) \E_{\PP}\bigg[(1+\|X_t\|^p)\bigg] }\\
&{= \sum_{t=0}^{\infty} \alpha^t C_r(1+C_P)\int_{\Omloc} \cdots \int_{\Omloc} (1+\|\omega_t\|^p)\PP_{t-1}(\omega_{t-1};\D\omega_{t})\cdots \PP_{1}(\omega_1;\D\omega_2) \PP_0(x;\D\omega_1)}\\
&{\leq \sum_{t=0}^{\infty} \alpha^t C_r(1+C_P)\int_{\Omloc} \cdots \int_{\Omloc} C_P(1+\|\omega_{t-1}\|^p)\PP_{t-2}(\omega_{t-2};\D\omega_{t-1})\cdots \PP_{1}(\omega_1;\D\omega_2) \PP_0(x;\D\omega_1)}\\
&{= \sum_{t=0}^{\infty} \alpha^t C_r(1+C_P) C_P\E_{\PP} \left[(1+\|X_{t-1}\|^p)\right]}\\
&\leq \sum_{t=0}^{\infty} \alpha^t C_r(1+C_P){ C_P^t}(1+\|x\|^p)\\
&= C_r(1+C_P)\bigg[\sum_{t=0}^{\infty}  { ( \alpha \cdot C_P)^t}\bigg]\cdot (1+\|x\|^p)\\
&=\frac{{ C_r(1+C_P)}(1+\|x\|^p)}{{ 1-\alpha C_P}}<\infty.
\end{aligned}
\end{equation}
Hence the dominating function in \eqref{eq_dominating_function} is integrable and we obtain, by using the dominated convergence theorem and \eqref{eq_limsup_ineq_1}, that
\begin{equation}\label{eq_v_leq_V}
\begin{aligned}
v(x)&\leq \limsup_{n \rightarrow \infty}\inf_{\PP \in \mathfrak{P}_{x,\ab^*}} \E_{\PP}\bigg[\sum_{t=0}^{n-1} \alpha^tr(X_{t},{\aloc}^*(X_t),X_{t+1})+\alpha ^nv(X_n)\bigg]\\
&\leq  \inf_{\PP \in \mathfrak{P}_{x,\ab^*}} \limsup_{n \rightarrow \infty} \E_{\PP}\bigg[\sum_{t=0}^{n-1} \alpha^tr(X_{t},{\aloc}^*(X_t),X_{t+1})\bigg]+\limsup_{n \rightarrow \infty} \E_{\PP}\bigg[\alpha ^n\left|v(X_n)\right|\bigg]\\
&=\inf_{\PP \in \mathfrak{P}_{x,\ab^*}} \E_{\PP}\bigg[\sum_{t=0}^{\infty} \alpha^tr(X_{t},{\aloc}^*(X_t),X_{t+1})\bigg]
\leq  V(x).
\end{aligned}
\end{equation}
Next, we show the inequality $v(x) \geq V(x)$.
To this end, let $\PP_{0}^*:\Omloc \times A \rightarrow \mathcal{M}_1(\Omloc)$ be defined as in part (i) with respect to the unique { fixed} point $v\in C_p(\Omloc,\R)$ of $\T$. Moreover, for every $\ab=(a_t)_{t\in \N_0} \in \A
$ let $\PP_{x,\ab}^*:=\delta_x\otimes \PP_{a_0}^* \otimes \PP_{a_1}^*  \otimes \cdots$, where for $t\in \N$ we define $\PP_{a_t}^*:\Omloc \ni \omega_t \mapsto \PP_{0}^*(\omega_t ,a_t(\omega_t ))\in \mathcal{P}(\omega_t,a_t(\omega_t))$. 
%
%Note that Berge's maximum theorem and the measurable maximum theorem imply, with the same argumentation that was used in the proof of part (i) the existence of some $\widetilde{\aloc} \in \Aloc$, such that for all $y \in \Omloc$
%\[
%\sup_{\aloc \in \Aloc} \int_{\Omloc}  r(y,\aloc(y),y') \PP_{0}^* (y,{\aloc} (y); \D y') = \int_{\Omloc}  r(y,\widetilde{\aloc}(y),y') \PP_{0}^* (y,\widetilde{\aloc}(y); \D y')
%\]
 Thus, we have, by using the dominated convergence theorem with the same dominating function as in \eqref{eq_dominating_function}, that
\begin{equation}\label{eq_V_smaller_v}
\begin{aligned}
V(x)&= \sup_{\ab\in \mathcal{A}}\inf_{\PP \in \mathfrak{P}_{x,\ab}} \E_{\PP}\bigg[\sum_{t=0}^\infty \alpha^tr(X_t,a_t(X_t),X_{t+1})\bigg] \\
&\leq  \sup_{\ab\in \mathcal{A}}\E_{\PP_{x,\ab}^*}\bigg[\sum_{t=0}^\infty \alpha^tr(X_t,a_t(X_t),X_{t+1})\bigg]\\
&=\sup_{\ab\in \mathcal{A}}\sum_{t=0}^\infty \E_{\PP_{x,\ab}^*}\bigg[\alpha^t r(X_t,a_t(X_t),X_{t+1})\bigg]\\
&=\sup_{\ab\in \mathcal{A}}\sum_{t=0}^\infty \bigg(\alpha^t \E_{\PP_{x,\ab}^*}\bigg[r(X_t,a_t(X_t),X_{t+1})+\alpha v(X_{t+1})\bigg]-\E_{\PP_{x,\ab}^*}\bigg[\alpha^{t+1}v(X_{t+1})\bigg]\bigg)\\
&= \sup_{\ab\in \mathcal{A}}\sum_{t=0}^\infty \bigg(\alpha^t \int_{\Omloc} \cdots \int_{\Omloc} r(\omega_t,a_t(\omega_t),\omega_{t+1})+\alpha v(\omega_{t+1})\PP_{0}^*(\omega_t,a_t(\omega_t);\D \omega_{t+1})\cdots \PP_{0}^*(x,a_0(x);\D \omega_1) \\
&\hspace{8.6cm}-\E_{\PP_{x,\ab}^*}\bigg[\alpha^{t+1}v(X_{t+1})\bigg] \bigg).
\end{aligned}
\end{equation}
Moreover, by using the results from part~(i) we have for all $\omega_t \in \Omloc$
\begin{align}
&\int_{\Omloc} r(\omega_t,a_t(\omega_t),\omega_{t+1})+\alpha v(\omega_{t+1})\PP_{0}^*(\omega_t,a_t(\omega_t);\D \omega_{t+1}) \notag \\
&= \inf_{\PP_0 \in \mathcal{P}(\omega_t, a_t(\omega_t))} \int_{\Omloc} r(\omega_t,a_t(\omega_t),\omega_{t+1})+\alpha v(\omega_{t+1})\PP_{0}(\omega_t,a_t(\omega_t);\D \omega_{t+1})  \notag  \\
&\leq \sup_{\aloc \in \Aloc}\inf_{\PP_0 \in \mathcal{P}(\omega_t, \aloc(\omega_t))} \int_{\Omloc} r(\omega_t,\aloc(\omega_t),\omega_{t+1})+\alpha v(\omega_{t+1})\PP_{0}(\omega_t,\aloc(\omega_t);\D \omega_{t+1}) \notag \\
&=\int_{\Omloc} r(\omega_t,\aloc^*(\omega_t),\omega_{t+1})+\alpha v(\omega_{t+1})\PP_{0}^*(\omega_t,\aloc^*(\omega_t);\D \omega_{t+1}) =\T v(\omega_t)=v(\omega_t). \label{eq_lastline_integrals}
\end{align} 
Hence, we obtain with \eqref{eq_V_smaller_v} and \eqref{eq_lastline_integrals} that
\begin{equation*}
\begin{aligned}
V(x) &\leq \sup_{\ab\in \mathcal{A}}\sum_{t=0}^\infty \bigg(\alpha^t \int_{\Omloc} \cdots \int_{\Omloc} v(\omega_t)\PP_{0}^*(\omega_{t-1},a_{t-1}(\omega_{t-1});\D \omega_{t})\cdots \PP_{0}^*(x,a_0(x);\D \omega_1)\\
&\hspace{8cm}-\E_{\PP_{x,\ab}^*}\bigg[\alpha^{t+1}v(X_{t+1})\bigg] \bigg)\\
&= \sup_{\ab\in \mathcal{A}} \sum_{t=0}^\infty \left(\alpha^{t}\E_{\PP_{x,\ab}^*}\bigg[v(X_{t})\bigg]-\alpha^{t+1}\E_{\PP_{x,\ab}^*}\bigg[v(X_{t+1})\bigg]\right) \\
&=\sup_{\ab\in \mathcal{A}} v(x)=v(x).
\end{aligned}
\end{equation*}
This shows $V(x)=v(x)$. Eventually, to see that the first line of \eqref{eq_thm_assertion_3} holds, we compute by using \eqref{eq_lastline_integrals}, the definition $\PP_x^*:=\delta_x \otimes \PP_{\operatorname{loc}}^* \otimes \PP_{\operatorname{loc}}^* \otimes \cdots$ as well as the dominated convergence theorem that
\begin{align}
v(x) &= \sum_{t=0}^\infty \left(\alpha^{t}\E_{\PP_{x}^*}\bigg[v(X_{t})\bigg]-\alpha^{t+1}\E_{\PP_{x}^*}\bigg[v(X_{t+1})\bigg]\right) \notag \\ 
&= \sum_{t=0}^\infty \left(\alpha^{t}\E_{\PP_{x}^*}\bigg[r(X_t,\aloc^*(X_t),X_{t+1})+\alpha v(X_{t+1})\bigg]-\alpha^{t+1}\E_{\PP_{x}^*}\bigg[v(X_{t+1})\bigg]\right) \notag \\
&= \sum_{t=0}^\infty \E_{\PP_{x}^*}\bigg[\alpha^{t}r(X_t,\aloc^*(X_t),X_{t+1})\bigg]= \E_{\PP_{x}^*}\bigg[\sum_{t=0}^\infty \alpha^{t}r(X_t,\aloc^*(X_t),X_{t+1})\bigg].  \notag 
\end{align}
Moreover, by \eqref{eq_v_leq_V}, as we have shown that $V=v$, we obtain that 
\begin{equation}
V(x)=\inf_{\PP \in \mathfrak{P}_{x,\ab^*}}\E_{\PP}\bigg[\sum_{t=0}^\infty \alpha^tr(X_{t},\aloc^*(X_t),X_{t+1})\bigg].
\end{equation}
\end{proof}

\section{Proof of Results in Section~\ref{sec_distributional_uncertainty}}
\label{sec_proofs_sec3}
\subsection{Proof of Results in Section~\ref{sec_wasserstein}}

\begin{proof}[Proof of Proposition~\ref{prop_wasserstein}]
Let $x\in \Omloc$, $a \in A$.\\
We see that $\mathcal{P}(x,a)$ is nonempty since the measure $\widehat{\PP}(x,a)$ is contained in $\mathcal{P}(x,a)$ by definition of the $q$-Wasserstein-ball. 

The compactness of $\mathcal{B}^{(q)}_\varepsilon\left(\widehat{\PP}(x,a)\right)$ with respect to $\tau_0$, which is the topology induced by the weak convergence of measures, follows from, e.g., \cite[Theorem 1]{yue2020linear}, where we use the assumption that $\widehat{\PP}(x,a)$ has finite $q$-th moments. 

To show the upper hemicontinuity of $\mathcal{P}$, we apply Lemma~\ref{lem_upper_hemi}. Let $(x^{(n)},a^{(n)})_{n\in \N} \subseteq \Omloc  \times A$ such that $(x^{(n)},a^{(n)}) \rightarrow (x,a)\in \Omloc \times A$ for $n \rightarrow \infty$. Further, consider a sequence $(\PP^{(n)})_{n \in \N}$ such that $\PP^{(n)} \in  \mathcal{B}^{(q)}_\varepsilon\left(\widehat{\PP}(x^{(n)},a^{(n)})\right)$ for all $n \in \N$, i.e., we have $\left(\left(x^{(n)},a^{(n)}\right),\PP^{(n)}\right)_{n \in \N} \subseteq \operatorname{Gr} \mathcal{P}$. 

Let $(\delta_n)_{n \in \N} \subseteq (0,1)$ with $\lim_{m \rightarrow \infty} \delta_n = 0$. Note that, since $\Omloc \times A \ni (x,a) \mapsto \widehat{\PP}(x,a)$ is, by assumption, continuous in $\tau_q$ we have $\lim_{n \rightarrow \infty} W_q\left(\widehat{\PP}(x,a), \widehat{\PP}(x^{(n)},a^{(n)})\right) =0$. Hence, there exists a subsequence $(\widehat{\PP}(x^{n_k)},a^{(n_k)})_{k \in \N}$ such that 
\begin{equation}\label{eq_proof_uhc_1}
 W_q\left(\widehat{\PP}(x,a), \widehat{\PP}(x^{(n_k)},a^{(n_k)})\right) < \delta_k \cdot \varepsilon \text{ for all } k \in \N.
\end{equation}
This implies for each $\PP^{(n_k)}$, $k \in \N$, that 
\begin{align*}
W_q\left(\widehat{\PP}(x,a), \PP^{(n_k)}\right)\leq W_q\left(\widehat{\PP}(x,a), \widehat{\PP}(x^{(n_k)},a^{(n_k)}) \right)+W_q\left( \widehat{\PP}(x^{(n_k)},a^{(n_k)}), \PP^{(n_k)}\right) \leq \delta_k \cdot \varepsilon+\varepsilon \leq 2 \varepsilon.
\end{align*}
Hence, $\PP^{(n_k)} \in \mathcal{B}_{2\varepsilon}^{(q)}(\widehat{\PP}(x,a))$ for all $k \in \N$. By the compactness of $\mathcal{B}_{2\varepsilon}^{(q)}(\widehat{\PP}(x,a))$ in $\tau_0$, there exists a subsequence $(\PP^{(n_{k_\ell})})_{\ell \in \N}$ such that $\PP^{(n_{k_\ell})} \xrightarrow{\tau_0} \PP$ as $\ell \rightarrow \infty$ for some $\PP \in  \mathcal{B}_{2\varepsilon}^{(q)}(\widehat{\PP}(x,a))$ . In particular, since by assumption $\widehat{\PP}(x,a)$ possesses finite $q$-th moments, $\PP$ has also finite $q$-th moments, see \cite[Lemma 1]{yue2020linear}. It remains to prove that $\PP \in \mathcal{B}^{(q)}_{\varepsilon}(\widehat{\PP}(x,a))$. To that end, define for each $k \in \N$ 
\begin{equation}\label{eq_proof_uhc_2}
\widetilde{\PP}^{(n_k)}:= (1-\delta_k) \cdot \PP^{(n_k)} + \delta_k \cdot \widehat{\PP}(x^{(n_k)},a^{(n_k)}).
\end{equation}
Then, for each $k \in \N$ we have
\begin{equation}\label{eq_proof_uhc_3}
\begin{aligned}
W_q&\left(\widehat{\PP}(x^{(n_k)},a^{(n_k)}), \widetilde{\PP}^{(n_k)}\right) \\
&= W_q\left((1-\delta_k) \cdot \widehat{\PP}(x^{(n_k)},a^{(n_k)})+\delta_k \cdot \widehat{\PP}(x^{(n_k)},a^{(n_k)}),
(1-\delta_k) \cdot \PP^{(n_k)} + \delta_k \cdot \widehat{\PP}(x^{(n_k)},a^{(n_k)})\right)\\
&=(1-\delta_k) \cdot  W_q\left(\widehat{\PP}(x^{(n_k)},a^{(n_k)}),\PP^{(n_k)}\right) \leq (1-\delta_k) \cdot \varepsilon.
\end{aligned}
\end{equation}
Therefore, by \eqref{eq_proof_uhc_1} and \eqref{eq_proof_uhc_3} we have for each $\ell \in \N$ that
\begin{equation}\label{eq_proof_uhc_4}
\begin{aligned}
W_q\left(\widehat{\PP}(x,a), \widetilde{\PP}^{({n_k}_\ell)}\right)&\leq W_q\left(\widehat{\PP}(x,a), \widehat{\PP}(x^{({n_k}_\ell)},a^{({n_k}_\ell)})\right)+W_q\left(\widehat{\PP}(x^{({n_k}_\ell)},a^{({n_k}_\ell)}), \widetilde{\PP}^{({n_k}_\ell)}\right)\\
&\leq \delta_{k_{\ell}}\cdot \varepsilon+(1-\delta_{k_\ell})\cdot \varepsilon=\varepsilon.
\end{aligned}
\end{equation}
Furthermore, we have by \eqref{eq_proof_uhc_2} that 
\begin{equation}\label{eq_proof_uhc_5}
\lim_{\ell \rightarrow \infty}\widetilde{\PP}^{({n_k}_\ell)} = \lim_{\ell \rightarrow \infty} {\PP}^{({n_k}_\ell)} = \PP \text{ in } \tau_0.
\end{equation}
Since $\mu \mapsto W_q(\widehat{\PP}(x,a),\mu)$ is lower semicontinuous in $\tau_0$, see \cite[Corollary 5.3]{clement2008wasserstein}, we obtain from \eqref{eq_proof_uhc_4} and \eqref{eq_proof_uhc_5} that 
\[
W_q\left(\widehat{\PP}(x,a),\PP\right) \leq \liminf_{\ell \rightarrow \infty} W_q\left(\widehat{\PP}(x,a), \widetilde{\PP}^{({n_k}_\ell)}\right) \leq \varepsilon,
\]
and hence $\PP \in \mathcal{B}_{\varepsilon}^{(q)}(\widehat{\PP}(x,a))$.
The assertion that $\mathcal{P}$ is upper hemicontinuous follows now with the characterization of upper hemicontinuity provided in Lemma~\ref{lem_upper_hemi}.

To show the lower hemicontinuity of $\mathcal{P}$ we first define the set-valued map 
\[
\accentset{\circ}{\mathcal{P}}:\Omloc \times A \ni (x,a) \twoheadrightarrow  \accentset{\circ}{\mathcal{B}}_{\varepsilon}^{(q)}(\widehat{\PP}(x,a)):=\left\{\PP\in \mathcal{M}_1(\Omloc)~\middle|~W_q(\PP,\widehat{\PP}(x,a))< \varepsilon \right\}
\]
and conclude the lower hemicontinuity of  $\accentset{\circ}{\mathcal{P}}$ with Lemma~\ref{lem_lower_hemi}.
 To this end, we consider a sequence $(x^{(n)},a^{(n)})_{n\in \N} \subset \Omloc  \times A$ such that $(x^{(n)},a^{(n)}) \rightarrow (x,a)\in \Omloc \times A$ for $n \rightarrow \infty$, and we consider some $\PP \in \accentset{\circ}{\mathcal{P}}\left((x,a)\right)=\accentset{\circ}{\mathcal{B}}_{\varepsilon}^{(q)}(\widehat{\PP}(x,a))$. Note that since $\accentset{\circ}{\mathcal{B}}_{\varepsilon}^{(q)}(\widehat{\PP}(x,a))$ is defined as an open ball with respect to $\tau_q$, there exists some $0< \delta< \varepsilon$ such that $\PP \in \accentset{\circ}{\mathcal{B}}_{\varepsilon-\delta}^{(q)}(\widehat{\PP}(x,a))$. We define for $n \in \N$ the measure 
\[
\PP^{(n)}:=
\begin{cases}
\widehat{\PP}(x^{(n)},a^{(n)}),&\text{if } W_q\left(\widehat{\PP}(x^{(n)},a^{(n)}),\widehat{\PP}(x,a)\right) \geq \delta\\
\PP, &\text{else}.
\end{cases}
\]
Then, we claim that $\PP^{(n)} \in \accentset{\circ}{\mathcal{P}}\left((x^{(n)},a^{(n)})\right)$ for all $n \in \N$. Indeed, if $W_q\left(\widehat{\PP}(x^{(n)},a^{(n)}),\widehat{\PP}(x,a)\right) \geq \delta$ this follows by definition of $\PP^{(n)}$, whereas if $W_q\left(\widehat{\PP}(x^{(n)},a^{(n)}),\widehat{\PP}(x,a)\right) < \delta$, then  $\PP^{(n)}=\PP$, and hence by the triangle inequality
\[
W_q\left(\PP,\widehat{\PP}(x^{(n)},a^{(n)})\right)\leq W_q\left(\PP,\widehat{\PP}(x,a)\right)+W_q\left(\widehat{\PP}(x,a),\widehat{\PP}(x^{(n)},a^{(n)})\right) < (\varepsilon-\delta)+\delta=\varepsilon.
\]
By the continuity of $(x,a) \mapsto \widehat{\PP}(x,a)$ in $\tau_q$ we have that $\widehat{\PP}(x^{(n)},a^{(n)}) \xrightarrow{\tau_q} \widehat{\PP}(x,a)$ as $n \rightarrow \infty$. Thus, there exists some $N \in \N$ such that we have $\PP^{(n)}=\PP$ for all $n \geq N$ and thus, in particular $\PP^{(n)} \rightarrow \PP$ weakly for $n \rightarrow \infty$, which concludes the lower hemicontinuity of $\accentset{\circ}{\mathcal{P}}$ with Lemma~\ref{lem_lower_hemi}. Next, we claim that the $\tau_0$-closure of $\accentset{\circ}{\mathcal{B}}_{\varepsilon}^{(q)}(\widehat{\PP}(x,a))$, denoted by $\operatorname{cl}_{\tau_0}\left(\accentset{\circ}{\mathcal{B}}_{\varepsilon}^{(q)}(\widehat{\PP}(x,a))\right)$, coincides with ${\mathcal{B}_{\varepsilon}}^{(q)}(\widehat{\PP}(x,a))$. Indeed, the inclusion ${\mathcal{B}_{\varepsilon}}^{(q)}(\widehat{\PP}(x,a)) \subseteq \operatorname{cl}_{\tau_0}\left(\accentset{\circ}{\mathcal{B}}_{\varepsilon}^{(q)}(\widehat{\PP}(x,a))\right)$ follows, since $\operatorname{cl}_{\tau_0}\left(\accentset{\circ}{\mathcal{B}}_{\varepsilon}^{(q)}(\widehat{\PP}(x,a))\right)$ is closed in $\tau_0$ and hence also in $\tau_q$. To show the reverse inclusion $\operatorname{cl}_{\tau_0}\left(\accentset{\circ}{\mathcal{B}}_{\varepsilon}^{(q)}(\widehat{\PP}(x,a))\right) \subseteq {\mathcal{B}_{\varepsilon}}^{(q)}(\widehat{\PP}(x,a))$ let $\PP \in \operatorname{cl}_{\tau_0}\left(\accentset{\circ}{\mathcal{B}}_{\varepsilon}^{(q)}(\widehat{\PP}(x,a))\right)$. Then, there exists a sequence $(\PP^{(n)})_{n \in \N} \subseteq \accentset{\circ}{\mathcal{B}}_{\varepsilon}^{(q)}(\widehat{\PP}(x,a))$ with $\PP^{(n)}\xrightarrow{\tau_0} \PP$ as $n \rightarrow \infty$. Hence by using the lower semicontinuity of $\mu \mapsto W_q(\mu,\widehat{\PP}(x,a))$ with respect to $\tau_0$ we obtain
\[
W_q\left(\PP, \widehat{\PP}(x,a)\right) \leq \liminf_{n \rightarrow \infty} W_q\left(\PP^{(n)},\widehat{\PP}(x,a)\right)\leq \varepsilon.
\]
Hence,  $\operatorname{cl}_{\tau_0}\left(\accentset{\circ}{\mathcal{B}}_{\varepsilon}^{(q)}(\widehat{\PP}(x,a))\right)={\mathcal{B}_{\varepsilon}}^{(q)}(\widehat{\PP}(x,a))$ and \cite[Lemma 17.22]{Aliprantis} implies that the set-valued map $\mathcal{P}:\Omloc \times A \ni (x,a) \twoheadrightarrow \operatorname{cl}_{\tau_0}\left(\accentset{\circ}{\mathcal{B}}_{\varepsilon}^{(q)}(\widehat{\PP}(x,a))\right)$ is lower hemicontinuous.

Eventually, since $p=0$, the growth constraint \eqref{eq_growth_constraint_on_p} is automatically fulfilled.
\end{proof}
\subsection{Proof of Results in Section~\ref{sec_parametric}}

\begin{proof}[Proof of Propositon~\ref{prop_knightian_1}]
Let $(x,a) \in \Omloc \times A$.\\
The nonemptiness of $\mathcal{P}(x,a)$ follows directly since $\Theta$ is nonempty.

To show the compactness of $\mathcal{P}(x,a)$ let $\left(\PP^{(n)}\right)_{n \in \N} \subseteq \mathcal{P}(x,a)$, i.e., for all $n \in \N$ we have $\PP^{(n)} = \widehat{\PP}(x,a, \theta^{(n)})$ for some $\theta^{(n)} \in \Theta(x,a)$. The compactness of $\Theta(x,a)$ implies the existence of a subsequence $(\theta^{(n_k)})_{k \in \N} \subseteq \Theta(x,a)$ such that $\theta^{(n_k)} \rightarrow \theta \in \Theta(x,a)$ for $k \rightarrow \infty$. Hence, since $\widehat{\PP}$ is continuous, it follows $\widehat{\PP}(x,a,\theta^{(n_k)}) \rightarrow \widehat{\PP}(x,a,\theta) \in \mathcal{P}(x,a)$ in $\tau_p$ for $k \rightarrow \infty$ .

We apply Lemma~\ref{lem_upper_hemi} to show the upper hemicontinuity of $\mathcal{P}$. To this end, consider a sequence $(x^{(n)},a^{(n)})_{n \in \N}\subseteq \Omloc \times A$ with $\lim_{n \rightarrow \infty}(x^{(n)},a^{(n)}) = (x,a)$ and a sequence $(\PP^{(n)})_{n \in \N}$ with $\PP^{(n)} \in \mathcal{P}(x^{(n)},a^{(n)})$ for all $n \in \N$. We have a representation $\PP^{(n)}=\widehat{\PP}(x^{(n)},a^{(n)},\theta^{(n)})$ for some $\theta^{(n)}\in \Theta(x^{(n)},a^{(n)})$ for all $n \in \N$. Then, since $\Theta$ is upper hemicontinuous, there exists a subsequence $(\theta^{(n_k)})_{k \in \N}$ with $\theta^{(n_k)} \in \Theta(x^{(n_k)},a^{(n_k)})$ for all $k \in \N$ such that $\theta^{(n_k)} \rightarrow \theta$ for $k \rightarrow \infty$ for some $\theta \in \Theta(x,a)$. Hence with the continuity of $\widehat{\PP}$ it follows $\PP^{(n_k)} \rightarrow \PP:=\widehat{\PP}(x,a,\theta)\in \mathcal{P}(x,a)$ in $\tau_p$ for $k \rightarrow \infty$.

To show the lower hemicontinuity we let $(x^{(n)},a^{(n)})_{n \in \N}\subseteq \Omloc \times A$ with $\lim_{n \rightarrow \infty}(x^{(n)},a^{(n)}) = (x,a)$ and $\PP:=\widehat{\PP}(x,a,\theta) \in \mathcal{P}(x,a)$ for some $\theta \in \Theta(x,a)$. Then, the lower hemicontinuity of $\Theta$ implies the existence of a subsequence $(x^{(n_k)},a^{(n_k)})_{k \in \N}$ and of a sequence $(\theta^{(k)})_{k \in \N}$ with $\theta^{(k)} \in \Theta(x^{(n_k)},a^{(n_k)})$ for all $k \in \N$ such that $\theta^{(k)} \rightarrow \theta$ for $k \rightarrow \infty$. Hence, it follows with the continuity of $\widehat{\PP}$ that $\mathcal{P}\left(x^{(n_k)},a^{(n_k)}\right)\ni \PP^{(k)}:=\widehat{\PP}(x^{(n_k)},a^{(n_k)},\theta^{(k)}) \rightarrow \PP$ for $k \rightarrow \infty$, implying with Lemma~\ref{lem_lower_hemi} the lower hemicontinuity of $\mathcal{P}$.

\end{proof}

\subsection{Proof of Results in Section~\ref{sec_uncertainty_autocorr}}
Before reporting the proof of Proposition~\ref{prop_auto_correlation}, we establish the following lemma.
\begin{lem}\label{lem_continuity_dirac_plus_measure}
{ Let $D \in \N$ and let $Z \subseteq \R^D$ be closed. Moreover, }let $\mathcal{D}:= \left\{\delta_x~\middle|~x\in { Z}^{m-1}\right\} \subseteq \left(\mathcal{M}_1({ Z}^{m-1}), \tau_p \right)$ be the closed subset consisting of all Dirac measures on ${ Z}^{m-1}$. Then, for any $p\in \{0,1\}$, the map
\begin{align*}
\varphi:\left(\mathcal{D},\tau_p \right) \times \left(\mathcal{M}_1({ Z}),\tau_p \right) &\rightarrow \left(\mathcal{M}_1({ Z}^m),\tau_p \right)\\
\left(\delta_x,\PP \right) &\mapsto \delta_x \otimes \PP 
\end{align*}
is continuous.
\end{lem}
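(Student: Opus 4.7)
\bigskip

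\noindent \textbf{Proof plan.} Since the Wasserstein-$p$ topology is metrizable, it suffices to check sequential continuity. The plan is to take an arbitrary sequence $(\delta_{x_n},\PP_n) \to (\delta_x,\PP)$ in $\tau_p \times \tau_p$ and verify characterization \eqref{eq_convergence_topology_1} for the image measures $\delta_{x_n}\otimes \PP_n$. First I observe that $\tau_p$-convergence of Dirac measures forces $x_n \to x$ in $Z^{m-1}$: since $\tau_p$ refines weak convergence, $\delta_{x_n}\to\delta_x$ weakly, and applying this to coordinate projections (which are continuous and bounded on compacts, but one can test against cutoff bounded continuous functions separating points) yields $x_n \to x$. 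In particular $(\|x_n\|)_n$ is bounded.

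Now fix any $h \in C_p(Z^m,\R)$. By the product measure construction,
\begin{equation*}
\int_{Z^m} h\, \D(\delta_{x_n}\otimes \PP_n) - \int_{Z^m} h\, \D(\delta_{x}\otimes \PP)
= \underbrace{\int_Z [h(x_n,y)-h(x,y)]\, \PP_n(\D y)}_{=:\, I_n}
+ \underbrace{\int_Z h(x,y)\,(\PP_n-\PP)(\D y)}_{=:\, II_n}.
\end{equation*}
The second term is the easy one: the partial evaluation $y \mapsto h(x,y)$ is continuous on $Z$, and the polynomial growth bound $|h(u,y)| \le K(1+\|(u,y)\|^p)$ combined with $\|x\|$ being fixed yields $y\mapsto h(x,y) \in C_p(Z,\R)$. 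Hence $II_n \to 0$ directly from $\PP_n \xrightarrow{\tau_p} \PP$ via \eqref{eq_convergence_topology_1}.

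The main obstacle lies in $I_n$, where $h$ is only continuous (not uniformly so) and, in the case $p=1$, the integrand is unbounded. I intend to handle both cases $p\in\{0,1\}$ simultaneously by a tightness plus uniform-integrability argument. Given $\epsilon>0$, I pick a compact set $K\subseteq Z$ such that $\sup_n \PP_n(Z\setminus K) < \epsilon$, which is possible since weak convergence of $(\PP_n)_n$ on the Polish space $Z$ entails tightness (Prokhorov); if $p=1$, I additionally require $\sup_n \int_{Z\setminus K} (1+\|y\|)\,\PP_n(\D y) < \epsilon$, which is feasible because $\tau_1$-convergence is equivalent to weak convergence together with convergence of first moments, and the latter implies uniform integrability of the family $\{(1+\|y\|)\PP_n\}_n$ (de la Vall\'ee Poussin). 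Since the set $(\{x_n:n\in\N\}\cup\{x\})\times K$ is compact in $Z^m$, the continuous function $h$ is uniformly continuous on it, so $\sup_{y\in K}|h(x_n,y)-h(x,y)|\to 0$ as $n\to\infty$. Outside $K$, the polynomial-growth bound on $h$ and the boundedness of $(\|x_n\|)_n$ yield $|h(x_n,y)-h(x,y)|\le C(1+\|y\|^p)$ uniformly in $n$, so $\int_{Z\setminus K}|h(x_n,y)-h(x,y)|\,\PP_n(\D y) < 2C\epsilon$. Letting $n\to\infty$ and then $\epsilon\to 0$ gives $I_n\to 0$, which together with $II_n\to 0$ establishes \eqref{eq_convergence_topology_1} and hence the claimed continuity of $\varphi$.
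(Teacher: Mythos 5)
Your proof is correct, but it takes a genuinely different route from the paper's. You verify the defining convergence \eqref{eq_convergence_topology_1} head-on for an arbitrary test function $h\in C_p(Z^m,\R)$, and handle the first-variable increment $I_n$ by localization: tightness of $(\PP_n)_n$ via Prokhorov for $p=0$, supplemented by uniform integrability of the first moments for $p=1$, lets you exploit uniform continuity of $h$ on the compact set $(\{x_n\}\cup\{x\})\times K$ and the polynomial growth bound on the complement of $K$. The paper avoids any tightness argument: for $p=0$ it tests only against Lipschitz functions $f$ (invoking the characterization of weak convergence by bounded Lipschitz test functions), for which $|f(x^{(n)},z)-f(x,z)|\le L\,\|x^{(n)}-x\|$ uniformly in $z$, so the first-variable increment is controlled trivially; it then upgrades to $\tau_1$ by separately checking convergence of the first moment $\int\|(y,z)\|\,\D\bigl(\delta_{x^{(n)}}\otimes\PP^{(n)}\bigr)$ (again a $1$-Lipschitz estimate in the first variable) and citing the equivalence of $\tau_1$-convergence with weak convergence plus convergence of first moments. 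Your argument is more self-contained in that it works directly with the definition of $\tau_p$ and treats $p=0$ and $p=1$ in one unified pass, at the price of importing Prokhorov's theorem and a uniform-integrability step; the paper's is shorter because the reduction to Lipschitz test functions makes the $x$-increment estimate uniform in the second variable for free. Both arguments rely on the (correct, though in your write-up only sketched) fact that $\delta_{x^{(n)}}\to\delta_x$ weakly forces $x^{(n)}\to x$; a clean way to see it is to test against the bounded continuous function $y\mapsto\min(\|y-x\|,1)$.
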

\begin{proof}
{ We show the sequential continuity of the map $\varphi$.} First, we consider the case $p=0$. Let { $\delta_x \in \mathcal{D}$ for some $x\in Z^{m-1}$ and $\PP \in \mathcal{M}_1(Z)$, and let }$(\delta_{x^{(n)}})_{n \in \N} \subseteq \mathcal{D}$ and $\left(\PP^{(n)}\right)_{n \in \N} \subseteq \mathcal{M}_1({ Z})$ with 
 $\delta_{x^{(n)}} \xrightarrow{\tau_0} { \delta_x \in \mathcal{D}}$ and $\PP^{(n)} \xrightarrow{\tau_0} \PP\in \mathcal{M}_1({ Z})$ for $n \rightarrow \infty$. Now, let $f:{ Z}^m \rightarrow \R$ be Lipschitz continuous with Lipschitz constant $L >0$. Then we have
\begin{align}
&\lim_{n \rightarrow \infty} \left| \int_{{ Z}^m} f(y,z) \delta_{x^{(n)}}(\D y) \otimes \PP^{(n)}(\D z) -\int_{{ Z}^m} f(y,z) \delta_{x}(\D y) \otimes \PP(\D z)\right| \notag \\
=&\lim_{n \rightarrow \infty} \left| \int_{{ Z}} f(x^{(n)},z) \PP^{(n)}(\D z) -\int_{{ Z}} f(x,z) \PP(\D z)\right|  \notag \\
\leq & \lim_{n \rightarrow \infty} \left( \int_{{ Z}} \left|f(x^{(n)},z) -f(x,z) \right|\PP^{(n)}(\D z) +\left|\int_{{ Z}} f(x,z)\PP^{(n)}(\D z) - \int_{{ Z}} f(x,z) \PP(\D z)\right| \right)  \notag \\
\leq & \lim_{n \rightarrow \infty}  L \cdot \|x^{(n)}-x\| +\lim_{n \rightarrow \infty} \left|\int_{{ Z}} f(x,z)\PP^{(n)}(\D z) - \int_{{ Z}} f(x,z) \PP(\D z)\right|  = 0, \label{eq_proof_lemma_dirac_eq_1}
\end{align}
where the second summand in \eqref{eq_proof_lemma_dirac_eq_1} vanishes due to $\PP^{(n)} \xrightarrow{\tau_0} \PP$.
By \cite[Theorem 18.7]{jacod_protter} we conclude  that $\varphi$ is { (sequential)} continuous.

%Since convergence in $\tau_1$ implies convergence in $\tau_0$, we also have that 
%\begin{equation}\label{eq_continuity_map_t0_t1}
%\begin{aligned}
%\left(\mathcal{D},\tau_1 \right) \times \left(\mathcal{M}_1(T),\tau_1 \right) &\rightarrow \left(\mathcal{M}_1(T^m),\tau_0 \right)\\
%\left(\delta_x,\PP \right) &\mapsto \delta_x \otimes \PP 
%\end{aligned}
%\end{equation}
%is continuous.\\
Now, we consider the case $p=1$. 
Let { $\delta_x \in \mathcal{D}$ for some $x\in Z^{m-1}$ and $\PP \in \mathcal{M}_1(Z)$, and let }$(\delta_{x^{(n)}})_{n \in \N} \subseteq \mathcal{D}$ and $\left(\PP^{(n)}\right)_{n \in \N} \subseteq \mathcal{M}_1({ Z})$ with 
 $\delta_{x^{(n)}} \xrightarrow{\tau_1} \delta_x \in \mathcal{D}$ and $\PP^{(n)} \xrightarrow{\tau_1} \PP \in \mathcal{M}_1({ Z})$ for $n \rightarrow \infty$. Since convergence in $\tau_1$ implies convergence in $\tau_0$,  we obtain, by the already considered case $p=0$, that $\delta_{x^{(n)}}\otimes \PP^{(n)} \xrightarrow{\tau_0} \delta_x \otimes \PP$ for $n \rightarrow \infty$. It remains to show that the convergence also follows with respect to $\tau_1$. To conclude the convergence in $\tau_1$ it suffices, by \cite[Theorem 6.9]{villani2009optimal}, to show that 
\[
\lim_{n \rightarrow \infty} \int_{{ Z}^m} \|(y,z)\|\delta_{x^{(n)}}(\D y)\otimes \PP^{(n)}(\D z) = \int_{{ Z}^m} \|(y,z)\|\delta_{x}(\D y)\otimes \PP(\D z).
\]
To see this, note that
\begin{align*}
&\lim_{n \rightarrow \infty} \left|\int_{{ Z}^m} \|(y,z)\|\delta_{x^{(n)}}(\D y)\otimes \PP^{(n)}(\D z)-\int_{{ Z}^m} \|(y,z)\|\delta_{x}(\D y)\otimes \PP(\D z)\right| \\
=&\lim_{n \rightarrow \infty} \left|\int_{{ Z}} \|(x^{(n)},z)\|\PP^{(n)}(\D z)-\int_{{ Z}} \|(x,z)\|\PP(\D z)\right|\\
\leq &\lim_{n \rightarrow \infty} \left(\int_{{ Z}} \left|\|(x^{(n)},z)\|-\|(x,z)\|\right|\PP^{(n)}(\D z)+\left|\int_{{ Z}} \|(x,z)\|\PP^{(n)}(\D z)-\int_{{ Z}} \|(x,z)\|\PP(\D z)\right|\right)\\
\leq &\lim_{n \rightarrow \infty} \left(\|x^{(n)}-x\|+\left|\int_{{ Z}} \|(x,z)\|\PP^{(n)}(\D z)-\int_{{ Z}} \|(x,z)\|\PP(\D z)\right|\right)=0,
\end{align*}
where we use that ${ Z} \ni y \mapsto \|(x,y)\| \in C_1({ Z},\R)$ and $\PP^{(n)} \xrightarrow{\tau_1} \PP$ for $n \rightarrow \infty$.
\end{proof}

\begin{proof}[Proof of Proposition~\ref{prop_auto_correlation}]
Let $(x,a) \in \Omloc \times A$.\\
It is immediate that $\mathcal{P}(x,a) \neq \emptyset$, since $\widetilde{\mathcal{P}}(x,a)\neq \emptyset$ by assumption.

To show the compactness of $\mathcal{P}(x,a)$ we consider a sequence $\left(\PP^{(n)}\right)_{n \in \N} \subseteq \mathcal{P}(x,a)$, where for all $n \in \N$ we have $\PP^{(n)}=\delta_{\pi(x)} \otimes \widetilde{\PP}^{(n)}$ for some $\widetilde{\PP}^{(n)} \in \widetilde{\mathcal{P}}(x,a)$. Then, by the compactness of $\widetilde{\mathcal{P}}(x,a)$ there exists a subsequence $\left(\widetilde{\PP}^{(n_k)}\right)_{k \in \N}$ such that $\widetilde{\PP}^{(n_k)} \rightarrow \widetilde{\PP}\in \widetilde{\mathcal{P}}(x,a)$ in $\tau_p$ as $k \rightarrow \infty$ . Now, let $g \in C_p(\Omloc, \R)$. Then the map ${ Z} \ni y \mapsto g(\pi(x),y)$ is contained in $C_p({ Z},\R)$, and hence
\begin{align*}
\lim_{k \rightarrow \infty}\int_{\Omloc} g(z) \PP^{(n_k)}(\D z)&= \lim_{k \rightarrow \infty}\int_{{ Z}} g(\pi(x),y)\widetilde{\PP}^{(n_k)}(\D y)\\\
&= \int_{{ Z}} g(\pi(x),y)\widetilde{\PP}(\D y) =\int_{\Omloc} g(z) \PP(\D z)
\end{align*}
for $\PP:= \delta_{\pi(x)} \otimes \widetilde{\PP} \in {\mathcal{P}}(x,a)$, which proves the compactness of  $\mathcal{P}(x,a)$.

To show the upper hemicontinuity of $\mathcal{P}$, let $(x^{(n)},a^{(n)}) \subseteq \Omloc \times A$ with $(x^{(n)},a^{(n)}) \rightarrow (x,a)$ for $n \rightarrow \infty$, and let $\PP^{(n)} \in \mathcal{P}(x^{(n)},a^{(n)})$ for all $n \in \N$. Then, we have a representation $\PP^{(n)}=\delta_{\pi(x^{(n)})}\otimes \widetilde{\PP}^{(n)}$ with $\widetilde{\PP}^{(n)} \in \widetilde{\mathcal{P}}(x^{(n)},a^{(n)})$ for all $n \in \N$. By the upper hemicontinuity of $\widetilde{\mathcal{P}}$, there exists, according to Lemma~\ref{lem_upper_hemi}, a subsequence $\left(\widetilde{\PP}^{(n_k)}\right)_{k \in \N}$ with $\widetilde{\PP}^{(n_k)} \rightarrow \widetilde{\PP} \in \widetilde{\mathcal{P}}(x,a)$ in $\tau_p$ as $k \rightarrow \infty$. Moreover $ \delta_{\pi\left(x^{(n)}\right)} \rightarrow \delta_{\pi\left(x\right)}$ in $\tau_1$ as $n \rightarrow \infty$.

We apply Lemma~\ref{lem_continuity_dirac_plus_measure} and obtain that
 $\delta_{\pi\left(x^{(n_k)}\right)}\otimes \widetilde{\PP}^{(n_k)} \rightarrow \delta_{\pi(x)} \otimes \widetilde{\PP} \in \mathcal{P}(x,a)$, and hence the upper hemicontinuity follows with Lemma~\ref{lem_upper_hemi}.

To prove the lower hemicontinuity of $\mathcal{P}$ we consider again a sequence $(x^{(n)},a^{(n)}) \subseteq \Omloc \times A$ with $(x^{(n)},a^{(n)}) \rightarrow (x,a)$ for $n \rightarrow \infty$, and some $\PP \in \mathcal{P}(x,a)$ with a representation $\PP= \delta_{\pi(x)} \otimes \widetilde{\PP}$ for $\widetilde{\PP} \in \widetilde{\mathcal{P}}(x,a)$. By the lower hemicontinuity of $\widetilde{\mathcal{P}}$ there exists, according to Lemma~\ref{lem_lower_hemi}, a subsequence $(x^{(n_k)},a^{(n_k)})_{k \in \N}$ and $\widetilde{\PP}^{(n_k)} \in \widetilde{\mathcal{P}}(x^{(n_k)},a^{(n_k)})$ for all $k \in \N$ such that $\widetilde{\PP}^{(n_k)} \rightarrow \widetilde{\PP}$ in $\tau_p$. Then, we set $\PP^{(n_k)}:= \delta_{\pi(x^{(n_k)})} \otimes \widetilde{\PP}^{(n_k)}  \in \mathcal{P}(x^{(n_k)},a^{(n_k)})$ for all $k \in \N$, and we conclude $\PP^{(n_k)} \rightarrow \PP$ in $\tau_p$ for $k \rightarrow \infty$ with Lemma~\ref{lem_continuity_dirac_plus_measure}. Hence, the lower hemicontinuity follows with Lemma~\ref{lem_lower_hemi}.
\end{proof}

\section{Proof of Results in Section~\ref{sec_portfolio_optimization}}
\label{sec_proofs_sec4}
\subsection{Proof of Results in Section~\ref{sec_portfolio_wasserstein}}

\begin{proof}[Proof of Proposition~\ref{prop_portfolio_wasserstein}]
Since Assumption~\ref{asu_p}~(ii) is automatically fulfilled for $p=0$, the fulfilment of Assumption~\ref{asu_p} follows from Proposition~\ref{prop_wasserstein} and Proposition~\ref{prop_auto_correlation}, once we have shown that $\Omloc \ni x \mapsto \widehat{\PP}(x) \in \mathcal{M}_1({ Z} )$ is continuous in $\tau_q$ and possesses finite $q$-th moments.

To show the { (sequential)} continuity of $\widehat{\PP}$, { let $X_t \in \Omloc$ and let } $(X_t^{(n)})_{n\in \N}\subseteq \Omloc$ { be a sequence} with $X_t^{(n)} \rightarrow X_t \in \Omloc$ for $n \rightarrow \infty$.
By construction $\Omloc \ni x \mapsto\pi_s(x)\in [0,1]$ is continuous for all $s=m,\dots,N-1$, which implies for all $g \in C_q({ Z} ,\R)$ that
 \begin{align*}
 \lim_{n \rightarrow \infty} \int_{{ Z} } g(y) \widehat{\PP}\left(X_t^{(n)}; \D y \right)&= \lim_{n \rightarrow \infty}  \sum_{s=m}^{N-1}\pi_s(X_t^{(n)})g(\mathscr{R}_{s+1})\\
 & = \sum_{s=m}^{N-1}\pi_s(X_t)g(\mathscr{R}_{s+1}) =\int_{{ Z} } g(y) \widehat{\PP}\left(X_t;\D y\right).
 \end{align*}
Moreover, the existence of the $q$-th moment follows by 
  \begin{align*}
 \int_{{ Z} } \|y\|^q\widehat{\PP}\left(X_t; \D y \right)&=   \sum_{s=m}^{N-1}\pi_s(X_t)\cdot \left\|\mathscr{R}_{s+1}\right\|^q< \infty.
 \end{align*}

Now, to verify Assumption~\ref{asu_2} note that $r$ is continuous and that the compactness of ${ Z} $ and of $A$ imply that $r$ is bounded, and thus Assumption~\ref{asu_2}~(i) and (iii) are fulfilled. Next, let $X_t,X_t'\in \Omloc$, $X_{t+1}=\left(\mathcal{R}_{t-m+2},\cdots,\mathcal{R}_{t+1}\right)\in \Omloc$, and let $a_t,a_t' \in A$.
Then, by the Cauchy--Schwarz inequality we see that 
\begin{align*}
\left|r(X_t,a_t,X_{t+1})-r(X_t',a_t',X_{t+1})\right| &= \left| \sum_{i=1}^D (a_t^i-{a'_t}^i) \mathcal{R}_{t+1}^i { - \lambda \cdot  \left(a_t^T \cdot \Sigma_{\mathcal{R}} \cdot a_t -a_t'^T \cdot \Sigma_{\mathcal{R}} \cdot a_t'\right)}\right| \\
&\leq \|\mathcal{R}_{t+1}\|\cdot  \|a_t-a_t'\| { + \lambda \cdot \left( \left|a_t^T \cdot \Sigma_{\mathcal{R}} \cdot (a_t-a_t') \right| + \left|(a_t-a_t')^T \cdot \Sigma_{\mathcal{R}} \cdot a_t'\right| \right)}\\
&\leq  \max_{z\in Z}\|z\| \cdot \|a_t-a_t'\| { + \lambda \cdot 2 \cdot \max_{b \in A} \|b\| \cdot  \|\Sigma_{\mathcal{R}}\|_F \cdot \|a_t-a_t'\|}\\   
& { = \left(\max_{z\in Z}\|z\|+ \lambda \cdot 2 \cdot \max_{b \in A} \|b\| \cdot  \|\Sigma_{\mathcal{R}}\|_F \right) \cdot \|a_t-a_t'\| ,}
\end{align*}
{ where 
$\|\cdot \|_F$ denotes the Frobenius-norm.} Hence, Assumption~\ref{asu_2}~(ii) is fulfilled.
\end{proof}

\subsection{Proof of Results in Section~\ref{sec_portfolio_knightian}}
\begin{proof}[Proof of Proposition~\ref{prop_parametric}]
To show Assumption~\ref{asu_p}~(ii), let 
\begin{equation}
C_P:=1+\sqrt{\varepsilon^2+\frac{1}{m}+4 \cdot \frac{\varepsilon^2+1}{m-1} } 
\end{equation}
Now we consider some $(x,a) \in \Omloc \times A$ and some $\PP \in \mathcal{P}(x,a)$. Then, we have a representation of the form $\PP=\delta_{\pi(x)} \otimes \widetilde{\PP}$ for some $\widetilde{\PP} \in \widetilde{\mathcal{P}}(x)$, where $\widetilde{\PP} \sim \mathcal{N}_D(\mu, \Sigma)$ with $(\mu, \Sigma) \in \R^D \times \R^{D \times D}$ fulfilling $\|\mu - \mathfrak{m}(x) || \leq \varepsilon$ and $\Sigma = \mathfrak{c}(y)$ for some $y \in \Omloc$ with $\|y- x\| \leq \varepsilon$.
Therefore, we have with Jensen's inequality that 
\begin{equation}\label{eq_proof_p_asu_portfolio_1}
\begin{aligned}
\int_{\Omloc} 1+ \|y\| \PP(\D y)&= 1+\int_{\R^D} \left\|\left(\pi(x),z\right) \right\| \widetilde{\PP}(\D z)\\
 &\leq 1+\int_{\R^D} \left\|\pi(x)\right\| + \left\|z \right\| \widetilde{\PP}(\D z)\\
  &\leq 1+\|x\| + \sqrt{\int_{\R^D}\left\|z \right\|^2 \widetilde{\PP}(\D z)}.
\end{aligned}
\end{equation}
Moreover, for $Z = (Z_1,\dots,Z_D) \sim \mathcal{N}_D(\mu, \Sigma)$ we have
\begin{equation}\label{eq_proof_p_asu_portfolio_2}
\begin{aligned}
\int_{\R^D}\left\|z \right\|^2 \widetilde{\PP}(\D z) =\E\left[\left\|Z\right\|^2\right]&=\E\left[\sum_{i=1}^D Z_i^2\right]\\
&=\sum_{i=1}^D \E\left[ Z_i\right]^2+\sum_{i=1}^D \Var \left(Z_i\right)
\\
&= \left\|\mu\right\|^2 +\operatorname{trace} \left(\Sigma\right).
\end{aligned}
\end{equation}
In the next step, we write $x=(x_i^{(j)})_{i=1,\dots,m}^{j=1,\dots,D}$, use the Cauchy--Schwarz inequality, and compute 
\begin{equation}\label{eq_proof_p_asu_portfolio_3}
\begin{aligned}
\left\|\mu\right\|^2 &\leq \left(\|\mu- \mathfrak{m}(x)\|+\|\mathfrak{m}(x)\|\right)^2\\
&\leq \left(\varepsilon+\sqrt{\sum_{j=1}^D \left(\frac{1}{m}\sum_{i=1}^m x_i^{(j)} \right)^2}\right)^2\\
&\leq \left(\varepsilon+\sqrt{\sum_{j=1}^D \left(\frac{1}{m} \sum_{i=1}^m \left(x_i^{(j)}\right)^2 \right)}\right)^2\\
&= \left(\varepsilon+\frac{1}{\sqrt{m}}\|x\|\right)^2 \leq \left(\varepsilon^2+\frac{1}{m}\right)\left(1+\|x\|^2\right).
\end{aligned}
\end{equation}
Further, we write $y=(y_i^{(j)})_{i=1,\dots,m}^{j=1,\dots,D}$, and obtain with the Cauchy--Schwarz inequality that
\begin{equation}\label{eq_proof_p_asu_portfolio_4}
\begin{aligned}
\|\mathfrak{m}(y)-\mathfrak{m}(x)\|^2 &= \frac{1}{m^2}\sum_{j=1}^D \left(\sum_{i=1}^m \left(y_i^{(j)}-x_i^{(j)}\right)\right)^2\\
&\leq \frac{1}{m} \sum_{j=1}^D \sum_{i=1}^m \left(y_i^{(j)}-x_i^{(j)}\right)^2=\frac{\|y-x\|^2}{m} \leq \frac{\varepsilon^2}{m}.
\end{aligned}
\end{equation}
The above inequality \eqref{eq_proof_p_asu_portfolio_4}, and $\|\mathfrak{m}(x)\| \leq \tfrac{1}{\sqrt{m}}\|x\|$ (see also \eqref{eq_proof_p_asu_portfolio_3}) imply together with the Cauchy--Schwarz inequality that
\begin{equation}\label{eq_proof_p_asu_portfolio_5}
\begin{aligned}
\operatorname{trace}(\Sigma)&= \frac{1}{m-1}\sum_{i=1}^m \operatorname{trace} \left((y_i-\mathfrak{m}(y))(y_i-\mathfrak{m}(y))^{T}\right)\\
&= \frac{1}{m-1}\sum_{i=1}^m \sum_{j=1}^D \left(y_i^{(j)}-\mathfrak{m}(y)^{(j)}\right)^2\\
&\leq  \frac{2}{m-1}\sum_{i=1}^m \sum_{j=1}^D \left[\left(y_i^{(j)}\right)^2+\left(\mathfrak{m}(y)^{(j)}\right)^2 \right]\\
&=  \frac{2}{m-1}\|y\|^2+\frac{2m}{m-1}\|\mathfrak{m}(y)\|^2\\
&\leq  \frac{2}{m-1}\left(\|y-x\|+\|x\|\right)^2+\frac{2m}{m-1}\left(\|\mathfrak{m}(y)-\mathfrak{m}(x)\|+\|\mathfrak{m}(x)\|\right)^2\\
&\leq  \frac{2}{m-1}\left(\varepsilon+\|x\|\right)^2+\frac{2m}{m-1}\left(\frac{\varepsilon}{\sqrt{m}}+\frac{1}{\sqrt{m}}\|x\|\right)^2\\
&\leq  \frac{2}{m-1}\left(\varepsilon^2+1\right)\left(1+\|x\|^2\right)+\frac{2m}{m-1}\cdot \frac{\varepsilon^2+1}{{m}} \cdot  (1+\|x\|^2) \\
&= 4\cdot \frac{\varepsilon^2+1}{m-1} \cdot \left(1+\|x\|^2\right).
\end{aligned}
\end{equation}
Hence, by combining \eqref{eq_proof_p_asu_portfolio_1}, \eqref{eq_proof_p_asu_portfolio_2}, \eqref{eq_proof_p_asu_portfolio_3}, and \eqref{eq_proof_p_asu_portfolio_5} we have
\begin{equation*}
\begin{aligned}
\int_{\Omloc} 1+ \|y\| \PP(\D y) &\leq 1+\|x\|+\sqrt{\left(\varepsilon^2+\frac{1}{m}\right)\left(1+\|x\|^2\right)+4\cdot \frac{\varepsilon^2+1}{m-1} \cdot \left(1+\|x\|^2\right)}\\
&\leq \left(1+\sqrt{\varepsilon^2+\frac{1}{m}+4 \cdot \frac{\varepsilon^2+1}{m-1} } \right)\cdot \left(1+\|x\|\right)\\
&=C_P\cdot \left(1+\|x\|\right),
\end{aligned}
\end{equation*}
as required in Assumption~\ref{asu_p}~(ii).

Since Assumption~\ref{asu_p}~(ii) is fulfilled, the fulfilment of Assumption~\ref{asu_p} follows now with an application of Proposition~\ref{prop_auto_correlation}. 
Thus, to verify the assumptions of Proposition~\ref{prop_auto_correlation}, we need to show that $\Omloc \ni x \mapsto \widetilde{\mathcal{P}}(x) \twoheadrightarrow \left(\mathcal{M}_1(\R^D),\tau_1 \right)$ is nonempty, compact-valued, and continuous. This, in turn follows from Proposition~\ref{prop_knightian_1} once we have shown that $\Omloc \ni x \twoheadrightarrow \Theta(x) \subseteq \R^D \times \R^{D \times D}$ is nonempty, compact-valued, and continuous and that 
\begin{equation}\label{eq_map_proof_param_example_1}
\begin{aligned}
\{(x,{\mu},{\Sigma}) ~|~x \in \Omloc, ~ ({\mu},{\Sigma}) \in \Theta(x) \} &\rightarrow (\mathcal{M}_1(\R^D), \tau_1)\\
(x,{\mu},{\Sigma}) &\mapsto \widehat{\PP}(x,{\mu},{\Sigma}):= \mathcal{N}_D( {\mu},{\Sigma}).
\end{aligned}
\end{equation}
is continuous.

To that end, let $x \in \Omloc$. \\
The non-emptiness of $\Theta(x)$ follows by definition. 

To show the compactness of $\Theta(x)$, let $(\mu^{(n)},\Sigma^{(n)})_{n \in \N} \subseteq \Theta(x)$. Then, we have $\|\mu^{(n)}- \mathfrak{m}(x)\| \leq \varepsilon$ for all $n \in \N$ as well as $\Sigma^{(n)}=\mathfrak{c}(y^{(n)})$ for some $y^{(n)} \in \Omloc$ with $\|y^{(n)} - x \| \leq \varepsilon$. Then, according to the Bolzano--Weierstrass theorem there exists a subsequence $(\mu^{(n_k)},y^{(n_k)})_{k \in \N} \subseteq \R^D \times \Omloc$ such that $y^{(n_k)} \rightarrow y \in \Omloc$ with $\|y-x\| \leq \varepsilon$, and $\mu^{(n_k)} \rightarrow \mu \in \R^D$ with $\|\mu- \mathfrak{m}(x)\| \leq \varepsilon$ for $k \rightarrow \infty$. Since $\mathfrak{c}$ is continuous we obtain that $(\mu^{(n_k)}, \Sigma^{(n_k)}) \rightarrow (\mu, \Sigma):= (\mu, \mathfrak{c}(y)) \in \Theta(x)$ for $k \rightarrow \infty$.

To show the upper hemicontinuity of $\Theta$, let $(x^{(n)})_{n \in \N} \subseteq \Omloc $ with $(x^{(n)}) \rightarrow x \in \Omloc $ for $n \rightarrow \infty$ as well as $(\mu^{(n)}, \Sigma^{(n)})_{n \in \N}$ with $\left(\mu^{(n)}, \Sigma^{(n)}\right) \in \Theta(x^{(n)})$ for all $n \in \N$. We have for all $n \in \N$ that $\|\mu^{(n)}- \mathfrak{m}(x^{(n)})\| \leq \varepsilon$ and that $\Sigma^{(n)}=\mathfrak{c}(y^{(n)})$ with $||y^{(n)}-x^{(n)}\| \leq \varepsilon$ for some $y^{(n)} \in \Omloc$. Therefore, since $ \left\|\mu^{(n)}- \mathfrak{m}(x) \right\| \leq \left\| \mu^{(n)}- \mathfrak{m}(x^{(n)})\right\|+\left\|\mathfrak{m}(x^{(n)}) -\mathfrak{m}(x)\right\|$, the continuity of $\mathfrak{m}$ ensures for every  $n$ large enough that $\|\mu^{(n)} - \mathfrak{m}(x)\| \leq 2\varepsilon$. Hence, there exists according to the Bolzano--Weierstrass theorem  a subsequence $(\mu^{(n_k)})_{k \in \N}$ with $\mu^{(n_k)} \rightarrow \mu$ for $k \rightarrow \infty$  for some $\mu \in \R^D$. Therefore, since $\mathfrak{m}$ is continuous, we obtain 
\begin{equation}\label{eq_mu-m(x)_1}
\|\mu- \mathfrak{m}(x)\| = \lim_{k \rightarrow \infty} \left\|\mu^{(n_k)}-\mathfrak{m}\left(x^{(n_k)}\right)\right\| \leq \varepsilon.
\end{equation}
Analogously, we have that $\|y^{(n)}-x\|\leq ||y^{(n)}-x^{(n)}\|+\|x^{(n)}-x\|< 2 \varepsilon$ for every $n$ large enough. This implies the existence of a subsequence $(y^{(n_k)})_{k \in \N}$ converging against some $y \in \Omloc$ with $\|y-x\| =\lim_{k \rightarrow \infty} ||y^{(n_k)}-x^{(n_k)}\| \leq \varepsilon$. Then, for $\Sigma:=\mathfrak{c}(y)$ we have $(\mu, \Sigma) \in \Theta(x)$ and $(\mu^{(n_k)},\Sigma^{(n_k)}) \rightarrow (\mu, \Sigma)$ for $k \rightarrow \infty$. Thus, the upper hemicontinuity follows with Lemma~\ref{lem_upper_hemi}.

To show the lower hemicontinuity of $\Theta$ we consider a sequence  $(x^{(n)})_{n \in \N} \subseteq \Omloc$ with $x^{(n)} \rightarrow x \in \Omloc$ for $n \rightarrow \infty$ and some $(\mu, \Sigma) \in \Theta(x)$. We have by definition $\|\mu-\mathfrak{m}(x)|| \leq \varepsilon$ as well as $\Sigma = \mathfrak{c}(y)$ for some $y \in \R^D$ with $\|y-x\|\leq \varepsilon$. We define for every $n \in \N$
\[
\mu^{(n)}:= \left(1-\frac{1}{n}\right)\mu+\frac{1}{n}\mathfrak{m}\big(x^{(n)}\big).
\]
Then, due to the convergence $\mathfrak{m}(x^{(n)}) \rightarrow \mathfrak{m}(x)$, there exists  a subsequence $(x^{(n_k)})_{k \in \N}$ such that for every $k \in \N$ we have $\left\|\mathfrak{m}(x^{(n_k)}) - \mathfrak{m}(x) \right\| < \varepsilon/(n_k-1)$. This implies for all $k \in \N$ that
\begin{equation}\label{eq_convergence_mu_1}
\begin{aligned}
\|\mu^{(n_k)}-\mathfrak{m}(x^{(n_k)})\|&=\left(1-\tfrac{1}{n_k}\right) \left\|\mu - \mathfrak{m}(x^{(n_k)}) \right\|\\
&\leq \left(1-\tfrac{1}{n_k}\right) \bigg(\left\|\mu - \mathfrak{m}(x) \right\| +\left\|\mathfrak{m}(x) - \mathfrak{m}(x^{(n_k)}) \right\|  \bigg)\leq \left(1-\tfrac{1}{n_k}\right)\left(\varepsilon +\frac{\varepsilon}{n_k-1}\right) =  \varepsilon.
\end{aligned}
\end{equation}
Next, we define for all $n \in \N$
\begin{equation}\label{eq_proof_parameter_defn_Sigma_n}
y^{(n)}:= \left(1-\frac{1}{n}\right)y+\frac{1}{n}x^{(n)},\qquad \Sigma^{(n)}:= \mathfrak{c}\left(y^{(n)}\right).
\end{equation}
We obtain by the convergence $x^{(n)}\rightarrow x$ for $n \rightarrow \infty$ the existence of a subsequence $x^{(n_{k_l})}$ such that $\|x^{(n_{k_l})}-x\|< \varepsilon/(n_{k_l}-1)$ for all $l \in \N$.
This implies
\begin{equation}\label{eq_convergence_y_1}
\begin{aligned}
\left\|y^{({n_k}_l)}-x^{({n_k}_l)}\right\|&=\left(1-\frac{1}{{n_k}_l}\right)\cdot \left\|y-x^{({n_k}_l)}\right\| \\
&\leq \left(1-\frac{1}{{n_k}_l}\right)\cdot \left(\left\|y-x\right\|+\left\|x-x^{({n_k}_l)}\right\|\right)\\
&\leq \left(1-\tfrac{1}{n_{k_l}}\right)\left(\varepsilon +\frac{\varepsilon}{n_{k_l}-1}\right) =\varepsilon.
\end{aligned}
\end{equation}
Hence, with \eqref{eq_convergence_mu_1}, \eqref{eq_proof_parameter_defn_Sigma_n}, and \eqref{eq_convergence_y_1}, we have shown the existence of a subsequence $\left(\mu^{({n_k}_l)},\Sigma^{({n_k}_l)}\right)_{l \in \N}$ with $\left(\mu^{({n_k}_l)},\Sigma^{({n_k}_l)}\right) \in \Theta\left(x^{({n_k}_l)}\right)$ for all $l \in \N$ and such that, by the continuity of $r$, $\left(\mu^{({n_k}_l)},\Sigma^{({n_k}_l)}\right) \rightarrow (\mu, \Sigma)$ for $l \rightarrow \infty$. This implies the lower hemicontinuity of $\Theta$ by Lemma~\ref{lem_lower_hemi}.

It remains to show that the map defined in \eqref{eq_map_proof_param_example_1} is continuous with respect to $\tau_1$. 

To that end, consider a sequence $(x^{(n)})_{n \in \N} \subseteq \Omloc $ as well as a sequence $(\mu^{(n)}, \Sigma^{(n)})_{n \in \N}$ with $(\mu^{(n)}, \Sigma^{(n)}) \in \Theta(x^{(n)})$ for all $n \in \N$ and such that 
$\left(x^{(n)},\mu^{(n)},\Sigma^{(n)}\right) \rightarrow (x, \mu, \Sigma ) \in \Omloc \times \Theta(x)$ for $n \rightarrow \infty$. Then we write $\PP^{(n)}:= \mathcal{N}_D(\mu^{(n)}, \Sigma^{(n)}) \in \mathcal{M}_1(\R^D)$ for $n \in \N$ as well as $\PP:= \mathcal{N}_D(\mu, \Sigma) \in \mathcal{M}_1(\R^D)$. The characteristic function of $\PP^{(n)}$, denoted by 
\[
\R^D \ni u \mapsto \varphi_{\PP^{(n)}}(u):= \exp\left(i u^T \mu^{(n)}- \tfrac{1}{2} u^T \Sigma^{(n)} u\right)
\]
converges for $n \rightarrow \infty $ pointwise against 
\[
\R^D \ni u \mapsto \varphi_{\PP}(u):= \exp\left(i u^T \mu- \tfrac{1}{2} u^T \Sigma u\right),
\]
which is the characteristic function of $\PP$, and hence by Lévy's continuity theorem (see, e.g., \cite[Theorem 19.1]{jacod_protter}) we have $\PP^{(n)} \rightarrow \PP$ weakly, i.e., in $\tau_0$ for $n \rightarrow \infty$.

The convergence of $\PP^{(n)} \rightarrow \PP$ with respect to $\tau_1$ now follows with, e.g.,  \cite[Example 3.8.15]{bogachev1998gaussian}, since $(\PP^{(n)})_{n \in \N}$, and $\PP$ are Gaussian.

To verify Assumption~\ref{asu_2} first note that $r$ is continuous, and hence  Assumption~\ref{asu_2}~(i) is fulfilled.
Let $X_t,X_t'\in \Omloc$, $X_{t+1}=\left(\mathcal{R}_{t-m+2},\cdots,\mathcal{R}_{t+1}\right)\in \Omloc$, and let $a_t,a_t' \in A$.
Then, the Cauchy--Schwarz inequality implies
\begin{align*}
\left|r(X_t,a_t,X_{t+1})-r(X_t',a_t',X_{t+1})\right| &= \left| \sum_{i=1}^D (a_t^i-{a'_t}^i) \mathcal{R}_{t+1}^i { - \lambda \cdot  \left(a_t^T \cdot \Sigma_{\mathcal{R}} \cdot a_t -a_t'^T \cdot \Sigma_{\mathcal{R}} \cdot a_t'\right)}\right| \\
&\leq \|\mathcal{R}_{t+1}\|\cdot  \|a_t-a_t'\| { + \lambda \cdot \left( \left|a_t^T \cdot \Sigma_{\mathcal{R}} \cdot (a_t-a_t') \right| + \left|(a_t-a_t')^T \cdot \Sigma_{\mathcal{R}} \cdot a_t'\right| \right)}\\
&\leq  \|X_{t+1}\| \cdot \|a_t-a_t'\| { + \lambda \cdot 2 \cdot \max_{b \in A} \|b\| \cdot  \|\Sigma_{\mathcal{R}}\|_F \cdot \|a_t-a_t'\|}\\   &= { \left(\|X_{t+1}\|+ \lambda \cdot 2 \cdot \max_{b \in A} \|b\| \cdot  \|\Sigma_{\mathcal{R}}\|_F \right) \cdot \|a_t-a_t'\|,}
\end{align*}
{ where 
$\|\cdot \|_F$ denotes the Frobenius-norm. This}
implies Assumption~\ref{asu_2}~(ii).
Moreover, we have by using the Cauchy--Schwarz inequality that
\begin{align*}
\left|r(X_t,a_t,X_{t+1})\right| &= \left| \sum_{i=1}^D a_t^i \mathcal{R}_{t+1}^i { - \lambda \cdot  \left(a_t^T \cdot \Sigma_{\mathcal{R}} \cdot a_t \right)} \right| \\
&\leq \|a_t\|   \cdot\|\mathcal{R}_{t+1}\| { + \lambda \|a_t\|^2 \|\Sigma_{\mathcal{R}}\|_F} \\
&{ \leq \max_{b \in A} \|b\|   \cdot\|X_{t+1}\| { + \lambda \max_{b \in A} \|b\|^2 \|\Sigma_{\mathcal{R}}\|_F}}\\
&{ \leq \left(\max_{b \in A} \|b\|    + \lambda \max_{b \in A} \|b\|^2 \|\Sigma_{\mathcal{R}}\|_F \right)\cdot \left(1+ \|X_{t+1}\|\right),}
\end{align*}
as required in Assumption ~\ref{asu_2}~(iii).
\end{proof}

\section*{Acknowledgments}
\noindent
Financial support by the MOE AcRF Tier 1 Grant \emph{RG74/21} and by the  Nanyang Assistant Professorship Grant (NAP Grant) \emph{Machine Learning based Algorithms in Finance and Insurance} is gratefully acknowledged. 

\appendix

\section{Supplementary Results}\label{appendix_supplementary}
The first auxiliary result is Banach's { fixed} point theorem, compare, e.g., \cite[Theorem A 3.5.]{bauerle2011markov}, or any standard monograph on analysis or functional analysis.
\begin{thm}[Banach's { Fixed} Point Theorem]\label{thm_banach}
Let $M$ be a complete metric space with metric $d(x,y)$ and let $\T :M\rightarrow M$ be an operator such that
there exists a number $\beta\in (0,1)$ such that $d(\T v, \T w) \leq \beta d(v,w)$ for all $v,w \in M$. Then, we have that
\begin{itemize}
\item[(i)] $\T $ has a unique { fixed} point $v^*$ in $M$, i.e., $\T v^*=v^*$.
\item[(ii)] $\lim_{n \rightarrow \infty} \T ^nv = v^*$ for all $v \in M$.
\item[(iii)] For $v \in M$ we obtain 
\[
d(v^*,\T ^nv)\leq \frac{\beta^n}{1-\beta}d(\T v,v).
\]
\end{itemize}
\end{thm}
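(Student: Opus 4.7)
The plan is the classical Picard iteration argument. I would start by fixing an arbitrary $v_0 \in M$ and defining the sequence $v_n := \T^n v_0$ for $n \in \N$. Using the contraction property iteratively, I would first establish by induction that $d(v_{n+1}, v_n) \leq \beta^n \, d(v_1, v_0)$ for all $n \in \N_0$. Then, for any $n,k \in \N$, the triangle inequality together with the geometric series bound gives
\[
d(v_{n+k}, v_n) \leq \sum_{j=0}^{k-1} d(v_{n+j+1}, v_{n+j}) \leq \sum_{j=0}^{k-1} \beta^{n+j} d(v_1,v_0) \leq \frac{\beta^n}{1-\beta}\, d(v_1, v_0).
\]
Since $\beta \in (0,1)$, this shows $(v_n)_{n \in \N}$ is a Cauchy sequence in $M$, and by completeness it converges to some limit $v^* \in M$.

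Next, I would verify that $v^*$ is a fixed point of $\T$. The contraction inequality directly implies that $\T$ is Lipschitz continuous and hence (sequentially) continuous on $M$. Therefore
\[
\T v^* = \T \Big(\lim_{n\rightarrow \infty} v_n \Big) = \lim_{n \rightarrow \infty} \T v_n = \lim_{n \rightarrow \infty} v_{n+1} = v^*.
\]
Uniqueness follows from the contraction property: if $w^* \in M$ also satisfies $\T w^* = w^*$, then
\[
d(v^*, w^*) = d(\T v^*, \T w^*) \leq \beta \, d(v^*, w^*),
\]
and since $\beta < 1$, this forces $d(v^*, w^*) = 0$, i.e., $v^* = w^*$. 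This establishes (i).

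For (ii), the argument above does not depend on the specific choice of starting point: replacing $v_0$ by any $v \in M$ and repeating the Cauchy/completeness argument yields a limit which must, by uniqueness of the fixed point, equal $v^*$; hence $\lim_{n \rightarrow \infty} \T^n v = v^*$ for every $v \in M$. For (iii), I would apply the telescoping bound derived in the first paragraph with $v_0 := v$, giving $d(\T^{n+k} v, \T^n v) \leq \tfrac{\beta^n}{1-\beta} d(\T v, v)$ for all $k \in \N$; then letting $k \to \infty$ and using continuity of $d(\cdot, \T^n v)$ together with (ii) yields
\[
d(v^*, \T^n v) \leq \frac{\beta^n}{1-\beta}\, d(\T v, v).
\]
There is no real obstacle here: the entire argument is standard and the only subtlety worth flagging is the passage to the limit in (iii), which requires continuity of the metric in its first argument (immediate from the triangle inequality).
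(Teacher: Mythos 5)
Your proof is the classical Picard iteration argument and is correct in every step, including the passage to the limit in (iii). The paper does not prove this statement at all---it is stated as a known auxiliary result in the appendix with a citation to standard references---so there is nothing to compare against; your argument is precisely the canonical one those references contain.
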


The following result, Berge's Maximum Theorem, can for example be found in \cite[Theorem 17.31]{Aliprantis}.
\begin{thm}[Berge's Maximum Theorem]\label{thm_berge}
Let $\varphi:X \twoheadrightarrow Y$ be a upper and lower hemicontinuous correspondence between topological spaces with nonempty compact values, and suppose that $f: \left\{(x,y) \in X \times Y ~\middle|~y \in \varphi(x)\right\} \rightarrow \R$ is continuous. Then the following holds.
\begin{itemize}
\item[(i)]
The function 
\begin{align*}
m: X &\rightarrow \R\\
x&\mapsto\max_{y \in \varphi(x)}f(x,y)
\end{align*}
is continuous.
\item[(ii)]
The correspondence
\begin{align*}
c: X &\twoheadrightarrow Y \\
x &\mapsto \left\{y \in \varphi(x)~\middle|~f(x,y)=m(x)\right\}
\end{align*}
has nonempty, compact values.
\item[(iii)]
If $Y$ is Hausdorff, then $c$ is upper hemicontinuous.
\end{itemize}
\end{thm}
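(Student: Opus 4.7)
The plan is to verify the three assertions in sequence, handling the well-definedness and part (ii) first, then splitting the continuity of $m$ in (i) into upper and lower semicontinuity (each piece using exactly one of the two hemicontinuity hypotheses on $\varphi$), and finally deriving (iii) from a closed-graph argument combined with upper hemicontinuity of $\varphi$.

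First I would observe that for every $x\in X$ the set $\varphi(x)$ is nonempty and compact and the restriction $f(x,\cdot)\colon \varphi(x)\to\R$ is continuous, so that the supremum is attained; this simultaneously shows $m(x)\in\R$ is well-defined and that $c(x)\neq\emptyset$. Compactness of $c(x)$ follows because $c(x)=\{y\in\varphi(x)\mid f(x,y)=m(x)\}$ is the preimage of a singleton under a continuous map, hence closed in the compact set $\varphi(x)$. This settles (ii).

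For (i) I would prove upper and lower semicontinuity of $m$ separately. For upper semicontinuity at $x_0$, fix $\varepsilon>0$. For each $y\in\varphi(x_0)$, continuity of $f$ on $\operatorname{Gr}\varphi$ yields open neighborhoods $U_y\ni x_0$ in $X$ and $V_y\ni y$ in $Y$ such that $f(x,y')<m(x_0)+\varepsilon$ on $(U_y\times V_y)\cap\operatorname{Gr}\varphi$. By compactness of $\varphi(x_0)$ extract a finite subcover $V_{y_1},\dots,V_{y_n}$, set $V=\bigcup V_{y_i}$ and $U=\bigcap U_{y_i}$; then upper hemicontinuity of $\varphi$ supplies a neighborhood $W\ni x_0$ with $\varphi(x)\subseteq V$ for all $x\in W$, and on $U\cap W$ one gets $m(x)\le m(x_0)+\varepsilon$. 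For lower semicontinuity, pick a maximizer $y_0\in c(x_0)$, use continuity of $f$ to find a neighborhood $U\times V$ of $(x_0,y_0)$ on which $f>m(x_0)-\varepsilon$, and invoke lower hemicontinuity of $\varphi$ (applied to the open set $V$, which meets $\varphi(x_0)$ at $y_0$) to obtain a neighborhood $W\ni x_0$ with $\varphi(x)\cap V\neq\emptyset$; any selection $y\in\varphi(x)\cap V$ for $x\in U\cap W$ then gives $m(x)\ge f(x,y)>m(x_0)-\varepsilon$.

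For (iii), with $Y$ Hausdorff, I would first establish that the correspondence $c$ has closed graph in $X\times Y$: if a net $(x_\alpha,y_\alpha)\to (x_0,y_0)$ satisfies $y_\alpha\in c(x_\alpha)$, then closedness of the graph of $\varphi$ (which is a standard consequence of upper hemicontinuity into a Hausdorff space with compact — hence closed — values, cf.\ Lemma~\ref{lem_upper_hemi}) gives $y_0\in\varphi(x_0)$, while $f(x_\alpha,y_\alpha)=m(x_\alpha)\to m(x_0)$ by (i) and $f(x_\alpha,y_\alpha)\to f(x_0,y_0)$ by joint continuity of $f$, so that $y_0\in c(x_0)$. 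Since $c\subseteq\varphi$ with $\varphi$ upper hemicontinuous and compact-valued, upper hemicontinuity of $c$ then follows from the standard fact that a closed-graph subcorrespondence of an upper hemicontinuous compact-valued correspondence is itself upper hemicontinuous.

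The main obstacle I anticipate is the topological generality: working in arbitrary topological spaces (not necessarily metric or even first countable), I must argue with open covers and nets rather than sequences, and the closed-graph-implies-upper-hemicontinuous step in (iii) genuinely requires both the Hausdorff hypothesis on $Y$ and the compact-valuedness of the enveloping correspondence $\varphi$; the remaining steps are routine once the finite subcover/neighborhood construction in the upper-semicontinuity argument is set up carefully.
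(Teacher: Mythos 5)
The paper offers no proof of this statement: Theorem~\ref{thm_berge} is quoted in the appendix as a known auxiliary result with a pointer to \cite[Theorem 17.31]{Aliprantis}, so there is no in-paper argument to compare yours against line by line. Your proof is essentially the standard textbook argument and is correct. Parts (i) and (ii) are handled exactly as in the classical proof: attainment and closedness of the argmax set give (ii); the finite-subcover construction combined with upper hemicontinuity of $\varphi$ gives upper semicontinuity of $m$; a single maximizer combined with lower hemicontinuity gives lower semicontinuity; and you correctly keep track of the fact that $f$ is only continuous on $\operatorname{Gr}\varphi$ in the subspace topology, working with sets of the form $(U\times V)\cap\operatorname{Gr}\varphi$. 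For (iii), your route (closed graph of $c$ deduced from the closed graph of $\varphi$ together with continuity of $m$ and of $f$, then the fact that a closed-graph subcorrespondence of an upper hemicontinuous compact-valued correspondence is itself upper hemicontinuous) is the standard one. Two small caveats: the citation of Lemma~\ref{lem_upper_hemi} for the closed-graph property of $\varphi$ is not quite the right reference, since that lemma is the sequential characterization valid only for first countable domain and metrizable range, whereas in the stated generality you need either the net characterization of upper hemicontinuity or the direct separation argument (a point of $Y$ outside the compact set $\varphi(x_0)$ can be separated from it by disjoint open sets because $Y$ is Hausdorff, and upper hemicontinuity then yields a contradiction for a convergent net in the graph); and the closing ``standard fact'' about closed subcorrespondences of upper hemicontinuous compact-valued correspondences, while true, itself rests on that same net characterization, so a fully self-contained write-up would have to supply its proof. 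Neither point affects the correctness of the overall argument.
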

{ We also provide the assertion of the \emph{measurable maximum theorem}\footnote{{Note that every upper hemicontinuous correspondence is (weakly) measurable, see \cite[Lemma 17.4, Definition 18.1 and Lemma 18.2]{Aliprantis}. Moreover, if $S$ is a topological space and $\Sigma$ its Borel $\sigma$-field, then every continuous function $\Psi: S \times X \rightarrow Z$ is a Caratheodory function, see \cite[Definition 4.50]{Aliprantis}.}}, see, e.g. \cite[Theorem 18.19]{Aliprantis}.
\begin{thm}[Measurable Maximum Theorem]\label{thm_measurable_maximum}
Let $X$ be a separable metrizable space and $(S,\Sigma)$ be a measurable space. Let $\varphi:S \twoheadrightarrow X$ be a weakly measurable correspondence with nonempty compact values, and suppose $f:S\times X \rightarrow \R$ is a Caratheodory function. Define the value fucntion $m:S\rightarrow \R$ by
\begin{align*}
m:S &\rightarrow \R\\
s&\mapsto \max_{x\in \varphi(s)}f(s,x),
\end{align*}
and the correspondence of maximizers by
\begin{align*}
\mu: S &\twoheadrightarrow X\\
s&\mapsto \{x\in \varphi(s)~|~f(s,x)=m(s)\},
\end{align*}
Then the following holds.
\begin{itemize}
\item[(i)] The value function $m$ is measurable.
\item[(ii)] The $\operatorname{argmax}$ correspondence $\mu$ has nonempty and compact values.
\item[(iii)]
The $\operatorname{argmax}$ correspondence $\mu$ is measurable and admits a measurable selector.
\end{itemize}
\end{thm}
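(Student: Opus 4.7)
The plan is to prove the three claims in order, leveraging the Carath\'eodory structure of $f$ together with the Castaing representation of the weakly measurable correspondence $\varphi$.

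First I would dispatch (ii) pointwise. For any fixed $s \in S$, the function $f(s,\cdot)$ is continuous on the nonempty compact set $\varphi(s)$, so by the extreme value theorem it attains its supremum $m(s)$; hence $\mu(s)$ is nonempty. Moreover $\mu(s) = \varphi(s) \cap f(s,\cdot)^{-1}(\{m(s)\})$ is a closed subset of the compact set $\varphi(s)$, hence compact.

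Next, for (i) I would invoke the Castaing representation theorem for weakly measurable, closed-valued correspondences into a separable metrizable space: there exist measurable selectors $\xi_n : S \to X$ of $\varphi$ (so that $\xi_n(s) \in \varphi(s)$ for every $s$) such that $\varphi(s) = \overline{\{\xi_n(s) : n \in \N\}}$ for every $s \in S$. For each $n$, the map $s \mapsto f(s, \xi_n(s))$ is measurable, since the composition of a Carath\'eodory function with a measurable function is measurable. Combining the continuity of $f(s,\cdot)$ with the density of $\{\xi_n(s)\}_{n \in \N}$ in $\varphi(s)$ gives
\[
m(s) = \sup_{n \in \N} f(s, \xi_n(s)) \qquad \text{for all } s \in S,
\]
so $m$ is a pointwise countable supremum of measurable functions and therefore measurable.

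Finally, for (iii) I would first show that $\mu$ has a measurable graph and then extract a selector. Writing
\[
\operatorname{Gr} \mu = \operatorname{Gr} \varphi \cap \{(s,x) \in S \times X : f(s,x) - m(s) = 0\},
\]
the first factor lies in $\Sigma \otimes \mathcal{B}(X)$ (for a closed-valued weakly measurable correspondence into a separable metrizable space, the graph is jointly measurable), and the second factor is measurable because the Carath\'eodory function $(s,x) \mapsto f(s,x)$ is jointly measurable and $m$ was shown to be measurable in (i). Since $\mu$ is closed-valued by (ii), the equivalence between measurable graph and weak measurability for closed-valued correspondences delivers weak measurability of $\mu$. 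The Kuratowski--Ryll-Nardzewski selection theorem, applied to the weakly measurable nonempty closed-valued correspondence $\mu$ into a separable metrizable space, then produces a measurable selector $s \mapsto x^*(s) \in \mu(s)$.

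The main obstacle is step (iii): upgrading the raw graph-measurability of $\mu$ to weak measurability of $\mu$ so that Kuratowski--Ryll-Nardzewski applies. This is where the topological hypothesis on $X$ (separable metrizable) and the closed-valuedness of $\mu$ (a consequence of (ii)) are essential; once (i) is in hand, the equivalence between these two notions of measurability reduces (iii) to a direct application of a standard selection theorem.
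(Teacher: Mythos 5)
The paper does not prove this statement at all: Theorem~\ref{thm_measurable_maximum} is quoted verbatim in Appendix~\ref{appendix_supplementary} as a known result, with a citation to \cite[Theorem 18.19]{Aliprantis}, so there is no in-paper proof to compare against. Judged on its own, your argument for (ii) is correct, and your argument for (i) is the standard one and is sound: a weakly measurable correspondence with nonempty \emph{compact} values into a separable metrizable space does admit a Castaing representation (one embeds $X$ into the Hilbert cube, where the compact values remain closed, and applies Kuratowski--Ryll-Nardzewski), and then $m(s)=\sup_n f(s,\xi_n(s))$ is a countable supremum of measurable functions. One small caution: the Castaing representation for merely \emph{closed}-valued weakly measurable correspondences generally requires $X$ Polish, so compactness of the values is doing real work in your step (i); it is worth saying so.

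The genuine gap is in (iii). Your route is: $\operatorname{Gr}\mu$ is a measurable subset of $S\times X$, and then you invoke ``the equivalence between measurable graph and weak measurability for closed-valued correspondences'' to get weak measurability of $\mu$. That equivalence does not hold under the stated hypotheses. For a closed-valued correspondence into a separable metrizable space one only has the implication \emph{weakly measurable $\Rightarrow$ measurable graph}; the converse is a projection-type theorem (von Neumann--Aumann) that requires $(S,\Sigma)$ to be (the completion of) a $\sigma$-finite measure space and $X$ to be Polish or Suslin, because one must project a measurable subset of $S\times X$ onto $S$. Since the theorem is stated for an arbitrary measurable space $(S,\Sigma)$, this step fails and with it your access to Kuratowski--Ryll-Nardzewski. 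The standard repair establishes weak measurability of $\mu$ directly: write $\mu(s)=\bigcap_{n}\operatorname{cl}\bigl(\varphi_n(s)\bigr)$ with $\varphi_n(s):=\{x\in\varphi(s):f(s,x)>m(s)-\tfrac1n\}$, check via the Castaing selectors $\xi_k$ that each $\varphi_n$ is weakly measurable (for open $G$, $\varphi_n(s)\cap G\neq\emptyset$ iff $\xi_k(s)\in G$ and $f(s,\xi_k(s))>m(s)-\tfrac1n$ for some $k$, since $\varphi_n(s)\cap G$ is relatively open in $\varphi(s)$), and then use that weak measurability is preserved under closures and under decreasing countable intersections of compact-valued weakly measurable correspondences. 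After that, Kuratowski--Ryll-Nardzewski (again via the Hilbert-cube embedding) yields the measurable selector.
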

}
The following two lemmas provide characterizations of upper and lower hemicontinuity, respectively\footnote{{To illustrate the notions of lower and upper hemicontinuity we also refer to \cite[Example 17.3]{Aliprantis} where examples are provided for correspondences that are upper hemicontinuous but not lower hemicontinuous and vice versa.}}. The results can be found, e.g., in \cite[Theorem 17.20]{Aliprantis},  and \cite[Theorem 17.21]{Aliprantis}.
\begin{lem}[Upper Hemicontinuity]\label{lem_upper_hemi}
Assume that the topological space $X$ is first countable and that $Y$ is metrizable. Then, for a correspondence $\varphi:X \twoheadrightarrow  Y$  the following statements are equivalent.
\begin{itemize}
\item[(i)] The correspondence $\varphi$ is upper hemicontinuous and $\varphi(x)$ is compact for all $x\in X$.
\item[(ii)]
For any $x\in X$, if a sequence $\left((x^{(n)},y^{(n)})\right)_{n \in \N} \subseteq \operatorname{Gr}\varphi$ satisfies $x^{(n)} \rightarrow x$ for $n \rightarrow \infty$, then there exists a subsequence $\left(y^{(n_k)}\right)_{k \in \N}$ with $y^{(n_k)} \rightarrow y  \in \varphi(x)$ for $k \rightarrow \infty$.
\end{itemize}
\end{lem}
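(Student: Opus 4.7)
The statement is a standard sequential characterization of upper hemicontinuity for compact-valued correspondences, and I would prove the two implications separately. The first countability of $X$ is needed only in the direction (ii)$\Rightarrow$(i) (to pass from sequentially closed to closed sets), while the metrizability of $Y$ is used in the direction (i)$\Rightarrow$(ii) (to convert the topological definition of upper hemicontinuity, which talks about open supersets of $\varphi(x)$, into information about sequences through $\varepsilon$-thickenings of a compact set). Throughout, I will use the definition of upper hemicontinuity given in Definition~\ref{def_hemi}(i).

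For (i)$\Rightarrow$(ii), I would argue by contradiction. Assume a sequence $(x^{(n)},y^{(n)}) \in \operatorname{Gr}\varphi$ with $x^{(n)} \to x$ admits no subsequence of $(y^{(n)})$ converging to a point of $\varphi(x)$. The first key step is the auxiliary observation that in a metric space the $\varepsilon$-thickenings $\{y : d(y,K)<\varepsilon\}$ of a compact set $K$ form a neighborhood base of $K$; this is proved by noting $y \mapsto d(y,U^c)$ attains a strictly positive minimum on the compact set $K \subseteq U$. Using this, I would show that the failure above forces the existence of an open set $U \supseteq \varphi(x)$ and a subsequence $(y^{(n_k)})$ entirely outside $U$: otherwise one could extract, for each $k$, a point $y^{(n_k)}$ with $d(y^{(n_k)},\varphi(x))<1/k$, then use compactness of $\varphi(x)$ to pull a sub-subsequence converging to a point of $\varphi(x)$, a contradiction. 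Having located such a $U$, the upper hemicontinuity definition gives that $\{x' : \varphi(x') \subseteq U\}$ is open and contains $x$, so it must eventually contain $x^{(n_k)}$; but then $y^{(n_k)} \in \varphi(x^{(n_k)}) \subseteq U$, contradicting the choice of $U$.

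For (ii)$\Rightarrow$(i), I would first derive the compactness of $\varphi(x)$ by applying (ii) to the constant sequence $x^{(n)} \equiv x$: any sequence in $\varphi(x)$ then has a subsequence converging to a point of $\varphi(x)$, giving sequential compactness, which agrees with compactness since $Y$ is metrizable. To get upper hemicontinuity, I would fix an open $A \subseteq Y$ and show $\{x : \varphi(x) \not\subseteq A\}$ is closed by proving it is sequentially closed (using first countability of $X$ to equate the two). Given $x^{(n)} \to x$ with $\varphi(x^{(n)}) \not\subseteq A$, choose $y^{(n)} \in \varphi(x^{(n)}) \setminus A$; condition (ii) yields a subsequence $y^{(n_k)} \to y \in \varphi(x)$, and since $Y \setminus A$ is closed the limit $y$ lies in $\varphi(x) \setminus A$, hence $\varphi(x) \not\subseteq A$ as required.

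The only step that is not purely formal sequence-chasing is the metric-space fact that the $\varepsilon$-thickenings of a compact set form a neighborhood base; everything else is a careful bookkeeping of subsequences and an appeal to first countability at the very end. Because this fact is standard and short, I expect no substantive obstacle; the main care lies in correctly using the topological (not a priori sequential) definition of upper hemicontinuity in (i)$\Rightarrow$(ii) without silently assuming what needs to be shown.
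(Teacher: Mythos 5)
Your proof is correct. Note, however, that the paper does not prove this lemma at all: it is quoted verbatim as a known result with a pointer to \cite[Theorem 17.20]{Aliprantis}, so there is no in-paper argument to compare against. Your blind proof is a sound self-contained substitute. In the direction (i)$\Rightarrow$(ii) your contradiction scheme works: if no subsequence of $(y^{(n)})$ converges into $\varphi(x)$, then the ``otherwise'' branch (applied to the open $1/k$-thickenings of the compact set $\varphi(x)$, followed by a diagonal extraction using compactness) is correctly ruled out, so some open $U\supseteq\varphi(x)$ misses a full subsequence, and the openness of $\{x':\varphi(x')\subseteq U\}$ then yields the contradiction. The only cosmetic remark is that the neighborhood-base property of the $\varepsilon$-thickenings, which you flag as the one nontrivial auxiliary fact, is not actually needed for your argument --- you only use that the thickenings are \emph{some} open supersets of $\varphi(x)$, which is immediate from continuity of $y\mapsto d(y,\varphi(x))$. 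The textbook proof in Aliprantis--Border instead covers $\varphi(x)$ by finitely many neighborhoods each containing only finitely many $y^{(n)}$; your distance-function route is an equally valid, slightly more metric-flavored alternative. The direction (ii)$\Rightarrow$(i) is handled exactly as one should: sequential compactness of $\varphi(x)$ via the constant sequence plus metrizability of $Y$, and sequential closedness of $\{x:\varphi(x)\not\subseteq A\}$ plus first countability of $X$. You also correctly isolate which hypothesis ($X$ first countable versus $Y$ metrizable) is used in which implication.
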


\begin{lem}[Lower Hemicontinuity]\label{lem_lower_hemi}
For a correspondence $\varphi:X \twoheadrightarrow  Y$ between first countable topological spaces the following statements are equivalent.
\begin{itemize}
\item[(i)]
The correspondence $\varphi$ is lower hemicontinuous.
\item[(ii)]
For any $x\in X$, if $x^{(n)} \rightarrow x$ for $n \rightarrow \infty$, then for each $y \in \varphi(x)$ there exists a subsequence $\left(x^{(n_k)}\right)_{k \in \N}$ and elements $y^{(k)} \in \varphi\left(x^{(n_k)}\right)$ for each $k\in \N$ such that $y^{(k)} \rightarrow y$ for $k \rightarrow \infty$.
\end{itemize}
\end{lem}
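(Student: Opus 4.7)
The plan is to prove the two implications by exploiting the first countability on each side to translate the topological definition of lower hemicontinuity into sequential language, and vice versa.

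For the direction (i) $\Rightarrow$ (ii), I would fix $x \in X$, a sequence $x^{(n)} \to x$, and a point $y \in \varphi(x)$. Using first countability of $Y$, I would pick a countable neighborhood basis $(U_m)_{m \in \N}$ at $y$, which we may assume is nested: $U_1 \supseteq U_2 \supseteq \cdots$. Since $y \in \varphi(x) \cap U_m$ for every $m$, the set $O_m := \{x' \in X \mid \varphi(x') \cap U_m \neq \emptyset\}$ is open by lower hemicontinuity and contains $x$. Hence, for each $m$ there exists $N_m \in \N$ (which can be chosen strictly increasing in $m$) such that $x^{(n)} \in O_m$ for all $n \geq N_m$. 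I would set $n_k := N_k$ and pick any $y^{(k)} \in \varphi(x^{(n_k)}) \cap U_k$; the nesting then forces $y^{(k)} \in U_m$ for all $k \geq m$, which gives $y^{(k)} \to y$.

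For the direction (ii) $\Rightarrow$ (i), I would show that for any open $A \subseteq Y$, the complement $F := \{x \in X \mid \varphi(x) \cap A = \emptyset\}$ is closed. Since $X$ is first countable, closedness is equivalent to sequential closedness, so I would take a sequence $x^{(n)} \in F$ with $x^{(n)} \to x$ and argue by contradiction: if $\varphi(x) \cap A \neq \emptyset$, pick $y \in \varphi(x) \cap A$. By (ii) there exists a subsequence $x^{(n_k)}$ and $y^{(k)} \in \varphi(x^{(n_k)})$ with $y^{(k)} \to y$. Openness of $A$ forces $y^{(k)} \in A$ for all large $k$, contradicting $\varphi(x^{(n_k)}) \cap A = \emptyset$.

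The main obstacle is the direction (i) $\Rightarrow$ (ii): without first countability of $Y$, one cannot reduce the defining property of lower hemicontinuity (an open-set condition against all open $A \ni y$) to a countable diagonal argument producing the required sequence $y^{(k)} \to y$. All the remaining steps are routine bookkeeping with nested neighborhood bases and the characterization of closed sets in first countable spaces.
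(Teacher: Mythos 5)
Your proof is correct, and the argument is complete: in (i)$\Rightarrow$(ii) the nested countable basis at $y$ (which you should take to consist of \emph{open} neighborhoods so that lower hemicontinuity applies to each $U_m$) yields the required diagonal sequence, and in (ii)$\Rightarrow$(i) sequential closedness of the complement suffices because $X$ is first countable. The paper does not prove this lemma itself but only cites \cite[Theorem 17.21]{Aliprantis}, and your two-sided sequential argument is essentially the standard proof given there.
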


\bibliographystyle{plain} 
\bibliography{literature}
\end{document}